\title{\textbf{Mean equicontinuous factor maps} }
\author{Till Hauser\thanks{
{\small\faEnvelopeO}~hauser.math@mail.de\\
Funded by the Deutsche Forschungsgemeinschaft (DFG, German Research Foundation) – 530703788.}}
\let\epsilon=\varepsilon
\theoremstyle{definition}
\newtheorem{definition}{Definition}[section]
\newtheorem{theorem}[definition]{Theorem}
\newtheorem*{theorem*}{Theorem}
\newenvironment{theorem**}[1]
  {\innercustomthm}
  {\endinnercustomthm}
\newtheorem{proposition}[definition]{Proposition}
\newtheorem{lemma}[definition]{Lemma}
\newtheorem{corollary}[definition]{Corollary}
\theoremstyle{remark}
\newtheorem{remark}[definition]{Remark}
\newtheorem{example}[definition]{Example}
\newtheorem*{acknowledgement}{Acknowledgement}
\begin{document}
\maketitle

\begin{abstract}
Mean equicontinity is a well studied notion for actions. We propose a definition of mean equicontinuous factor maps that generalizes mean equicontinuity to the relative context. For this we work in the context of countable amenable groups. We show that a factor map is equicontinuous, if and only if it is mean equicontinuous and distal. Furthermore, we show that a factor map is topo-isomorphic, if and only if it is mean equicontinuous and proximal. We present that the notions of topo-isomorphy and Banach proximality coincide for all factor maps. In the second part of the paper we turn our attention to decomposition and composition properties. It is well known that a mean equicontinuous action is a topo-isomorphic extension of an equicontinuous action. In the context of minimal and the context of weakly mean equicontinuous actions, respectively, we show that any mean equicontinuous factor map can be decomposed into an equicontinuous factor map after a topo-isomorphic factor map. Furthermore, for factor maps between weakly mean equicontinuous actions we show that a factor map is mean equicontinuous, if and only if it is the composition of an equicontinuous factor map after a topo-isomorphic factor map.  We will see that this decomposition is always unique up to conjugacy. 
\end{abstract}
\maketitle

\section{Introduction}\label{sec:introduction}

	Fundamental concepts in the theory of topological dynamical systems are given by the notions of equicontinuity, distality and proximality. Since any action can be interpreted as a factor map onto the trivial action on one point action, it is natural to define these notions not only for actions, but also for factor maps. This type of generalization allows for strong structural results, like for example the celebrated Furstenberg structure theorem \cite{auslander1988minimal, ellis1978furstenberg, furstenberg1963structure}.

	A extensively studied notion in the context of topological dynamics is the notion of a \emph{mean equicontinuous} action. It was introduced under the name of \emph{mean-L-stability} by Fomin \cite{fomin1951dynamical} and is also called \emph{equicontinuity in the mean}. See \cite{auslander1959mean, auslander1980interval, cai2022properties, downarowicz2016isomorphic, downarowicz2021all, fomin1951dynamical, fuhrmann2023continuity, fuhrmann2022structure, garcia2017weak, garcia2021mean, garcia2019mean, huang2021bounded, li2015mean, li2018chaotic, qiu2020note, scarpellini1982stability, xu2022weak, yang2024anote} for some of the relevant references.  
	We recommend \cite{li2021mean} for an overview over the recent developments of the theory of mean equicontinuous actions. 
	
	It is natural to ask, whether this notion can be generalized to a notion for factor maps. We will present below how this can be achieved. 
For this we will work in the context of countable (discrete) amenable groups following
	\cite{fuhrmann2023continuity, fuhrmann2022structure, garcia2017weak, lkacka2018quasi, xu2022weak}. 
	Let us recall the definition of mean equicontinuity for actions. We denote $\Delta$ for the symmetric difference of sets and $|\cdot|$ for the cardinality. 
The \emph{amenability} of a countable group is characterized by the existence of a F\o lner sequence $(F_n)_{n\in \mathbb{N}}$, i.e.\ a sequence of finite sets $F_n$ for which 
	$|F_n\Delta gF_n|/|F_n|\to 0$ for all $g\in G$. 	
	For an action of a countable amenable group $G$ on a compact metric space $(X,d)$, we introduce the \emph{Weyl pseudometric} as 
	\[D(x,x'):=\sup_{\mathcal{F}} \limsup_{n\to \infty}\frac{1}{|F_n|}\sum_{g\in F_n}d(g.x,g.x'),\] where the supremum is taken over all F\o lner sequences $\mathcal{F}=(F_n)_{n\in \mathbb{N}}$. As shown in \cite[Lemma 7]{lkacka2018quasi}, we have 
	\[D(x,x')=\limsup_{n\to \infty}\sup_{g'\in G}\frac{1}{|F_n|}\sum_{g\in F_n}d(gg'.x,gg'.x')\] for any F\o lner sequence $(F_n)$. See \cite{lkacka2018quasi} for further definitions of equivalent pseudo metrics. 	
	It is straight forward to observe that for equivalent metrics on $X$, we obtain equivalent Weyl pseudometrics.
	An action is called \emph{(Weyl) mean equicontinuous}, if and only if for any $\epsilon>0$ there exists $\delta>0$ such that for all $x,x'\in X$ we have that $d(x,x')<\delta$ implies $D(x,x')<\epsilon$. Since all continuous metrics on $X$ are equivalent this notion does not depend on the choice of a continuous metric on $X$. 
	
	Now consider a factor map $\pi\colon X\to Y$ and denote $R(\pi):=\{(x,x')\in X^2;\, \pi(x)=\pi(x')\}$. In order to define mean equicontinuity for $\pi$ the following property appears to be a natural candidate.
	
\begin{itemize}
	\item[(M)] For $\epsilon>0$ there exists $\delta>0$ such that all $(x,x')\in R(\pi)$ with $d(x,x')<\delta$ satisfy $D(x,x')<\epsilon$.
\end{itemize}
	Clearly also (M) is independent from the choice of a continuous metric. 
	Note that an action is mean equicontinuous, if and only if the factor map onto one point satisfies (M). Furthermore, it is straight forward to observe that any equicontinuous factor map satisfies (M). 
	However, it turns out that (M) is to weak for a nice structure theory. To shed some light on this we need the notion of a Banach proximal pair. 	 	
	
	We call a pair $(x,x')\in X^2$ \emph{Banach proximal}, whenever $D(x,x')=0$. Clearly also this notion does not depend on the choice of a continuous metric on $X$. A pair $(x,x')$ is Banach proximal, if and only if the set $\{g\in G;\, d(g.x,g.x')<\epsilon\}$ has (lower) Banach density $1$ for any $\epsilon>0$. See \cite[Section 3]{lkacka2018quasi} for details on Banach densities and \cite[Corollary 8]{lkacka2018quasi} for reference. It follows that this definition is equivalent to the one considered in \cite{li2011chaos, li2014proximality, li2015mean}. We denote $BP$ for the \emph{Banach proximal} relation, i.e.\ the set of all Banach proximal pairs. 
	Recall from \cite[Theorem 3.5]{li2015mean} that for any mean equicontinuous action $X$ the Banach proximal relation $BP$ and the regionally proximal relation $RP$ coincide. 
	Let us denote $BP(\pi):=R(\pi)\cap BP$ for the \emph{$\pi$-Banach proximal relation}. 
	We will see in Example \ref{exa:M2doesNotImplyM1} below, that (M) is not sufficient to ensure that $BP(\pi)$ and the $\pi$-regionally proximal relation $RP(\pi)$ coincide. 
	We thus aim for a property stronger than (M) to define mean equicontinuous factor maps. 
	
	Exploring the pseudometric properties of $D$ it is straight forward to observe that $X$ is mean equicontinuous, if and only if $D\in C(X^2)$. Another natural candidate for a generalization thus would be to ask for $D\in C(R(\pi))$. However, in Example \ref{exa:Dnotcontinuous} below, we will see that even for equicontinuous extensions, we do not necessarily have $D\in C(R(\pi))$. Thus this condition is to strong. 
	
	The key to find the right property is to reformulate (M). 
	For this we call a sequence $(x_n,x_n')$ in $X^2$ \emph{asymptotically Banach proximal}, whenever $D(x_n,x_n')\to 0$. Clearly also this notion is independent from the choice of a continuous metric in $X$. 
	We will see in Proposition \ref{pro:M1characterization} below, that (M) is satisfied, if and only if for convergent sequences in $R(\pi)$ the Banach proximality of the limit implies the asymptotical Banach proximality of the sequence. We propose the following definition of a mean equicontinuous factor map. 
	
\begin{definition}
	A factor map $\pi\colon X\to Y$ between actions of a countable amenable group $G$ is called \emph{mean equicontinuous}, whenever for convergent sequences in $R(\pi)$ the asymptotical Banach proximality of the sequence is equivalent to the Banach proximality of the limit. 
\end{definition}

	Since for actions $X$ mean equicontinuity is equivalent to $D\in C(X^2)$ it is straight forward to observe that an action $X$ is mean equicontinuous, if and only if the factor map onto a point is mean equicontinuous. Furthermore, we will see in Theorem \ref{the:char_equicontinuity} that any equicontinuous factor map is mean equicontinuous. This justifies the choice of such a notion. As already mentioned, we will show below (in  Theorem \ref{the:MEimpliesRPisRBPisPisBPisICER}) that this definition allows us to show the following generalization of \cite[Theorem 3.5]{li2015mean}. 
	 
\begin{theorem*}
	For any mean equicontinuous factor map $\pi$ we have that $RP(\pi)=BP(\pi)$ is a closed invariant equivalence relation. 
\end{theorem*}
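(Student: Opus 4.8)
The plan is to establish separately the four assertions of the statement — that $BP(\pi)$ is an equivalence relation, is $G$-invariant, is closed, and coincides with $RP(\pi)$ — with mean equicontinuity entering only in the last two. First I would collect the facts available for an arbitrary factor map. The relation $R(\pi)$ is a closed $G$-invariant equivalence relation, being the preimage of the diagonal of $Y^2$ under the continuous equivariant map $\pi\times\pi$. The Weyl pseudometric $D$ is a $G$-invariant pseudometric on $X$: symmetry and the triangle inequality descend from $d$ through the averages and the $\limsup$, and $G$-invariance is read off the identity $D(x,x')=\limsup_{n\to\infty}\sup_{g'\in G}\frac{1}{|F_n|}\sum_{g\in F_n}d(gg'.x,gg'.x')$, since replacing $(x,x')$ by $(h.x,h.x')$ turns the inner sum into $\frac{1}{|F_n|}\sum_{g\in F_n}d(gg'h.x,gg'h.x')$ and $g'\mapsto g'h$ is a bijection of $G$. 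Consequently $BP=\{(x,x'):D(x,x')=0\}$ is a $G$-invariant equivalence relation, and hence so is $BP(\pi)=R(\pi)\cap BP$; this settles $G$-invariance and the equivalence-relation property without any appeal to mean equicontinuity.

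Next I would prove closedness. If a sequence $(x_n,x_n')\in BP(\pi)$ converges to $(x,x')$, then $(x,x')\in R(\pi)$ since $R(\pi)$ is closed, so $(x_n,x_n')$ is a convergent sequence in $R(\pi)$; as $D(x_n,x_n')=0$ for every $n$ the sequence is asymptotically Banach proximal, so mean equicontinuity forces its limit to be Banach proximal, i.e.\ $(x,x')\in R(\pi)\cap BP=BP(\pi)$.

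It then remains to prove $BP(\pi)=RP(\pi)$. The inclusion $BP(\pi)\subseteq RP(\pi)$ is general: for $(x,x')\in BP(\pi)$ and any $\epsilon>0$ the set $\{g\in G:d(g.x,g.x')<\epsilon\}$ has Banach density $1$, hence is nonempty, and so the constant sequences $x_n\equiv x$, $x_n'\equiv x'$ (which lie in $R(\pi)$) together with suitable $g_n\in G$ witness $(x,x')\in RP(\pi)$. For the converse I would take $(x,x')\in RP(\pi)$ and fix sequences $(x_n,x_n')$ in $R(\pi)$ with $(x_n,x_n')\to(x,x')$ and elements $g_n\in G$ with $d(g_n.x_n,g_n.x_n')\to 0$ that witness this. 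The key claim is that $D(x_n,x_n')\to 0$; since a real sequence tends to $0$ as soon as each of its subsequences has a further subsequence tending to $0$, it suffices to argue along subsequences. Given a subsequence, compactness of $X^2$ yields a further subsequence along which $(g_n.x_n,g_n.x_n')$ converges, necessarily to a diagonal point $(z,z)$ because $d(g_n.x_n,g_n.x_n')\to 0$; here each $(g_n.x_n,g_n.x_n')$ lies in $R(\pi)$ by equivariance of $\pi$, and $(z,z)$ is Banach proximal, so mean equicontinuity gives $D(g_n.x_n,g_n.x_n')\to 0$ along this sub-subsequence, and this equals $D(x_n,x_n')$ by $G$-invariance of $D$. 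This proves the claim, so $(x_n,x_n')\to(x,x')$ is a convergent sequence in $R(\pi)$ that is asymptotically Banach proximal, whence mean equicontinuity yields $D(x,x')=0$ and thus $(x,x')\in BP(\pi)$.

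I expect the inclusion $RP(\pi)\subseteq BP(\pi)$ to be the main obstacle: it runs both implications of mean equicontinuity in tandem — the first to transfer Banach proximality from the diagonal limit $(z,z)$ back onto the approximating sequence via the $G$-invariance of $D$, the second to carry it further to the limit $(x,x')$ — and it depends on the approximating pairs witnessing membership in $RP(\pi)$ being taken \emph{inside} $R(\pi)$ (the relative, as opposed to the absolute, regionally proximal relation) together with the metrizability of $X^2$, which is what permits the subsequence arguments in place of nets.
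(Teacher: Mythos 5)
Your proof is correct and takes essentially the same route as the paper: the paper packages the argument into property (M) and the auxiliary relation $RBP(\pi)$ (Propositions \ref{pro:M1characterization}, \ref{pro:propertiesRBPandBP} and \ref{pro:characterizationM2}), but the substance — $D$ is an invariant pseudometric, closedness via the ``asymptotically Banach proximal sequence has Banach proximal limit'' direction, and $RP(\pi)\subseteq BP(\pi)$ by translating the witnesses with $g_n$, extracting a convergent subsequence with diagonal limit and using the $G$-invariance of $D$ — is identical. You have simply inlined the intermediate lemmas rather than citing them.
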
	

	Any equicontinuous factor map is mean equicontinuous. Another important class of mean equicontinuous factor maps is given by Banach proximal factor maps, which we introduce next. A factor map $\pi\colon X\to Y$ is called \emph{Banach proximal}, whenever $BP(\pi)=R(\pi)$, i.e.\ whenever all pairs $(x,x')\in X^2$ with $\pi(x)=\pi(x')$ satisfy $D(x,x')=0$ \cite{li2015mean}. 
	Note that a factor map $\pi\colon X\to Y$ is called \emph{topo-isomorphic}, whenever for each invariant Borel probability measure $\mu$ on $X$ the map $\pi$ induces a measure theoretic isomorphism between $X$ equipped with $\mu$ and $Y$ equipped with the push forward of $\mu$ under $\pi$.
	In \cite[Theorem 4.3]{qiu2020note} and in \cite[Chapter 3]{fuhrmann2022structure} it was shown that whenever $\pi\colon X\to Y$ is a factor map onto an equicontinuous action, then $\pi$ is Banach proximal, if and only if it is topo-isomorphic. 
	We will see in Theorem \ref{the:topo_iso_and_Banach_prox} that Banach proximality and topo-isomorphy are equivalent concepts for all factor maps and hence that any topo-isomorphic factor map is mean equicontinuous. Since any Banach proximal pair is proximal, we also observe that topo-isomorphic factor maps are proximal. In fact this observation can be used to characterize topo-isomorphic factor maps. 
	
\begin{theorem**}{\ref{the:characterization_Banach_proximal_factor_maps}}
	For a factor map $\pi$ the following statements are equivalent.
	\begin{itemize}
	\item[(i)] $\pi$ is topo-isomorphic.
	\item[(ii)] $\pi$ is mean equicontinuous and proximal. 
	\item[(iii)] $\pi$ is mean equicontinuous and $R(\pi)=RP(\pi)$. 
	\end{itemize}
\end{theorem**}

	It was shown in \cite[Corollary 3.6]{li2015mean} that an action (of $\mathbb{Z}$) is equicontinuous, if and only if it is mean equicontinuous and distal. We will see in Theorem \ref{the:char_equicontinuity} the following interesting parallel between equicontinuous and topo-isomorphic factor maps. 

\begin{theorem*}
A factor map is equicontinuous, if and only if it is mean equicontinuous and distal. 
\end{theorem*}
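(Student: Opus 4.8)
The plan is to prove the two implications by rather different means. The forward implication reduces to a two-sided comparison of the metric $d$ with the Weyl pseudometric $D$ on $R(\pi)$; the converse is a compactness argument that plays the two halves of the definition of mean equicontinuity off against distality and the (already recorded) inclusion of the Banach proximal relation in the proximal relation.

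For the forward direction, assume $\pi$ is equicontinuous. Distality is the standard observation: if $(x,x')\in R(\pi)$ is proximal, pick $g$ with $d(g.x,g.x')$ below the equicontinuity modulus $\delta(\epsilon)$; applying equicontinuity to $(g.x,g.x')\in R(\pi)$ gives $d(h.g.x,h.g.x')<\epsilon$ for all $h\in G$, and $h=g^{-1}$ yields $d(x,x')<\epsilon$, so $x=x'$ as $\epsilon$ is arbitrary. For mean equicontinuity I would first record: for every $\epsilon>0$ there is $\delta>0$ such that, for $(x,x')\in R(\pi)$, (i) $d(x,x')<\delta$ implies $D(x,x')\le\epsilon$, and (ii) $D(x,x')<\delta$ implies $d(x,x')<\epsilon$. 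Estimate (i) is property (M) and is immediate, since equicontinuity makes every F\o lner average $\frac1{|F_n|}\sum_{g\in F_n}d(g.x,g.x')$ at most $\epsilon$. For (ii) I would take $\delta=\delta(\epsilon)$ to be the equicontinuity modulus and use the identity $D(x,x')=\limsup_n\sup_{g'\in G}\frac1{|F_n|}\sum_{g\in F_n}d(gg'.x,gg'.x')$: if $D(x,x')<\delta$, then for large $n$ the average over $F_n$ with $g'=e$ is below $\delta$, so some $g\in F_n$ has $d(g.x,g.x')<\delta=\delta(\epsilon)$, and then equicontinuity together with $h=g^{-1}$ gives $d(x,x')<\epsilon$. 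Granting (i) and (ii), mean equicontinuity follows: if $(x_n,x_n')\to(x,x')$ in $R(\pi)$ with $D(x_n,x_n')\to0$, then (ii) forces $d(x_n,x_n')\to0$, hence $x=x'$ and $D(x,x')=0$; conversely, if the limit is Banach proximal then (ii) applied to the limit gives $x=x'$, so $d(x_n,x_n')\to0$ and (i) gives $D(x_n,x_n')\to0$.

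For the converse, assume $\pi$ is mean equicontinuous and distal. Since a Banach proximal pair is in particular proximal (the set of $g$ with $d(g.x,g.x')<\epsilon$ has Banach density $1$, hence is nonempty), distality forces $BP(\pi)$ to consist of diagonal pairs only. Suppose, for contradiction, that $\pi$ is not equicontinuous: there are $\epsilon_0>0$, pairs $(x_n,x_n')\in R(\pi)$ with $d(x_n,x_n')\to0$, and $g_n\in G$ with $d(g_n.x_n,g_n.x_n')\ge\epsilon_0$. Passing to a subsequence, $(x_n,x_n')\to(z,z)$ for some $z\in X$; the limit is Banach proximal, so mean equicontinuity gives $D(x_n,x_n')\to0$. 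Because $D$ is $G$-invariant (transparent from the reformulation of $D$ quoted above), the pairs $(g_n.x_n,g_n.x_n')\in R(\pi)$ also satisfy $D(g_n.x_n,g_n.x_n')\to0$; passing to a further subsequence, $(g_n.x_n,g_n.x_n')\to(w,w')\in R(\pi)$ with $d(w,w')\ge\epsilon_0$, so $w\neq w'$. But then $(g_n.x_n,g_n.x_n')$ is a convergent, asymptotically Banach proximal sequence in $R(\pi)$, so by mean equicontinuity $(w,w')$ is Banach proximal, hence a diagonal pair --- contradicting $w\neq w'$. Therefore $\pi$ is equicontinuous.

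The only genuinely delicate point I anticipate is estimate (ii) in the forward direction: one must convert smallness of a Weyl average into the existence of a single good translate and then use invertibility in $G$ to transport the equicontinuity estimate back to the identity. Everything else is a routine combination of compactness, the pseudometric axioms and $G$-invariance of $D$, and the already-noted fact that Banach proximal pairs are proximal; in particular note that the converse really needs both halves of the definition of a mean equicontinuous factor map, one to produce $D(x_n,x_n')\to0$ and the other to pass this property to the limit.
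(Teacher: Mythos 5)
Your argument is correct, and it reaches the theorem by a genuinely more self-contained route than the paper does. The paper proves both implications by passing through Proposition \ref{pro:characterizationM2} (a factor map is mean equicontinuous iff it satisfies (M) and $RP(\pi)=BP(\pi)$) together with Auslander's characterization of equicontinuous factor maps as those with $RP(\pi)=\Delta_X$: for the forward direction it sandwiches $\Delta_X\subseteq BP(\pi)\subseteq RP(\pi)=\Delta_X$, and for the converse it deduces $RP(\pi)=BP(\pi)=\Delta_X$ and cites the classical equivalence. You never invoke the regionally proximal relation at all. In the forward direction your substitute is the two-sided comparison of $d$ and $D$ on $R(\pi)$; the delicate half, your estimate (ii) that $D(x,x')<\delta(\epsilon)$ forces $d(x,x')<\epsilon$, is sound (and could be shortened by quoting $\check d\le D$ to produce the single good translate, then using the $G$-invariance of $\hat d$ to transport the bound back to the identity). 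In the converse, your compactness argument is in effect a direct, from-scratch proof that a mean equicontinuous factor map with $BP(\pi)\subseteq\Delta_X$ is equicontinuous, using the Banach proximal relation where the classical argument uses $RP(\pi)$; note that you only ever use Banach distality, which is weaker than distality, so your argument silently also covers the paper's third equivalent condition. What your route buys is independence from the cited background and from Proposition \ref{pro:characterizationM2}; what the paper's buys is brevity and the reusable intermediate fact that mean equicontinuity collapses $RP(\pi)$ onto $BP(\pi)$. One cosmetic point: the negation of equicontinuity literally gives $\hat d(x_n,x_n')\ge\epsilon_0$, so you should pick $g_n$ with $d(g_n.x_n,g_n.x_n')\ge\epsilon_0/2$; this changes nothing downstream.
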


	In the study of topological dynamics it is natural to relate actions to equicontinuous ones. For an action one can consider all equicontinuous factors and it is well known that among them there exists a maximal equicontinuous factor \cite{auslander1988minimal}. It was shown in \cite{downarowicz2016isomorphic, fuhrmann2022structure, li2015mean}
	that an action $X$ is mean equicontinuous, if and only if the factor map $\phi\colon X\to \mathbb{T}$ onto it's maximal equicontinuous factor $\mathbb{T}$ is a topo-isomorphic factor map.
	
	 Reformulating this result, we observe that any mean equicontinuous factor map $\pi\colon X\to \{0\}$ onto one point can be decomposed into $\pi=\psi\circ \phi$, with $\phi\colon X\to \mathbb{T}$ topo-isomorphic and $\psi\colon \mathbb{T}\to \{0\}$ equicontinuous. 
	It is natural to ask, whether such a decomposition is possible in general. 
	In Section \ref{sec:decomposition} we will see the following. 
	
\begin{theorem**}{\ref{the:decompositionMinimal}}
	Any mean equicontinuous factor map $\pi\colon X\to Y$ between minimal actions decomposes as $\pi=\psi\circ \phi$ into a topo-isomorphic factor map $\phi\colon X\to X\big/BP(\pi)$ and an equicontinuous factor map $\psi\colon X\big/BP(\pi)\to Y$.
\end{theorem**}

	In order to go beyond the minimal case we consider weakly mean equicontinuous actions \cite{fuhrmann2023continuity, garcia2021mean, lkacka2018quasi, oxtoby1952ergodic, xu2022weak, yang2024anote, zheng2020new}. 
	 An action is called \emph{weakly mean equicontinuous}, whenever each $x\in X$ allows for only one invariant Borel probability measure $\mu_x$ on its orbit closure and the mapping $x\mapsto\mu_x$ is continuous w.r.t.\ the weak*-topology.  
	  Note that such actions are also called \emph{continuously pointwise ergodic} \cite{downarowicz2021all}. 
	 Trivially any uniquely ergodic action is weakly mean equicontinuous. 	  
Furthermore, any mean equicontinuous action is weakly mean equicontinuous. This was shown by \cite{fomin1951dynamical} for transitive actions. The general statement can be found in \cite{fuhrmann2022structure, xu2022weak} and the references within. 
	  See \cite[Example 5.1]{downarowicz2021all} for an example of a weakly mean equicontinuous action that is not mean equicontinuous and allows for more than one invariant Borel probability measure. Note that any minimal action that allows for at least two ergodic invariant Borel probability measures is not weakly mean equicontinuous. Such actions exist as demonstrated for example in \cite{downarowicz1991choquet}. Thus the following theorem is neither a generalization, nor a special case of Theorem \ref{the:decompositionMinimal}.  
	
\begin{theorem**}{\ref{the:decompositionWME}}
	Any mean equicontinuous factor map $\pi\colon X\to Y$ between weakly mean equicontinuous actions decomposes as $\pi=\psi\circ \phi$ into a topo-isomorphic factor map $\phi\colon X\to X\big/BP(\pi)$ and an equicontinuous factor map $\psi\colon X\big/BP(\pi)\to Y$.
\end{theorem**}

	We will furthermore see in Theorem \ref{the:uniqueness_factorization} that whenever a mean equicontinuous map decomposes in this way the factor maps are uniquely determined up to conjugacy. 
Recall that equicontinuous factor maps preserve the topological entropy \cite{yan2015conditional}. Because of the variational principle also topo-isomorphic factor maps preserve the topological entropy. We thus have the following. 

\begin{corollary}\label{cor:entropy}
Assume that $X$ is minimal or weakly mean equicontinuous. 
	If $\pi\colon X\to Y$ is a mean equicontinuous factor map, then $X$ and $Y$ have the same topological entropy. 
\end{corollary}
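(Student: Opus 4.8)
The plan is to reduce everything to the two decomposition theorems. Since $X$ is assumed minimal or weakly mean equicontinuous, Theorem~\ref{the:decompositionMinimal} respectively Theorem~\ref{the:decompositionWME} provides a factorization $\pi=\psi\circ\phi$ in which $\phi\colon X\to X\big/BP(\pi)$ is topo-isomorphic and $\psi\colon X\big/BP(\pi)\to Y$ is equicontinuous; note that $X\big/BP(\pi)$ is a compact metric $G$-system because $BP(\pi)=RP(\pi)$ is a closed invariant equivalence relation by Theorem~\ref{the:MEimpliesRPisRBPisPisBPisICER}. It then suffices to combine the two facts already recorded above: equicontinuous factor maps preserve topological entropy \cite{yan2015conditional}, so $h_{\mathrm{top}}(X\big/BP(\pi))=h_{\mathrm{top}}(Y)$; and topo-isomorphic factor maps preserve topological entropy, so $h_{\mathrm{top}}(X)=h_{\mathrm{top}}(X\big/BP(\pi))$. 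Chaining these equalities yields $h_{\mathrm{top}}(X)=h_{\mathrm{top}}(Y)$.

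For completeness I would spell out the second fact. By the variational principle for actions of countable amenable groups, the topological entropy of such a system is the supremum of the Kolmogorov--Sinai entropies $h_\mu$ over its invariant Borel probability measures. Any factor map is surjective on the simplex of invariant Borel probability measures, so every invariant measure $\nu$ on $X\big/BP(\pi)$ is of the form $\phi_*\mu$ for some invariant $\mu$ on $X$; and since $\phi$ is topo-isomorphic, $\phi$ realizes a measure-theoretic isomorphism between $(X,\mu)$ and $(X\big/BP(\pi),\phi_*\mu)$ for each invariant $\mu$, whence $h_\mu(X)=h_{\phi_*\mu}(X\big/BP(\pi))$ because measure isomorphisms preserve entropy. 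Taking the supremum over $\mu$ on the left and using surjectivity on the right gives the claimed equality.

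The present corollary carries essentially no difficulty of its own: all the substance lives in Theorems~\ref{the:decompositionMinimal} and~\ref{the:decompositionWME}, and once those are in place the argument is the elementary chaining above. The only mild points of care are the two structural inputs used for the topo-isomorphic factor — surjectivity of a factor map on invariant measures and the entropy-invariance of measure isomorphisms — both of which are classical, so I do not anticipate a genuine obstacle here.
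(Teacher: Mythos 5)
Your proposal is correct and follows essentially the same route as the paper, which derives the corollary from the decomposition theorems together with the facts that equicontinuous factor maps preserve topological entropy and that topo-isomorphic factor maps do so via the variational principle. You merely spell out the latter fact (surjectivity of $\phi_*$ on invariant measures and invariance of Kolmogorov--Sinai entropy under measure isomorphism) in more detail than the paper does.
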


Unfortunately, it remains open, whether the decomposition presented in the Theorems \ref{the:decompositionMinimal} and \ref{the:decompositionWME} works for general mean equicontinuous factor maps. 
	In Section \ref{sec:combiningFactorMaps} we turn to the opposite question. What happens, if one composes topo-isomorphic and equicontinuous factor maps? 
	
\begin{theorem**}{\ref{the:WMEcomposingtiandEq}}
Assume that $X$ is weakly mean equicontinuous.
	If $\phi\colon X\to Y$ is a topo-isomorphy and $\psi\colon Y\to Z$ is mean equicontinuous, then $\psi\circ \phi\colon X\to Z$ is mean equicontinuous. 
\end{theorem**}

We will see in Example \ref{exa:topo-isometricAfterEquicont} that the order in the composition is important. We will demonstrate that there exist factor maps $\phi\colon X\to Y$ and $\psi\colon Y\to Z$ between minimal and uniquely ergodic actions of $\mathbb{Z}$ such that $\phi$ is equicontinuous, $\psi$ is topo-isomorphic and $\psi\circ \phi$ is not mean equicontinous. Unfortunately, it remains open, whether a similar result to Theorem \ref{the:WMEcomposingtiandEq} holds  for factor maps, even in the context of minimal actions.

\section{Preliminaries}\label{sec:preliminaries}

A pseudometric $d$ on a compact topological space $X$ is called \emph{continuous}, whenever $d\in C(X^2)$. 
Note that $X$ is metrizable, if and only if there exists a continuous metric $d$ on $X$. In this case the metric $d$ yields the topology of $X$.
For a continuous surjection $\pi\colon X\to Y$ between compact metrizable spaces, we denote $\hat{\pi}$ for the map $X^2\to Y^2$ with $\hat{\pi}(x,x'):=(\pi(x),\pi(x'))$. We denote $\Delta_X:=\{(x,x);\, x\in X\}$ for the \emph{diagonal of $X$}.  We denote $\overline{A}$ for the closure of a set $A\subseteq X$. 

For a compact metric space $(X,d)$ we equip $X$ with the Borel $\sigma$-algebra and write $\mathcal{M}(X)$ for the set of all Borel probability measures equipped with the weak*-topology. By the Banach–Alaoglu theorem $\mathcal{M}(X)$ is compact. 	

\subsection{Actions}

	Consider a countable group $G$ and a compact metric space $(X,d)$.  
	An \emph{action} of $G$ on $X$ is a group-homomorphism $\alpha$ from $G$ into the set of homeomorphisms $X\to X$. 
	If there is no need of a specified symbol $\alpha$ we write $g.x:=\alpha(g)(x)$ and say that $G$ \emph{acts} on $X$.   For $A\subseteq X$ we denote $A.x:=\{g.x;\, g\in A\}$. The \emph{orbit of $x$} is given by 
	$G.x$ and its \emph{orbit closure} is $\overline{G.x}$. 
	
	For $g\in G$ and $\mu\in \mathcal{M}(X)$ we denote 
	$g.\mu(A):=\mu(g^{-1}.A)$ for any Borel measurable $A\subseteq X$. 
	A measure $\mu\in \mathcal{M}(X)$ is called \emph{invariant}, if $g.\mu=\mu$ for all $g\in G$.
	An invariant $\mu\in \mathcal{M}(X)$ is said to be ergodic, if $\mu(A)\in \{0,1\}$ for all Borel subsets $A\subseteq G$ with $g.A=A$. 
	We denote $\mathcal{M}_G(X)$ and $\mathcal{M}_G^e(X)$ 
	for the set of invariant and the set of ergodic 
	Borel probability measures respectively.  
	Consider a F\o lner sequence $\mathcal{F}=(F_n)$. A point $x\in X$ is called \emph{$\mathcal{F}$-generic} for some $\mu\in \mathcal{M}_G(X)$, whenever $1/|F_n|\sum_{g\in F_n} g.\delta_x\to\mu$.	
\begin{remark}\label{rem:genericityCreation}
	For any action $X$ and any $\mu\in \mathcal{M}_G^e(X)$ there exists $x\in X$ and a F\o lner sequence $\mathcal{F}$, such that $x$ is $\mathcal{F}$-generic for $\mu$. 
	See \cite[Theorem 2.4]{fuhrmann2022structure}. 
\end{remark}
	An action $X$ is said to be \emph{uniquely ergodic}, if there exists a unique invariant measure on $X$. 
	An action $X$ is called \emph{pointwise uniquely ergodic}, if for any $x\in X$ there exists a unique invariant measure $\mu_x$ on the orbit closure $\overline{G.x}$. 
	An action $X$ is called \emph{weakly mean equicontinuous}, if it is pointwise uniquely ergodic and the mapping $x\mapsto \mu_x$ is weak*-continuous. Note that weak mean equicontinuous systems are also called \emph{continuously pointwise ergodic} \cite{downarowicz2021all} and can be characterized via a pseudo norm \cite[Theorem 1.6]{xu2022weak}. Since clearly any factor of a weakly mean equicontinuous action is weakly mean equicontinuous we observe each mean equicontinuous action to be weakly mean equicontinuous from \cite[Theorem 1.10]{xu2022weak}. 
	 
	For $(x,x')\in X^2$ we denote $\check{d}(x,x'):=\inf_{g\in G}d(g.x,g.x')$ and $\hat{d}(x,x'):=\sup_{g\in G}d(g.x,g.x')$. 
	A pair $(x,x')\in X^2$ is called \emph{proximal}, if $\check{d}(x,x')=0$. A pair $(x,x')\in X^2$ is called \emph{distal}, if it is not proximal.
	We call a sequence $(x_n,x_n')$ in $X^2$  \emph{asymptotically proximal}, whenever $\check{d}(x_n,x_n')\to 0$. Limit points of asymptotically proximal sequences in $X^2$ are called \emph{regional proximal}. We denote $P$ for the set of all proximal pairs and $RP$ for the set of all regional proximal pairs. 
	An action is called \emph{equicontinuous}, if for any $\epsilon>0$ there exists $\delta>0$ such that for all $x,x'\in X$ we have that $d(x,x')<\delta$ implies $\hat{d}(x,x')<\epsilon$. Recall from \cite{auslander1988minimal} that an action is equicontinuous, if and only if $RP=\Delta_X$. 
	An action is called \emph{distal}, if $P=\Delta_X$. It is called \emph{proximal}, if $P=X^2$. Note that these notions are independent from the choice of a continuous pseudometric. We recommend \cite{auslander1988minimal} for more details. 

\subsection{Factor maps}

	Consider actions of a countable group $G$ on compact metric spaces $X,Y$. 
	A continuous and surjective map $\pi\colon X\to Y$ is called a \emph{factor map}, if $\pi(g.x)=g.\pi(x)$ holds for all $g\in G$ and $x\in X$. 
	If a factor map is injective, it is a homeomorphism, since $X$ and $Y$ are compact metric spaces. In this case we call $\pi$ a \emph{conjugacy}. 
	We denote $\pi_*\colon \mathcal{M}_G(X)\to \mathcal{M}_G(Y)$ for the \emph{push forward} operation, given by $(\pi_*\mu)(f)=\mu(f\circ \pi)$ for $\mu\in \mathcal{M}_G(X)$ and $f\in C(Y)$. 		
	
	Denote $R(\pi):=\{(x,x')\in X^2;\, \pi(x)=\pi(x')\}$. The set of \emph{$\pi$-proximal} pairs is given by $P(\pi):=P\cap R(\pi)$. 
	Limit points of asymptotically proximal sequences in $R(\pi)$ are called \emph{$\pi$-regional proximal} and we denote $RP(\pi)$ for the set of all $\pi$-regionally proximal pairs. Note that $P(\pi)\subseteq RP(\pi)\subseteq R(\pi)$. 

	A factor map $\pi\colon X\to Y$ is called \emph{equicontinuous}, if for all $\epsilon>0$ there exists $\delta>0$ such that $(x,x')\in R(\pi)$ with $d(x,x')<\delta$
	satisfy $\hat{d}(g.x,g.x')<\epsilon$. A factor map $\pi\colon X\to Y$ is equicontinuous, if and only if $RP(\pi)= \Delta_X$ \cite[Chapter 7]{auslander1988minimal}. 		
	A factor map $\pi\colon X\to Y$ is called \emph{distal}, if $P(\pi)= \Delta_X$. 	
	A factor map $\pi\colon X\to Y$ is called \emph{proximal}, if $P(\pi)= R(\pi)$.  
	A factor map $\pi\colon X\to Y$ is called \emph{topo-isomorphic}, if for any $\mu\in \mathcal{M}_G(X)$ there exists $M\subseteq Y$ Borel measurable with $\pi_*\mu(M)=\mu(\pi^{-1}(M))=1$, such that $\pi$ restricts to a bi-measurable bijection $\pi^{-1}(M)\to M$.

\subsection{Besicovitch and Weyl pseudometrics}	
\label{subsec:prelims_besicovitchAndWeyl}

	Consider an action on a compact metrizable space $X$ and a continuous pseudometric $d$ on  $X$. 
For a F\o lner sequence $\mathcal{F}$ we denote 
\[D^\mathcal{F}(x,x'):=\limsup_{n\to \infty}\frac{1}{|F_n|}\sum_{g\in F_n}d(g.x,g.x').\] 
$D^\mathcal{F}$ is called the \emph{Besicovitch pseudometric} w.r.t.\ $\mathcal{F}$ (and $d$). 
Note that the Weyl pseudometric (w.r.t.\ $d$) is then given by 
$D=\sup_\mathcal{F}D^\mathcal{F}$, where the supremum is taken over all F\o lner sequences $\mathcal{F}$ in $G$. 
See \cite{lkacka2018quasi} for further possibilities how to define pseudo metrics that are equivalent to $D^\mathcal{F}$ and $D$.  

	We call a pair $(x,x')\in X^2$ \emph{Banach proximal}, whenever $D(x,x')=0$ and denote $BP$ for the set of Banach proximal pairs. 
	We call a sequence $(x_n,x_n')$ in $X^2$ \emph{asymptotically Banach proximal}, whenever $D(x_n,x_n')\to 0$. Note that since $\check{d}\leq D$ any asymptotically Banach proximal sequence is asymptotically proximal. For a factor map $\pi$ we denote $BP(\pi):=BP\cap R(\pi)$ for the set of all \emph{$\pi$-Banach proximal} pairs. Note that $BP\subseteq P\subseteq RP$ and that $BP(\pi)\subseteq P(\pi)\subseteq RP(\pi)\subseteq R(\pi)$. 
	As already presented in the introduction these notions are independent from the choice of a continuous metric.

\section{The property (M)}\label{sec:about(M)}
	We next characterize the property (M). Recall that mean equicontinuity of an action $X$ is equivalent to (M) for the factor map onto one point. Thus with $R(\pi)=X^2$ the following proposition also yields a characterization of mean equicontinuity for actions. 	

\begin{proposition}\label{pro:M1characterization}
	For a factor map $\pi$ the following statements are equivalent. 
\begin{itemize}
\item[(M)] For $\epsilon>0$ there exists $\delta>0$ such that all $(x,x')\in R(\pi)$ with $d(x,x')<\delta$ satisfy $D(x,x')<\epsilon$.
\item[(i)] Any sequence in $R(\pi)$ that only allows for Banach proximal cluster points is asymptotically Banach proximal. 
\item[(ii)] Any convergent sequence in $R(\pi)$ with a Banach proximal limit is asymptotically Banach proximal. 
\item[(iii)] Any convergent sequence in $R(\pi)$ with a limit in $\Delta_X$ is asymptotically Banach proximal. 
\item[(iv)] A sequence in $R(\pi)$ is asymptotically proximal, if and only if it is asymptotically Banach proximal. 
\end{itemize}		
\end{proposition}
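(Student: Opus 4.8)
The plan is to prove the cyclic chain of implications (M) $\Rightarrow$ (iv) $\Rightarrow$ (i) $\Rightarrow$ (ii) $\Rightarrow$ (iii) $\Rightarrow$ (M). Three elementary facts will be used throughout. First, $R(\pi)$ is closed (it is the preimage of the diagonal of $Y$ under the continuous map $\hat{\pi}$) and $G$-invariant (by equivariance of $\pi$); since $X^2$ is compact, every subsequence of a sequence in $R(\pi)$ has a convergent subsequence whose limit again lies in $R(\pi)$. Second, $D$ is $G$-invariant, i.e.\ $D(h.x,h.x')=D(x,x')$ for all $h\in G$; this is immediate from the identity $D(x,x')=\limsup_{n}\sup_{g'\in G}\frac{1}{|F_n|}\sum_{g\in F_n}d(gg'.x,gg'.x')$ by substituting $g'\mapsto g'h$ in the inner supremum. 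Third, $\check{d}\le D$, one has $\Delta_X\subseteq BP\subseteq P$, and $\check{d}$ is upper semicontinuous, being an infimum of the continuous functions $(x,x')\mapsto d(g.x,g.x')$.

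For (M) $\Rightarrow$ (iv), the implication ``asymptotically Banach proximal $\Rightarrow$ asymptotically proximal'' holds unconditionally because $\check{d}\le D$, so only the converse uses (M). Let $(x_n,x_n')$ lie in $R(\pi)$ with $\check{d}(x_n,x_n')\to 0$, and suppose towards a contradiction that $D(x_{n_k},x_{n_k}')\ge\epsilon_0>0$ along a subsequence. Choose $g_k\in G$ with $d(g_k.x_{n_k},g_k.x_{n_k}')<\check{d}(x_{n_k},x_{n_k}')+1/k$, so that $d(g_k.x_{n_k},g_k.x_{n_k}')\to 0$. Let $\delta>0$ be as in (M) for the value $\epsilon=\epsilon_0$; for all large $k$ the pair $(g_k.x_{n_k},g_k.x_{n_k}')\in R(\pi)$ has $d(g_k.x_{n_k},g_k.x_{n_k}')<\delta$, hence $D(g_k.x_{n_k},g_k.x_{n_k}')<\epsilon_0$, and $G$-invariance of $D$ yields $D(x_{n_k},x_{n_k}')<\epsilon_0$, contradicting the choice of the subsequence.

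For (iv) $\Rightarrow$ (i), let $(x_n,x_n')$ be a sequence in $R(\pi)$ all of whose cluster points lie in $BP$. If it were not asymptotically proximal, there would be $\epsilon_0>0$ and a subsequence with $\check{d}\ge\epsilon_0$; a further subsequence converges to some $(a,a')\in R(\pi)$, which is a cluster point, hence lies in $BP\subseteq P$, so $\check{d}(a,a')=0$, and upper semicontinuity of $\check{d}$ forces $\check{d}\le 0$ along that subsequence --- a contradiction. Thus $(x_n,x_n')$ is asymptotically proximal, and (iv) makes it asymptotically Banach proximal. The implications (i) $\Rightarrow$ (ii) $\Rightarrow$ (iii) are immediate, since a convergent sequence has its limit as its only cluster point and $\Delta_X\subseteq BP$. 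Finally, (iii) $\Rightarrow$ (M) is proved by contraposition: if (M) fails, there is $\epsilon>0$ and, for each $n$, some $(x_n,x_n')\in R(\pi)$ with $d(x_n,x_n')<1/n$ and $D(x_n,x_n')\ge\epsilon$; a convergent subsequence of $(x_n,x_n')$ lies in $R(\pi)$, has limit in $\Delta_X$ because $d(x_n,x_n')\to 0$, and is not asymptotically Banach proximal, contradicting (iii).

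I expect the one step that is not pure compactness-and-semicontinuity bookkeeping to be (M) $\Rightarrow$ (iv): the key device is to translate each obstructing pair by a group element along which its two coordinates are already $d$-close --- which is possible precisely because the limit or cluster points causing the obstruction are proximal --- and then to combine (M) with the $G$-invariance of the Weyl pseudometric. Everything else is routine.
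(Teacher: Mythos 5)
Your proof is correct. The core device is the same one the paper uses --- combine (M) with the $G$-invariance of $D$ by translating an obstructing pair to a group element where its coordinates are $d$-close --- but the organisation differs. The paper proves the equivalences in two overlapping clusters, establishing (i)$\Leftrightarrow$(ii) directly (by extracting a convergent subsequence from a putative non--Banach-proximal subsequence and applying (ii) to its limit), then (M)$\Rightarrow$(ii)$\Rightarrow$(iii)$\Rightarrow$(M), and finally (M)$\Rightarrow$(iv)$\Rightarrow$(iii); in particular it invokes the translation trick twice, once for (M)$\Rightarrow$(ii) (using that a Banach proximal limit admits some $g$ with $d(g.x,g.x')<\delta$) and once for (M)$\Rightarrow$(iv). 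You instead run a single cycle (M)$\Rightarrow$(iv)$\Rightarrow$(i)$\Rightarrow$(ii)$\Rightarrow$(iii)$\Rightarrow$(M), using the translation trick only once, and you recover (i) from (iv) via the upper semicontinuity of $\check{d}$ (an infimum of continuous functions) applied to a cluster point in $BP\subseteq P$, rather than from (ii). Both routes are sound and of comparable length; yours is marginally more economical in that the hard step appears once, while the paper's clustering makes the sub-equivalence of (M), (i), (ii), (iii) visible without reference to (iv). The steps you label as routine ((i)$\Rightarrow$(ii)$\Rightarrow$(iii) and the contrapositive of (iii)$\Rightarrow$(M)) match the paper's arguments essentially verbatim.
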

\begin{proof}
'(i)$\Rightarrow$(ii)': Trivial.\\
'(ii)$\Rightarrow$(i)': Let $(x_n,x_n')$ be a sequence in $R(\pi)$ that only allows for Banach proximal cluster points. 
If $(x_n,x_n')$ is not asymptotically Banach proximal, there exists $\delta>0$ and a subsequence $(x_{n_k},x_{n_k}')$ with $D(x_{n_k},x_{n_k}')>\delta$. 
Since $R(\pi)$ is compact, we assume w.l.o.g.\ that $(x_{n_k},x_{n_k}')$ converges to some $(x,x')$. 
Since $(x,x')$ is a cluster point of $(x_n,x_n')$ it is Banach proximal and hence (ii) yields that $(x_{n_k},x_{n_k}')$ must be asymptotically Banach proximal, a contradiction. 

This shows that (i) and (ii) are equivalent. \\
	'(M)$\Rightarrow$(ii)': Consider a convergent sequence $(x_n,x_n')$ in $R(\pi)$ and denote $(x,x')$ for its limit. Let us assume that $(x,x')$ is Banach proximal. 
	Let $\epsilon>0$ and choose $\delta>0$ according to (M). 	
	From $D(x,x')=0$ we observe that there exists $g\in G$ such that $d(g.x,g.x')<\delta$. For sufficiently large $n\in \mathbb{N}$ we observe $d(g.x_n,g.x_n')<\delta$. Since $(g.x_n,g.x_n')\in R(\pi)$, we have $D(x_n,x_n')=D(g.x_n,g.x_n')<\epsilon$ for such large $n$, from which we observe $D(x_n,x_n')\to 0$. \\
	'(ii)$\Rightarrow$(iii)': Trivial, since $\Delta_X\subseteq BP$. \\
	'(iii)$\Rightarrow$(M)': Let us assume that (M) is not satisfied. Then there exists $\epsilon>0$ and a sequence $(x_n,x_n')$ in $R(\pi)$ such that $d(x_n,x_n')\to 0$, while $D(x_n,x_n')\geq \epsilon$. Since $R(\pi)$ is compact we assume without lost of generality that $(x_n,x_n')$ converges to some $(x,x')\in R(\pi)$. From $d(x_n,x_n')\to 0$ we observe  $(x,x')\in \Delta_X$ and by (iii) $(x_n,x_n')$ must be asymptotically Banach proximal, a contradiction. \

This shows that (M), (i) (ii) and (iii) are equivalent. \\
	'(M)$\Rightarrow$(iv)': 
	Since $\check{d}\leq D$ any asymptotically Banach proximal sequence is asymptotically proximal. For the converse consider an asymptotically proximal sequence $(x_n,x_n')$ in $R(\pi)$. There exists a sequence $(g_n)$ in $G$, such that $d(g_n.x_n,g_n.x_n')\to 0$. 
	Let $\epsilon>0$ and choose $\delta>0$ according to (M).
	For large $n$, we observe that $d(g_n.x_n,g_n.x_n')<\delta$ and hence $D(x_n,x_n')=D(g_n.x_n,g_n.x_n')<\epsilon$. This shows $(x_n,x_n')$ to be asymptotically Banach proximal.\\
	'(iv)$\Rightarrow$(iii)': Any sequence $(x_n,x_n')$ with a limit in $\Delta_X$ satisfies $d(x_n,x_n')\to 0$. Since $\check{d}\leq d$, we observe that it must be asymptotically proximal and (iv) yields that it is asymptotically Banach proximal. 
\end{proof}

Recall that by definition a mean equicontinuous factor map satisfies (ii). 

\begin{corollary}
	Any mean equicontinuous factor map satisfies (M). 
\end{corollary}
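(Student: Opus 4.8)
The claim is an immediate consequence of Proposition \ref{pro:M1characterization}, so the plan is simply to unpack the definition and quote that proposition. First I would recall that, by definition, a mean equicontinuous factor map $\pi\colon X\to Y$ has the property that for every convergent sequence in $R(\pi)$, the asymptotic Banach proximality of the sequence is \emph{equivalent} to the Banach proximality of its limit. In particular, one implication of this equivalence holds: every convergent sequence in $R(\pi)$ whose limit is Banach proximal is asymptotically Banach proximal. This is precisely statement (ii) in Proposition \ref{pro:M1characterization}.

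Then I would invoke Proposition \ref{pro:M1characterization}, which asserts that (ii) is equivalent to (M). Since $\pi$ satisfies (ii), it satisfies (M), and the proof is complete. Concretely, the one line is: \emph{By definition $\pi$ satisfies condition (ii) of Proposition \ref{pro:M1characterization}, hence it satisfies (M).}

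There is no real obstacle here; the only thing to be careful about is to note that the definition of mean equicontinuity is a two-sided equivalence, of which we only need the ``limit Banach proximal $\Rightarrow$ sequence asymptotically Banach proximal'' direction to land in (ii). The reverse direction (asymptotic Banach proximality of the sequence forces Banach proximality of the limit) is not needed for (M), though it is automatically true as well, since $D$ is lower semicontinuous would not even be required — it follows from $D$ being, up to the Weyl supremum, controlled by the metric in the limit; but none of this is relevant to the corollary, which only uses (ii).
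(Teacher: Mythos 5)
Your proof is correct and is exactly the paper's argument: the definition of a mean equicontinuous factor map gives condition (ii) of Proposition \ref{pro:M1characterization} directly, and that proposition's equivalence of (ii) with (M) finishes the proof. The closing remark about semicontinuity is muddled but, as you note yourself, irrelevant to the corollary.
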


\begin{remark}
	Note that (ii) can be reformulated as 
\begin{itemize}
\item[(ii')] Every pair in $BP(\pi)$ is a continuity point of 
$D\colon R(\pi)\to [0,\infty)$. 
\end{itemize}
	Furthermore, since $D$ only attains non-negative values, we can reformulate (ii') as the upper-semi continuity of $D\colon R(\pi)\to [0,\infty)$ in every point of $BP(\pi)$. See \cite[A.1.4]{downarowicz2011entropy} for details on upper-semi continuity. 
\end{remark}

\section{Regional Banach proximality}\label{sec:banachProximality}
	
	In analogy with the definition of regional proximality, we call a pair \emph{regionally Banach proximal}, whenever it is the limit of an asymptotically Banach proximal sequence and denote $RBP$ for the set of all regionally Banach proximal pairs. For a factor map $\pi\colon X\to Y$ we call a pair \emph{$\pi$-regionally Banach proximal}, whenever it is the limit of an asymptotically Banach proximal sequence in $R(\pi)$ and denote $RBP(\pi)$ for the set of all $\pi$-regionally Banach proximal pairs. Clearly also $RBP$ and $RBP(\pi)$ do not depend on the choice of a continuous metric on $X$. 
	Note that	
	 $BP\subseteq RBP\subseteq RP$ and that $BP(\pi)\subseteq RBP(\pi)\subseteq RP(\pi)\subseteq R(\pi)$. 
	 
	 \begin{remark}\label{rem:RPisRBPunderM1}
	From the equivalence of (M) and (iv) in Proposition \ref{pro:M1characterization} we observe that any factor map $\pi$ with the property (M) satisfies $RP(\pi)=RBP(\pi)$. 
\end{remark}
	
\begin{proposition}\label{pro:propertiesRBPandBP}
\begin{itemize}
\item[(i)] $BP(\pi)$ is an invariant equivalence relation.
\item[(ii)] $RBP(\pi)$ is closed, invariant, reflexive and symmetric.
\end{itemize}
	
\end{proposition}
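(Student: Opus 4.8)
The plan is to reduce (i) to the analogous statement about $BP$, and to prove (ii) directly from the definition, the only genuinely substantial point being the closedness of $RBP(\pi)$.

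For (i) I would first record that the Weyl pseudometric is $G$-invariant: in the reformulation $D(x,x')=\limsup_{n\to\infty}\sup_{g'\in G}\frac{1}{|F_n|}\sum_{g\in F_n}d(gg'.x,gg'.x')$ recalled in the introduction, replacing $(x,x')$ by $(g_0.x,g_0.x')$ merely reindexes the inner supremum by the bijection $g'\mapsto g'g_0$ of $G$, so $D(g_0.x,g_0.x')=D(x,x')$. Together with the pseudometric axioms for $D$ (which give $D(x,x)=0$, symmetry, and $D(x,x'')\le D(x,x')+D(x',x'')$) this shows that $BP=\{(x,x'):D(x,x')=0\}$ is a $G$-invariant equivalence relation. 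Since $R(\pi)=\hat\pi^{-1}(\Delta_Y)$ is the preimage of the equivalence relation $\Delta_Y$ under the equivariant map $\hat\pi$, it too is a $G$-invariant equivalence relation, and hence so is the intersection $BP(\pi)=BP\cap R(\pi)$.

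For (ii), reflexivity, symmetry and invariance are obtained by feeding suitable sequences into the definition. The constant sequence with value $(x,x)$ lies in $R(\pi)$ and is asymptotically Banach proximal, so $\Delta_X\subseteq RBP(\pi)$; if a sequence $(x_n,x_n')$ in $R(\pi)$ witnesses $(x,x')\in RBP(\pi)$, then $(x_n',x_n)$ witnesses $(x',x)\in RBP(\pi)$ by symmetry of $D$ and of $R(\pi)$, while $(g.x_n,g.x_n')$ witnesses $(g.x,g.x')\in RBP(\pi)$ by $G$-invariance of $D$ and of $R(\pi)$ together with continuity of the action. The remaining, and main, point is that $RBP(\pi)$ is closed. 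I would first record the reformulation that $(x,x')\in RBP(\pi)$ if and only if for every $\epsilon>0$ there exists $(y,y')\in R(\pi)$ with $D(y,y')<\epsilon$, $d(x,y)<\epsilon$ and $d(x',y')<\epsilon$; the forward direction follows by taking a sufficiently late term of a witnessing sequence, and conversely, choosing such $(y_n,y_n')$ for $\epsilon=1/n$ produces an asymptotically Banach proximal sequence in $R(\pi)$ converging to $(x,x')$. Given this, closedness is a routine triangle-inequality argument: if $(x^{(k)},x'^{(k)})\in RBP(\pi)$ converges to $(x,x')$ and $\epsilon>0$, pick $k$ with $d(x^{(k)},x)<\epsilon/2$ and $d(x'^{(k)},x')<\epsilon/2$, then pick $(y,y')\in R(\pi)$ with $D(y,y')<\epsilon$, $d(y,x^{(k)})<\epsilon/2$ and $d(y',x'^{(k)})<\epsilon/2$; this $(y,y')$ certifies the reformulated condition for $(x,x')$ at scale $\epsilon$.

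I do not anticipate a serious obstacle here: the one thing to handle with care is that $RBP(\pi)$ is defined through sequences, so instead of juggling sequences of sequences one should pass through the ``for every $\epsilon$ there is a nearby low-$D$ pair'' reformulation above (equivalently, an explicit diagonal extraction). I would also point out explicitly that transitivity of $RBP(\pi)$ is deliberately not asserted, since it typically fails without additional hypotheses such as mean equicontinuity (cf.\ Theorem \ref{the:MEimpliesRPisRBPisPisBPisICER}).
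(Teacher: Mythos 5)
Your proposal is correct and follows essentially the same route as the paper: part (i) rests on $D$ being an invariant pseudometric, and for part (ii) your $\epsilon$-reformulation of membership in $RBP(\pi)$ is just a repackaging of the paper's diagonal extraction of an asymptotically Banach proximal sequence converging to the limit point. No gaps.
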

\begin{proof}
'(i)': This follows, since $D$ is an invariant pseudometric. \\
'(ii)':	Since $D$ is a pseudometric, we easily observe that $RBP(\pi)$ is reflexive and symmetric. 
	For $(x,x')\in RBP(\pi)$ we find an asymptotically Banach proximal sequence $(x_n,x_n')$ in $R(\pi)$ with $(x_n,x_n')\to (x,x')$. For $g\in G$, we clearly have $(g.x_n,g.x_n')\to (g.x,g.x')$. Since $D$ is invariant, $(g.x_n,g.x_n')$ is also asymptotically Banach proximal and we observe $(g.x,g.x')\in RBP(\pi)$. This shows $RBP(\pi)$ to be invariant. 
	
	Let now $(x,x')\in \overline{RBP(\pi)}$ and denote $d'$ for a continuous pseudometric on $X^2$. 
	For $n\in \mathbb{N}$ there exists $(y_n,y_n')\in RBP(\pi)$ with $d'((x,x'),(y_n,y_n'))<1/n$. 	
	Since $(y_n,y_n')\in RBP(\pi)$ there exists $(x_n,x_n')\in R(\pi)$, such that $D(x_n,x_n')<1/n$ and $d'((x_n,x_n'),(y_n,y_n'))<1/n$. We thus observe that $(x_n,x_n')$ is an asymptotically Banach proximal sequence in $R(\pi)$ that converges to $(x,x')$. Thus $(x,x')\in RBP(\pi)$, which shows $RBP(\pi)$ to be closed.  
\end{proof}

\begin{remark}
	Note that any action allows for a maximal mean equicontinuous factor \cite[Theorem 3.10.]{li2015mean}. 
	For an action the smallest invariant closed equivalence relation (\emph{icer}) that contains $RP$ defines it's maximal equicontinuous factor \cite{auslander1988minimal}.	
	It is natural to ask, whether the smallest icer that contains $RBP$ defines the maximal mean equicontinuous factor. This is not the case.
	 
	It was shown in \cite[Theorem 3.5.]{li2015mean} that for mean equicontinuous actions, we have $RP=BP$. 
	Since $BP\subseteq RBP\subseteq RP$, we observe that $BP=RBP=RP$ to be an icer. 
	Thus for mean equicontinuous actions the smallest icer that contains $RBP$ gives the maximal equicontinuous factor and not necessarily the action itself. See \cite[Chapter 6]{qiu2020note} for a characterization of the relation of the maximal mean equicontinuous factor in the context of $\mathbb{Z}$ actions. 
\end{remark}

\section{Mean equicontinuity and (M)}

	In Example \ref{exa:M2doesNotImplyM1} below we will see that (M) is not enough to ensure that $BP(\pi)=RP(\pi)$. This motivated us to use a stronger condition than (M) for our definition of mean equicontinuity for factor maps. 

\begin{proposition}\label{pro:characterizationM2}
	Consider a factor map $\pi$. 
The following statements are equivalent. 
\begin{itemize}
\item[(i)] $\pi$ is mean equicontinuous. 
\item[(ii)] $\pi$ satisfies (M) and $RBP(\pi)=BP(\pi)$. 
\item[(iii)] $\pi$ satisfies (M) and $RP(\pi)=BP(\pi)$. 
\end{itemize}
\end{proposition}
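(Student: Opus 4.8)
The plan is to unwind the definition of a mean equicontinuous factor map into two separate implications about convergent sequences in $R(\pi)$ and match each implication with one of the two ingredients appearing in (ii) and (iii). Recall that $\pi$ is mean equicontinuous precisely when, for every convergent sequence $(x_n,x_n')$ in $R(\pi)$ with limit $(x,x')$, the asymptotic Banach proximality of $(x_n,x_n')$ is equivalent to the Banach proximality of $(x,x')$. I would split this biconditional into:
\begin{itemize}
\item[(a)] if $(x,x')$ is Banach proximal, then $(x_n,x_n')$ is asymptotically Banach proximal;
\item[(b)] if $(x_n,x_n')$ is asymptotically Banach proximal, then $(x,x')$ is Banach proximal.
\end{itemize}

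The first step is to identify (a) with (M). Statement (a) is verbatim item (ii) in Proposition \ref{pro:M1characterization}, so by that proposition (a) holds if and only if $\pi$ has property (M). The second step is to identify (b) with the equality $RBP(\pi)=BP(\pi)$. Here I would note that a convergent asymptotically Banach proximal sequence in $R(\pi)$ has its (unique) limit in $RBP(\pi)$ by the very definition of $RBP(\pi)$, and conversely every element of $RBP(\pi)$ is the limit of such a sequence; the conclusion in (b) that $(x,x')$ is Banach proximal, combined with $(x,x')\in R(\pi)$, says exactly $(x,x')\in BP(\pi)$. Hence (b) is equivalent to the inclusion $RBP(\pi)\subseteq BP(\pi)$, which, since $BP(\pi)\subseteq RBP(\pi)$ always holds (as recorded in Section \ref{sec:banachProximality}), is equivalent to $RBP(\pi)=BP(\pi)$. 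Combining the two steps yields the equivalence of (i) and (ii).

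For the equivalence of (ii) and (iii) I would simply invoke Remark \ref{rem:RPisRBPunderM1}: assuming (M), one has $RP(\pi)=RBP(\pi)$, so under (M) the condition $RBP(\pi)=BP(\pi)$ holds if and only if $RP(\pi)=BP(\pi)$. This gives (ii)$\Leftrightarrow$(iii) and completes the proof. I do not expect a genuine obstacle here; the argument is bookkeeping with the definitions, and the only point that needs a moment's care is the passage between "limit of an asymptotically Banach proximal sequence in $R(\pi)$" and membership in $RBP(\pi)$, together with the observation that in the definition of mean equicontinuity the sequence is already assumed convergent, so its limit is its unique cluster point and the two notions line up cleanly.
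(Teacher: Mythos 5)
Your proposal is correct and follows essentially the same route as the paper: the equivalence of (ii) and (iii) via Remark \ref{rem:RPisRBPunderM1}, the identification of the ``limit Banach proximal $\Rightarrow$ sequence asymptotically Banach proximal'' half with (M) through Proposition \ref{pro:M1characterization}, and the identification of the converse half with the inclusion $RBP(\pi)\subseteq BP(\pi)$ (using that $R(\pi)$ is closed so the limit lies in $R(\pi)$). The only cosmetic difference is that you phrase the argument as a clean biconditional split into (a) and (b), whereas the paper runs the two implications (i)$\Rightarrow$(ii) and (ii)$\Rightarrow$(i) separately with the same ingredients.
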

\begin{proof}
From Remark \ref{rem:RPisRBPunderM1} we observe the equivalence of (ii) and (iii). \\
'(i)$\Rightarrow$(ii)': Recall that $BP(\pi)\subseteq RBP(\pi)$ is always satisfied. For the converse let $(x,x')\in RBP(\pi)$ and consider an asymptotically Banach proximal sequence $(x_n,x_n')$ in $R(\pi)$ that converges to $(x,x')$. 
	Since $\pi$ is assumed to be mean equicontinuous we observe $(x,x')\in BP$. Since $R(\pi)$ is closed we have $(x,x')\in BP(\pi)$. \\
'(ii)$\Rightarrow$(i)': Consider a convergent sequence $(x_n,x_n')$ in $R(\pi)$ and denote $(x,x')$ for it's limit. 
	Since $\pi$ is assumed to satisfy (M) we observe from Proposition \ref{pro:M1characterization} that whenever $(x,x')$ is Banach proximal, then $(x_n,x_n')$ is asymptotically Banach poximal. For the converse assume that $(x_n,x_n')$ is asymptotically Banach poximal. Thus $(x,x')\in RBP(\pi)=BP(\pi)$ and hence $(x,x')$ is Banach proximal.  
\end{proof}

In \cite[Theorem 3.5]{li2015mean} and \cite[Chapter 3]{fuhrmann2022structure} it was shown that for mean equicontinuous actions we have $BP=P=RP$. We next show that with our methods we can achieve a generalization of this result to the relative context.
 
 \begin{theorem}\label{the:MEimpliesRPisRBPisPisBPisICER}
For any mean equicontinuous factor map $\pi$ we have that 
\[RP(\pi)=RBP(\pi)=P(\pi)=BP(\pi)\] is a closed invariant equivalence relation. 
\end{theorem}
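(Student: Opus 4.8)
The plan is to observe that almost all of the work has already been carried out in Propositions \ref{pro:characterizationM2} and \ref{pro:propertiesRBPandBP}, so that the theorem follows by assembling these pieces together with the elementary inclusions recorded in Section \ref{subsec:prelims_besicovitchAndWeyl}.

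First I would invoke Proposition \ref{pro:characterizationM2}: since $\pi$ is mean equicontinuous, it satisfies (M) and we have both $RBP(\pi)=BP(\pi)$ and $RP(\pi)=BP(\pi)$. This already yields the chain of equalities $RP(\pi)=RBP(\pi)=BP(\pi)$. Next I would fold $P(\pi)$ into this chain using the unconditional inclusions $BP(\pi)\subseteq P(\pi)\subseteq RP(\pi)$: since the two outer relations now coincide, $P(\pi)$ is squeezed between equal sets, whence $P(\pi)=BP(\pi)$. This establishes $RP(\pi)=RBP(\pi)=P(\pi)=BP(\pi)$.

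Finally, for the structural statement I would appeal to Proposition \ref{pro:propertiesRBPandBP}: part (i) gives that $BP(\pi)$ is an invariant equivalence relation, and part (ii) gives that $RBP(\pi)$ is closed. Since these two relations are equal by the above, the common relation $RP(\pi)=RBP(\pi)=P(\pi)=BP(\pi)$ is simultaneously closed, invariant, and an equivalence relation, which is exactly the assertion.

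I do not expect a genuine obstacle at this stage; the substance of the argument has been front-loaded into the characterization "$\pi$ mean equicontinuous $\iff$ (M) and $RP(\pi)=BP(\pi)$" (Proposition \ref{pro:characterizationM2}) and into the closedness of $RBP(\pi)$ (Proposition \ref{pro:propertiesRBPandBP}). The only point requiring a moment of care is to make sure the squeeze for $P(\pi)$ is applied with precisely the inclusions $BP(\pi)\subseteq P(\pi)\subseteq RP(\pi)$, which hold for arbitrary factor maps and do not require mean equicontinuity.
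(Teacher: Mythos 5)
Your proposal is correct and follows essentially the same route as the paper: Proposition \ref{pro:characterizationM2} gives $BP(\pi)=RBP(\pi)=RP(\pi)$, the inclusions $BP(\pi)\subseteq P(\pi)\subseteq RP(\pi)$ (coming from $\check{d}\leq D$) squeeze in $P(\pi)$, and Proposition \ref{pro:propertiesRBPandBP} supplies the closed invariant equivalence relation structure. No gaps.
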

\begin{proof}
	From Proposition \ref{pro:characterizationM2} we observe that $BP(\pi)=RBP(\pi)=RP(\pi)$. Thus Proposition \ref{pro:propertiesRBPandBP} yields that this relation is a closed invariant equivalence relation. Since $\check{d}\leq D$, we have $BP(\pi)\subseteq P(\pi)\subseteq RP(\pi)$. 
\end{proof}

	It is well known that images of (regional) proximal pairs under factor maps are (regional) proximal. A similar statement also holds in the context of Banach proximality as we will see next.  

\begin{proposition}\label{pro:pushingDownBPandaBP}
	Consider a factor map $\pi\colon X\to Y$. 
	If $(x,x')\in X^2$ is Banach proximal, then also $(\pi(x),\pi(x'))$ is Banach proximal. If $(x_n,x_n')$ is an  asymptotically Banach proximal sequence in $X^2$, then also $(\pi(x_n),\pi(x_n'))$ is asymptotically Banach proximal. If $(x,x')$ is regionally Banach proximal, then also $(\pi(x),\pi(x'))$ is regionally Banach proximal. 
\end{proposition}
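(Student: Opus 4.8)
The plan is to reduce all three assertions to one uniform comparison between the Weyl pseudometric on $Y$ and the Weyl pseudometric on $X$; I will write $D_X$ and $D_Y$ for these and $d$, $d_Y$ for the underlying metrics. Since $Y$ is compact, $M:=\sup_{y,y'}d_Y(y,y')<\infty$, and since $X$ is compact and $\pi$ is continuous, $\pi$ is uniformly continuous, so for every $\epsilon>0$ there is $\delta>0$ with $d(a,b)<\delta\Rightarrow d_Y(\pi(a),\pi(b))<\epsilon$. I will fix one F\o lner sequence $(F_n)$ and use the identity $D(z,z')=\limsup_{n\to\infty}\sup_{g'\in G}\frac{1}{|F_n|}\sum_{g\in F_n}d(gg'.z,gg'.z')$ from \cite[Lemma 7]{lkacka2018quasi} simultaneously for the actions on $X$ and on $Y$.

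The key step is the estimate
\[ D_Y(\pi(x),\pi(x'))\ \le\ \epsilon+\frac{M}{\delta}\,D_X(x,x')\qquad\text{for all }x,x'\in X. \]
To prove it, fix $n$ and $g'\in G$ and split $F_n$ according to whether $d(gg'.x,gg'.x')<\delta$ or not. On the first part each summand $d_Y(gg'.\pi(x),gg'.\pi(x'))=d_Y(\pi(gg'.x),\pi(gg'.x'))$ is $<\epsilon$ by uniform continuity (using equivariance of $\pi$), on the second part each summand is $\le M$, and the size of the second part is controlled by Chebyshev's inequality, $|\{g\in F_n: d(gg'.x,gg'.x')\ge\delta\}|\le\delta^{-1}\sum_{g\in F_n}d(gg'.x,gg'.x')$. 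This gives $\frac{1}{|F_n|}\sum_{g\in F_n}d_Y(gg'.\pi(x),gg'.\pi(x'))\le\epsilon+\frac{M}{\delta}\cdot\frac{1}{|F_n|}\sum_{g\in F_n}d(gg'.x,gg'.x')$; taking $\sup_{g'}$ and then $\limsup_n$ on both sides and invoking the above identity yields the displayed inequality.

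The three statements now follow quickly. If $D_X(x,x')=0$, the estimate gives $D_Y(\pi(x),\pi(x'))\le\epsilon$ for every $\epsilon>0$, hence $(\pi(x),\pi(x'))\in BP$. If $(x_n,x_n')$ is asymptotically Banach proximal in $X^2$, then for a fixed $\epsilon$ with its $\delta$ we get $\limsup_n D_Y(\pi(x_n),\pi(x_n'))\le\epsilon$ from the estimate, and letting $\epsilon\to 0$ shows $(\pi(x_n),\pi(x_n'))$ to be asymptotically Banach proximal. Finally, if $(x,x')\in RBP$, choose an asymptotically Banach proximal sequence $(x_n,x_n')$ in $X^2$ with $(x_n,x_n')\to(x,x')$; its image is asymptotically Banach proximal by the previous point and converges to $(\pi(x),\pi(x'))$ by continuity of $\pi$, so $(\pi(x),\pi(x'))\in RBP$.

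The point requiring care — more a bookkeeping subtlety than a genuine obstacle — is that there is no honest Lipschitz bound $D_Y\le C\,D_X$: the factor $M/\delta$ blows up as $\epsilon\to 0$, so the order of the limits must be respected (fix $\epsilon$, choose $\delta(\epsilon)$, send $n\to\infty$, and only then send $\epsilon\to 0$), and one must check that $\sup_{g'}$ and $\limsup_n$ interact correctly, which is exactly what the reformulation of $D$ in \cite[Lemma 7]{lkacka2018quasi} provides. Alternatively the whole argument can be phrased intrinsically on $X$ by observing that $\rho:=d_Y\circ\hat\pi$ is a continuous pseudometric on $X$ with $D_X^{\rho}(x,x')=D_Y(\pi(x),\pi(x'))$, so the estimate simply compares $D_X^{\rho}$ with $D_X$.
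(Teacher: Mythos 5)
Your argument is correct, but it takes a genuinely different route from the paper's. The paper exploits the fact that Banach proximality, asymptotic Banach proximality and regional Banach proximality do not depend on the choice of continuous metric on $X$: it replaces $d_X$ by the equivalent continuous metric $d_X+d_Y\circ\hat{\pi}$, so that $d_Y\circ\hat{\pi}\le d_X$ holds pointwise, whence $D_Y\circ\hat{\pi}\le D_X$ term by term in the Weyl averages and all three claims are immediate. You instead keep the original metrics and prove the quantitative bound $D_Y(\pi(x),\pi(x'))\le\epsilon+\frac{M}{\delta(\epsilon)}\,D_X(x,x')$ via uniform continuity of $\pi$ and Markov's inequality applied to each F\o lner average, using the $\sup_{g'}$-form of $D$ from \cite[Lemma 7]{lkacka2018quasi} so that the inequality survives both $\sup_{g'}$ and $\limsup_n$; the order of limits is handled correctly (fix $\epsilon$, then $n\to\infty$, then $\epsilon\to 0$). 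Both proofs are sound. The paper's trick is shorter and hides all the analysis in the metric-independence remark, while yours is self-contained, avoids changing the metric, and produces an explicit modulus relating $D_Y\circ\hat{\pi}$ to $D_X$ --- essentially the same Markov-type splitting that underlies the equivalence between the $D(x,x')=0$ formulation of Banach proximality and the Banach-density-one formulation. Your closing observation about $\rho:=d_Y\circ\hat{\pi}$ being a continuous pseudometric with $D_X^{\rho}=D_Y\circ\hat{\pi}$ is exactly the paper's starting point; pushing it one step further by noting that $d_X+\rho$ is again a continuous metric inducing the same notions makes the $\epsilon$--$\delta$ bookkeeping unnecessary.
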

\begin{proof}
Denote $d_X$ and $d_Y$ for continuous metrics on $X$ and $Y$, respectively. 
	Clearly $d_X+d_Y\circ\hat{\pi}$ is a continuous metric as the sum of a continuous metric and a continuous pseudometric.
	We thus assume w.l.o.g.\ that $d_Y\circ\hat{\pi}\leq d_X$. We observe that the corresponding Weyl pseudometrics also satisfy $D_Y\circ \hat{\pi}\leq D_X$. 
	From this observation the statement follows easily. 
\end{proof}

\section{Examples}\label{sec:examples}

The following example shows that there exist equicontinuous factor maps with $D\notin C(R(\pi))$. 

\begin{example}\label{exa:Dnotcontinuous}
	Let $Y:=[0,1]$. For $y\in Y$ we denote $\hat{y}:=(y,y)$ and $\check{y}:=(y,-y)$. Equip 
	$X:=\{\hat{y};\, y\in Y\}\cup \{\check{y};\, y\in Y\}$ with the euclidean metric of $\mathbb{R}^2$ and denote $\pi\colon X\to Y$ for the projection $(y,\pm y)\mapsto y$. 
		
	Now consider a homeomorphism $S\colon Y\to Y$ that fixes exactly the points of $N:=\{\frac{1}{n};\, n\in \mathbb{N}\}\cup \{0\}$ and moves all other points to the left, i.e.\ which satisfies $Sy<y$ for all $y\in Y\setminus N$.
	On $X$, we consider the action $T(y,\pm y):=(Sy,\pm Sy)$ and observe $\pi$ to be a factor map. 
	Consider $y\in Y\setminus N$. We observe that there is $n\in \mathbb{N}$, with $y\in (\frac{1}{n+1},\frac{1}{n})$. Clearly for all $k\in \mathbb{Z}$, we have $d(T^k\hat{y},T^k\check{y})\in (\frac{2}{n+1},\frac{2}{n})$. It is straight forward to observe that $\hat{d}(\hat{y},\check{y})=\frac{2}{n}$. This shows that that for any $m\in \mathbb{N}$ all $(x,x')\in R(\pi)$ with $d(x,x')<\frac{2}{m}$ satisfy $\hat{d}(x,x')\leq \frac{2}{m}$, which shows $\pi$ to be an equicontinuous extension. 	
	Furthermore, considering the F\o lner sequence $\mathcal{F}=(F_k)$ with $F_k:=\{-k,\dots,0\}$, we observe $D(\check{y},\hat{y})=\frac{2}{n}$ for $y\in (\frac{1}{n+1},\frac{1}{n})$. From here it is straight forward to see that both $D$ is not continuous.
\end{example}

	The following example gives a factor map $\pi$ that satisfies (M), but which is not mean equicontinuous. It satisfies 
	$BP(\pi)=P(\pi)\subsetneq RBP(\pi)=RP(\pi)=R(\pi)$. 
	
\begin{example}\label{exa:M2doesNotImplyM1}
	We next consider $X$ as in \cite[Example 5.3]{downarowicz2021all}. We include a short introduction of this example for the convenience of the reader.  
	Consider $K:=\{z\in \mathbb{R}^2;\, d((0,1),z)=1\}$  and note that $(0,0)\in K$. Denote 
	$C_k:=K\times \{1/k\}$ and $C:=K\times \{0\}$. Consider $X:=C\cup \bigcup_{k\geq 1}C_k$ with the topology induced by $\mathbb{R}^3$. Denote $c_k:=(0,0,1/k)$ and $c:=(0,0,0)$. 
	Consider an action of $\mathbb{Z}$ on $X$ that leaves each $C_k$ invariant, fixes all $c_k$ as well as $c$ and moves all other points clockwise, but with the speed of the movement decaying as $k$ increases, so that on $C$ we have the identity map. 
	
	Consider the factor map $\pi\colon X\to Y:=\{0\}\cup \{1/k;\, k\in \mathbb{N}\}$, where we impose on $Y$ the subspace topology of $\mathbb{R}$ and act trivially on $Y$.
	Denote $d$ for the metric on $X$. 
	Since $C=\pi^{-1}(0)$ is fixed, we observe that $D=d$ on $C$. Furthermore, it is straight forward to observe that $D\equiv 0$ on each of the $C_k$. 
	From Proposition \ref{pro:M1characterization} it is easy to observe that $\pi$ satisfies (M). 
	Clearly $BP(\pi)=\bigcup_{k\in \mathbb{N}} C_k^2$ and $RBP(\pi)=R(\pi)$. Thus by Proposition \ref{pro:characterizationM2} $\pi$ is not mean equicontinuous.
	Note that $RBP(\pi)\subseteq RP(\pi)\subseteq R(\pi)$ yields that $RBP(\pi)=RP(\pi)$.  
	Furthermore, note that $BP(\pi)=P(\pi)$ in this example. 
\end{example}

\section{Some useful tools concerning topo-isomorphism}
\label{sec:toolsTopoIsom}

	Recall that the \emph{(closed) unit ball} of $C(X)$ is given by $\{f\in C(X);\, \|f\|_\infty\leq 1\}$. 
	
\begin{lemma}
\label{lem:functionApproximationTopoIsomorphy}
	Let $\pi\colon X\to Y$ be a topo-isomorphic factor map and $f$ in the unit ball of $C(X)$. For any $\epsilon>0$ and any $\mu\in \mathcal{M}_G(X)$ there exists $h$ in the unit ball of $C(Y)$ such that $\|f-h\circ\pi\|_{L^1(\mu)}<\epsilon$. 
\end{lemma}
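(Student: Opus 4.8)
The plan is to use the defining property of a topo-isomorphic factor map together with a density argument in $L^1(\mu)$. Fix $\mu \in \mathcal{M}_G(X)$, let $\nu := \pi_*\mu \in \mathcal{M}_G(Y)$, and pick $M \subseteq Y$ Borel with $\nu(M) = \mu(\pi^{-1}(M)) = 1$ such that $\pi$ restricts to a bi-measurable bijection $\pi^{-1}(M) \to M$; write $\sigma\colon M \to \pi^{-1}(M)$ for its inverse. Given $f$ in the unit ball of $C(X)$, the composition $g := f \circ \sigma$ is a Borel function on $M$ with $\|g\|_\infty \le 1$, and by construction $g \circ \pi = f$ holds $\mu$-almost everywhere on $\pi^{-1}(M)$, hence $\mu$-a.e. on $X$. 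So as an $L^1(\nu)$-element, $g$ is a Borel (not necessarily continuous) function with $\|g\circ\pi - f\|_{L^1(\mu)} = 0$.

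**The approximation step.** The remaining task is to replace the merely Borel function $g$ by a \emph{continuous} $h$ on $Y$ with $\|h\|_\infty \le 1$ and $\|h \circ \pi - f\|_{L^1(\mu)} < \epsilon$. Since $\|g\circ\pi - f\|_{L^1(\mu)} = 0$, it suffices to find $h$ in the unit ball of $C(Y)$ with $\|h - g\|_{L^1(\nu)} < \epsilon$, because $\|h\circ\pi - g\circ\pi\|_{L^1(\mu)} = \|h - g\|_{L^1(\nu)}$ by the push-forward identity $\mu(\varphi\circ\pi) = \nu(\varphi)$. Now $C(Y)$ is dense in $L^1(\nu)$ (Borel probability measure on a compact metric space), so there is some $\tilde h \in C(Y)$ with $\|\tilde h - g\|_{L^1(\nu)} < \epsilon/3$. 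To enforce the norm bound, truncate: let $h := \max(-1, \min(1, \tilde h))$, which lies in the unit ball of $C(Y)$. Since $|g| \le 1$ $\nu$-a.e., truncation can only decrease the pointwise distance to $g$, i.e. $|h(y) - g(y)| \le |\tilde h(y) - g(y)|$ for $\nu$-a.e. $y$; hence $\|h - g\|_{L^1(\nu)} \le \|\tilde h - g\|_{L^1(\nu)} < \epsilon$. Combining, $\|f - h\circ\pi\|_{L^1(\mu)} = \|g\circ\pi - h\circ\pi\|_{L^1(\mu)} = \|g - h\|_{L^1(\nu)} < \epsilon$, as required.

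**Where the difficulty lies.** There is essentially no deep obstacle here; the content is entirely bookkeeping around the definition of topo-isomorphy. The one point that deserves care is the claim $g \circ \pi = f$ $\mu$-a.e.: this uses that $\pi|_{\pi^{-1}(M)}$ is a bijection onto $M$, so for $x \in \pi^{-1}(M)$ we have $\sigma(\pi(x)) = x$ and thus $g(\pi(x)) = f(\sigma(\pi(x))) = f(x)$, and then $\mu(\pi^{-1}(M)) = 1$ does the rest. A second minor point is measurability of $g = f\circ\sigma$ on $M$, which follows because $\sigma$ is bi-measurable. Everything else — density of $C(Y)$ in $L^1(\nu)$, the push-forward identity, stability of the unit ball under truncation — is standard. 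I would present the proof in roughly the three steps above: extract $M$ and $\sigma$, define $g$ and note $\|g\circ\pi - f\|_{L^1(\mu)} = 0$, then approximate $g$ in $L^1(\nu)$ by a truncated continuous function.
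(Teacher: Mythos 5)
Your proof is correct and follows essentially the same route as the paper: both reduce to approximating $f$ in $L^1(\mu)$ by functions of the form $h\circ\pi$ with $h\in C(Y)$ and then truncating into the unit ball, the only difference being that the paper cites the isometric isomorphism $L^1(\pi_*\mu)\cong L^1(\mu)$ from Walters while you derive the needed density directly from the bi-measurable bijection $\pi^{-1}(M)\to M$. (The paper's truncation is phrased for complex-valued functions via $z'/|z'|$; your $\max$/$\min$ version is the real-valued analogue and works for the same reason.)
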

\begin{proof}
	Note first that $C(Y)$ is dense in $L^1(\pi^*\mu)$. Since $\pi$ is a topo-isomorphy, we observe from the arguments presented in \cite[Chapter 2]{walters2000introduction} that $\pi^*\colon L^1(\pi^*\mu)\to L^1(\mu)$ with $h\mapsto h\circ\pi$ is an isometric isomorphism. Thus $\{h\circ \pi;\, h\in C(Y)\}$ is densely mapped into $L^1(\mu)$ and we observe that there exists $h'\in C(Y)$ with $\|f-h'\circ\pi\|_{L^1(\mu)}<\epsilon$. 
	
	Consider $M:=\{y\in Y;\, |h'(y)|> 1\}$ and define $h\colon Y\to \mathbb{C}$ by 
	$h(y):=h'(y)/|h'(y)|$ for $y\in M$ and $h(y):=h'(y)$ everywhere else. 
	We observe that $\|h\|_\infty\leq 1$. 
	Furthermore, since for any $z,z'\in \mathbb{C}$ with $|z|\leq 1$ and $|z'|>1$ we have that $|z-z'/|z'||\leq |z-z'|$ we observe that 
	$|f(x)-h\circ \pi(x)|\leq |f(x)-h'\circ \pi(x)|$ for all $x\in X$ and hence 
	$\|f-h\circ\pi\|_{L^1(\mu)}\leq \|f-h'\circ\pi\|_{L^1(\mu)}<\epsilon$. 
\end{proof}

	Recall that a sequence $(f_n)$ in $C(X)$ is said to \emph{separate points}, whenever for $x,x'\in X$ there exists $n$, such that $f_n(x)\neq f_n(x')$. 

\begin{lemma}\label{lem:NiceFamilyOfFunctions}
		Let $\pi\colon X\to Y$ be a topo-isomorphic factor map. There exists a point separating sequence $(f_m)_{m\in \mathbb{N}}$ in the unit ball of $C(X)$ such that for any $\epsilon>0$ and any $\mu\in \mathcal{M}_G(X)$ there exists a point separating sequence $(h_m)_{m\in \mathbb{N}}$ in the unit ball of $C(Y)$ with $\sup_m \|f_m-h_m\circ \pi\|_{L^1(\mu)}\leq \epsilon$. 
	\end{lemma}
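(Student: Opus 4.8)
The statement asks for a \emph{single} point-separating sequence $(f_m)$ in the unit ball of $C(X)$ that works simultaneously for every $\mu\in\mathcal{M}_G(X)$, in the sense that each $f_m$ can be approximated in $L^1(\mu)$ by $h_m\circ\pi$ with the approximation error controlled uniformly in $m$ (by a prescribed $\epsilon$) and with the resulting $(h_m)$ still separating points. The plan is to fix once and for all a countable point-separating family $(f_m)_{m\in\mathbb{N}}$ in the unit ball of $C(X)$ — such a family exists because $X$ is compact metrizable, so $C(X)$ is separable and one may pick a countable dense subset of the unit ball, or more concretely take $f_m(x) = d(x, x_m)$ suitably rescaled for a countable dense set $(x_m)$ in $X$ — and then, given $\epsilon>0$ and $\mu$, to feed each $f_m$ into Lemma \ref{lem:functionApproximationTopoIsomorphy} with error budget $\epsilon_m := \epsilon \cdot 2^{-m}$ (or just $\epsilon$ itself, since the statement only asks for $\sup_m\|f_m - h_m\circ\pi\|_{L^1(\mu)}\le\epsilon$) to obtain $h_m'$ in the unit ball of $C(Y)$ with $\|f_m - h_m'\circ\pi\|_{L^1(\mu)}\le\epsilon$.

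The one genuine issue is that the sequence $(h_m')$ produced this way need not separate points of $Y$: Lemma \ref{lem:functionApproximationTopoIsomorphy} only controls $L^1(\mu)$-distance, and a priori all the $h_m'$ could agree on a $\mu$-null set, or even be constant. I would fix this by a standard trick: since $Y$ is compact metrizable, choose a fixed countable point-separating sequence $(g_k)_{k\in\mathbb{N}}$ in the unit ball of $C(Y)$, and then interleave. That is, enumerate $\mathbb{N}$ so that the final list $(f_m)$ contains, among its entries, the pulled-back functions $g_k\circ\pi$ (these lie in the unit ball of $C(X)$ and, since $\pi$ is surjective, they already separate those points of $X$ that $\pi$ separates — but to separate \emph{all} of $X$ we still need the original $f_m$'s for fibres). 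So let the permanent family be a combined enumeration $(f_m)$ of the original point-separating family together with all $g_k\circ\pi$; this combined family is still countable, still in the unit ball, and still point-separating. Now, given $\epsilon$ and $\mu$: for an index $m$ where $f_m = g_k\circ\pi$, simply set $h_m := g_k$, which gives exact equality $f_m = h_m\circ\pi$, cost $0\le\epsilon$; for the remaining indices apply Lemma \ref{lem:functionApproximationTopoIsomorphy} as above. The resulting $(h_m)$ contains all of $(g_k)$ as a subsequence, hence separates points of $Y$, and satisfies $\sup_m\|f_m - h_m\circ\pi\|_{L^1(\mu)}\le\epsilon$.

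The main obstacle, then, is purely the bookkeeping of reconciling two competing demands — a fixed source family on $X$ versus a point-separating target family on $Y$ — and the resolution is to build both separation requirements into the \emph{fixed} family $(f_m)$ at the outset, exploiting that $g_k\circ\pi$ pulls back exactly (no approximation needed) so those entries contribute zero error while forcing $(h_m)\supseteq(g_k)$. One should double-check that the combined enumeration genuinely separates all of $X$: if $x\neq x'$ and $\pi(x)\neq\pi(x')$ then some $g_k\circ\pi$ separates them; if $\pi(x)=\pi(x')$ then some original $f_m$ separates them since that family was chosen point-separating on $X$. Hence $(f_m)$ is point-separating, and the proof is complete modulo writing out this interleaving explicitly. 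No estimate here is delicate; everything rests on Lemma \ref{lem:functionApproximationTopoIsomorphy} and on separability of $C(X)$ and $C(Y)$.
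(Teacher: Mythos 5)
Your proposal is correct and follows essentially the same route as the paper: the paper also fixes countable point-separating families $A\subseteq C(X)$ and $B\subseteq C(Y)$, enumerates $A\cup\{h\circ\pi;\,h\in B\}$ as the permanent sequence $(f_m)$, pulls back the $B$-entries exactly (zero error) so that $(h_m)$ automatically contains a point-separating family on $Y$, and applies Lemma \ref{lem:functionApproximationTopoIsomorphy} to the remaining entries. Your interleaving trick and the verification that the combined family still separates points of $X$ match the paper's argument precisely.
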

	\begin{proof}
	We start with a countable subsets $A\subseteq C(X)$ and $B\subseteq C(Y)$ that separate points respectively and set $C:=A\cup \{h\circ \pi;\, h\in B\}$. W.l.o.g.\ we assume $\|h\|_\infty \leq 1$ and $\|f\|_\infty\leq 1$ for all $f\in A$ and $h\in B$.
	We enumerate $C$ to get a sequence $(f_m)_{m\in \mathbb{N}}$ in $C(X)$. 
	
	Now let $\epsilon>0$ and $\mu\in \mathcal{M}_G(X)$. Consider $m\in \mathbb{N}$. If $f_m\in C\setminus A$, then it is of the form $h_m\circ \pi$ for some $h_m\in B$ and we trivially have $\|f_m-h_m\circ \pi\|_{L^1(\mu)}=0<\epsilon$. 
	If $f_m\in A$, then Lemma \ref{lem:functionApproximationTopoIsomorphy} allows to choose $h_m\in C(Y)$, such that $\|h_m\|_\infty\leq 1$ and 
$\|f_m-h_m\circ \pi\|_{L^1(\mu)}\leq \epsilon$.
	Since $B\subseteq \{h_m;\, m\in \mathbb{N}\}$ we observe that $(h_m)_{m\in \mathbb{N}}$ separates points in $Y$. 
	\end{proof}

	For a sequence $\mathfrak{f}=(f_m)_{m\in \mathbb{N}}$ in the unit ball of $C(X)$ with we define a continuous pseudometric by 
$d_\mathfrak{f}(x,x'):=\sum_{m=1}^\infty 2^{-m}|f_m(x)-f_m(x')|. $
For a F\o lner sequence $\mathcal{F}$ we denote $D_\mathfrak{f}^\mathcal{F}$ for the respective Besicovitch pseudometric and $D_\mathfrak{f}$ for the respective Weyl pseudometric.

\begin{lemma}\label{lem:controlWeylPMUnderFactorMap}
Let $X$ be an action and $\mathcal{F}$ be a F\o lner sequence in $G$. Let $x_i\in X$
	be $\mathcal{F}$-generic for $\mu_i\in \mathcal{M}_G(X)$. Denote $\mu:=\frac{1}{2}(\mu_1+\mu_2)$.
For any sequences $\mathfrak{f}=(f_m)$ and
	$\mathfrak{h}=(h_m)$ in the unit ball of $C(X)$ we have
	\[D_\mathfrak{f}^\mathcal{F}(x_1,x_2)\leq D_\mathfrak{h}^\mathcal{F}(x_1,x_2)+ 2\sum_{m=1}^\infty 2^{-m}\|f_m-h_m\|_{L^1(\mu)}.\] 
\end{lemma}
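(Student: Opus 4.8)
The plan is to reduce the statement to a pointwise triangle inequality and then push the F\o lner averages through using $\mathcal{F}$-genericity. Fix $n\in\mathbb{N}$. For every $g\in G$ and every $m\in\mathbb{N}$ the triangle inequality in $\mathbb{R}$ gives
\[|f_m(g.x_1)-f_m(g.x_2)|\le |h_m(g.x_1)-h_m(g.x_2)| + |f_m(g.x_1)-h_m(g.x_1)| + |f_m(g.x_2)-h_m(g.x_2)|.\]
Multiplying by $2^{-m}$, summing over $m\ge 1$, summing over $g\in F_n$, and dividing by $|F_n|$ yields
\[\frac{1}{|F_n|}\sum_{g\in F_n}d_\mathfrak{f}(g.x_1,g.x_2)\ \le\ \frac{1}{|F_n|}\sum_{g\in F_n}d_\mathfrak{h}(g.x_1,g.x_2)\ +\ c_n,\]
where $c_n:=\sum_{m=1}^\infty 2^{-m}\bigl(A_m^{(1)}(n)+A_m^{(2)}(n)\bigr)$ and $A_m^{(i)}(n):=\frac{1}{|F_n|}\sum_{g\in F_n}|f_m(g.x_i)-h_m(g.x_i)|$. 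All rearrangements are legitimate since the double series involved converge absolutely, being dominated by a constant multiple of $\sum_m 2^{-m}$.

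Next I would take $\limsup_{n\to\infty}$ on both sides. The left-hand side has $\limsup$ equal to $D_\mathfrak{f}^\mathcal{F}(x_1,x_2)$ and the first term on the right has $\limsup$ equal to $D_\mathfrak{h}^\mathcal{F}(x_1,x_2)$. Since I will show below that $c_n$ in fact converges, one has $\limsup_n\bigl(\tfrac{1}{|F_n|}\sum_{g\in F_n}d_\mathfrak{h}(g.x_1,g.x_2)+c_n\bigr)=D_\mathfrak{h}^\mathcal{F}(x_1,x_2)+\lim_n c_n$, so it suffices to identify $\lim_n c_n$ with $2\sum_{m=1}^\infty 2^{-m}\|f_m-h_m\|_{L^1(\mu)}$. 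For fixed $m$ and $i$, the function $\psi_m:=|f_m-h_m|$ is continuous with $\|\psi_m\|_\infty\le 2$, and because $x_i$ is $\mathcal{F}$-generic for $\mu_i$ we have $\frac{1}{|F_n|}\sum_{g\in F_n}g.\delta_{x_i}\to\mu_i$ in the weak*-topology; evaluating on $\psi_m$ gives $A_m^{(i)}(n)=\frac{1}{|F_n|}\sum_{g\in F_n}\psi_m(g.x_i)\to\int\psi_m\,d\mu_i=\|f_m-h_m\|_{L^1(\mu_i)}$ as $n\to\infty$.

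Finally, since $0\le 2^{-m}\bigl(A_m^{(1)}(n)+A_m^{(2)}(n)\bigr)\le 4\cdot 2^{-m}$ uniformly in $n$, dominated convergence for series lets me interchange the limit in $n$ with the summation over $m$:
\[\lim_{n\to\infty}c_n=\sum_{m=1}^\infty 2^{-m}\bigl(\|f_m-h_m\|_{L^1(\mu_1)}+\|f_m-h_m\|_{L^1(\mu_2)}\bigr)=2\sum_{m=1}^\infty 2^{-m}\|f_m-h_m\|_{L^1(\mu)},\]
the last equality being $\mu=\tfrac12(\mu_1+\mu_2)$. Together with the previous paragraph this gives $D_\mathfrak{f}^\mathcal{F}(x_1,x_2)\le D_\mathfrak{h}^\mathcal{F}(x_1,x_2)+2\sum_{m=1}^\infty 2^{-m}\|f_m-h_m\|_{L^1(\mu)}$, as claimed. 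The only step requiring a little care is this last interchange of limit and infinite sum; the uniform summable bound $4\cdot 2^{-m}$ makes it routine, and everything else is the triangle inequality plus the defining convergence property of $\mathcal{F}$-genericity.
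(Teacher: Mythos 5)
Your proof is correct and follows essentially the same route as the paper: the same pointwise triangle inequality $d_\mathfrak{f}\le d_\mathfrak{h}+\sum_{i}\bigl(\sum_m 2^{-m}|f_m-h_m|\bigr)\circ\pi^{(i)}$ followed by F\o lner averaging and $\mathcal{F}$-genericity. The only cosmetic difference is that the paper evaluates the limit of the error term by applying genericity once to the single continuous function $f:=\sum_m 2^{-m}|f_m-h_m|$, whereas you do it term by term in $m$ and then invoke dominated convergence for series; both are valid.
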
	
\begin{proof}
	Note that $\|f_m-h_m\|_{L^1(\mu)}\leq \|f_m-h_m\|_\infty\leq 2$ and hence that $f:=\sum_{m=1}^\infty 2^{-m}|f_m-h_m|\in C(X)$ and $2\sum_{m=1}^\infty 2^{-m}\|f_m-h_m\|_{L^1(\mu)}<\infty$. Denote $\pi^{(i)}\colon X^2\to X$ for the respective projections and $\phi:=\sum_{i=1}^2 f\circ \pi^{(i)}$.	
	For $g\in G$ we have $\phi(g.x_1,g.x_2)=\sum_{i=1}^2 f(g.x_i)=\sum_{i=1}^2 g.\delta_{x_i}(f)$. 
Since $x_i$ is $\mathcal{F}$-generic for $\mu_i$ we observe 	
\begin{align}\label{equation:ControlPhi}
\frac{1}{|F_n|}\sum_{g\in F_n}\phi(g.x_1,g.x_2)=\sum_{i=1}^2\frac{1}{|F_n|}\sum_{g\in F_n}g.\delta_{x_i}\left(f\right)\to \sum_{i=1}^2\mu_i(f).
\end{align}	
Denote $d_m(x,x'):=|f_m(x)-f_m(x')|$ and $d_m'(x,x'):=|h_m(x)-h_m(x')|$ for $x,x'\in X$. A straight forward computation reveals that we have $d_m\leq d_m'+\sum_{i=1}^2 |f_m-h_m|\circ \pi^{(i)}$. 
Note that 
\[\sum_{m=1}^\infty 2^{-m} \sum_{i=1}^2 |f_m-h_m|\circ \pi^{(i)}=\sum_{i=1}^2  f\circ \pi^{(i)}=\phi.\] 
	This allows to compute 
	$d_\mathfrak{f}
= \sum_{m=1}^\infty 2^{-m}d_m
\leq \sum_{m=1}^\infty 2^{-m} d_m' + \phi
=d_\mathfrak{h}+\phi$.
	From (\ref{equation:ControlPhi}) we thus observe
\begin{align*}
D_\mathfrak{f}^\mathcal{F}(x_1,x_2)
&=\limsup_{n\to \infty}\frac{1}{|F_n|}\sum_{g\in F_n}d_\mathfrak{f}(g.x_1,g.x_2)\\
&\leq\limsup_{n\to \infty}\frac{1}{|F_n|}\sum_{g\in F_n}d_\mathfrak{h}(g.x_1,g.x_2)+\lim_{n\to \infty}\frac{1}{|F_n|}\sum_{g\in F_n}\phi(g.x_1,g.x_2)\\
&= D_\mathfrak{h}^\mathcal{F}(x_1,x_2)
	+ \sum_{i=1}^2\mu_i(f).
\end{align*}
From
\[\mu_1(f)+\mu_2(f)\leq 2\mu(|f|)=2\|f\|_{L^1(\mu)}\leq 2\sum_{m=1}^\infty 2^{-m}\|f_m-h_m\|_{L^1(\mu)}\]
we thus observe 
$D_\mathfrak{f}^\mathcal{F}(x_1,x_2)\leq D_\mathfrak{h}^\mathcal{F}(x_1,x_2)+2\sum_{m=1}^\infty 2^{-m}\|f_m-h_m\|_{L^1(\mu)}$.
\end{proof}

\section{Characterizations of topo-isomorphy}
\label{sec:characterizationsTopoIsomorphism}

	Recall that we define a factor map $\pi\colon X\to Y$ to be \emph{Banach proximal}, whenever all pairs $(x,x')\in R(\pi)$ are Banach proximal. Clearly Banach proximal factor maps are mean equicontinuous. 
	From \cite{fuhrmann2022structure, qiu2020note} we know that a factor map $\pi$ onto an equicontinuous action $Y$ is topo-isomorphic, if and only if it is Banach proximal. We have the following.  

\begin{theorem}\label{the:topo_iso_and_Banach_prox}
	For any factor map $\pi\colon X\to Y$ the following statements are equivalent. 
\begin{itemize}
\item[(i)] $\pi$ is topo-isomorphic.
\item[(ii)] $\pi$ is Banach proximal.
\item[(iii)] For any $(x,x')\in X^2$ we have that $(x,x')$ is Banach proximal, whenever $(\pi(x),\pi(x'))$ is Banach proximal. 
\end{itemize}
\end{theorem}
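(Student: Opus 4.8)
The plan is to prove the cycle $(iii)\Rightarrow(ii)\Rightarrow(i)\Rightarrow(iii)$, which gives the full equivalence. The implication $(iii)\Rightarrow(ii)$ is immediate: if $(x,x')\in R(\pi)$ then $(\pi(x),\pi(x'))\in\Delta_Y\subseteq BP$, so $(x,x')\in BP$ by $(iii)$; hence $R(\pi)\subseteq BP$, i.e.\ $\pi$ is Banach proximal.

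For $(ii)\Rightarrow(i)$ I would fix $\mu\in\mathcal{M}_G(X)$, set $\nu:=\pi_*\mu$, disintegrate $\mu=\int_Y\mu_y\,d\nu(y)$ over $\pi$ (so $\mu_y(\pi^{-1}(y))=1$ and $g_*\mu_y=\mu_{g.y}$ for $\nu$-a.e.\ $y$), and consider the relative self-product $\tilde\mu:=\int_Y\mu_y\times\mu_y\,d\nu(y)$, which is an invariant Borel probability measure on $X^2$ (invariance follows from the equivariance of the disintegration) carried by the compact set $R(\pi)$. Fixing a F\o lner sequence $(F_n)$, the functions $\phi_n(a,b):=\frac{1}{|F_n|}\sum_{g\in F_n}d(g.a,g.b)$ are continuous on $R(\pi)$ and bounded by $\operatorname{diam}(X)$; by $(ii)$ every $(a,b)\in R(\pi)$ has $D(a,b)=0$, so, the summands being non-negative, $\phi_n(a,b)\to0$ pointwise on $R(\pi)$, and dominated convergence yields $\int\phi_n\,d\tilde\mu\to0$. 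Invariance of $\tilde\mu$ forces $\int\phi_n\,d\tilde\mu=\int d\,d\tilde\mu$ for every $n$, hence $\int d\,d\tilde\mu=0$ and $\tilde\mu$ is carried by $\Delta_X$. Consequently $\mu_y$ is a Dirac mass for $\nu$-a.e.\ $y$; writing $\mu_y=\delta_{s(y)}$, correcting $s$ on a null set and applying the Lusin--Souslin theorem to the image of $s$, one extracts the Borel set $M\subseteq Y$ and the bi-measurable bijection required by the definition of topo-isomorphy. This final passage from ``Dirac fibre measures'' to the formal definition is routine measure theory (cf.\ \cite[Chapter 2]{walters2000introduction}, and the corresponding arguments in \cite[Chapter 3]{fuhrmann2022structure}, \cite[Theorem 4.3]{qiu2020note}), but it is the fiddliest part of this implication.

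For $(i)\Rightarrow(iii)$, let $(x,x')\in X^2$ with $(\pi(x),\pi(x'))\in BP$, and apply Lemma \ref{lem:NiceFamilyOfFunctions} to get a point-separating $\mathfrak{f}=(f_m)$ in the unit ball of $C(X)$. Since $\mathfrak{f}$ separates points, $d_{\mathfrak{f}}$ is a continuous metric on $X$, so, $BP$ being metric-independent, it suffices to prove $D_{\mathfrak{f}}^{\mathcal{F}}(x,x')=0$ for every F\o lner sequence $\mathcal{F}$. Fix $\mathcal{F}$. Passing first to a subsequence along which the $\limsup$ defining $D_{\mathfrak{f}}^{\mathcal{F}}(x,x')$ is attained, and then to a further subsequence $\mathcal{F}'$ along which $\frac{1}{|F_n|}\sum_{g}g.\delta_{x}$ and $\frac{1}{|F_n|}\sum_{g}g.\delta_{x'}$ converge to some $\mu_1,\mu_2\in\mathcal{M}_G(X)$, we arrange that $x$ is $\mathcal{F}'$-generic for $\mu_1$, that $x'$ is $\mathcal{F}'$-generic for $\mu_2$, and that $D_{\mathfrak{f}}^{\mathcal{F}'}(x,x')=D_{\mathfrak{f}}^{\mathcal{F}}(x,x')$. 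Now let $\epsilon>0$, put $\mu:=\tfrac12(\mu_1+\mu_2)$, and use Lemma \ref{lem:NiceFamilyOfFunctions} to pick $\mathfrak{h}_0=(h_m)$ in the unit ball of $C(Y)$ with $\sup_m\|f_m-h_m\circ\pi\|_{L^1(\mu)}\le\epsilon$. Applying Lemma \ref{lem:controlWeylPMUnderFactorMap} to the families $\mathfrak{f}$ and $(h_m\circ\pi)_m$ at the points $x,x'$ gives
\[D_{\mathfrak{f}}^{\mathcal{F}'}(x,x')\ \le\ D_{(h_m\circ\pi)_m}^{\mathcal{F}'}(x,x')+2\sum_{m\ge1}2^{-m}\|f_m-h_m\circ\pi\|_{L^1(\mu)}\ \le\ D_{(h_m\circ\pi)_m}^{\mathcal{F}'}(x,x')+2\epsilon .\]
Since $d_{(h_m\circ\pi)_m}(a,b)=d_{\mathfrak{h}_0}(\pi(a),\pi(b))$, the first term on the right equals the Besicovitch pseudometric of $(\pi(x),\pi(x'))$ on $Y$ with respect to $d_{\mathfrak{h}_0}$; as $d_{\mathfrak{h}_0}$ is a continuous pseudometric it is dominated by the continuous metric $d_Y+d_{\mathfrak{h}_0}$, and since $(\pi(x),\pi(x'))\in BP$ and Banach proximality is metric-independent, $D_{d_Y+d_{\mathfrak{h}_0}}(\pi(x),\pi(x'))=0$, hence that Besicovitch pseudometric vanishes. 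Therefore $D_{\mathfrak{f}}^{\mathcal{F}}(x,x')=D_{\mathfrak{f}}^{\mathcal{F}'}(x,x')\le2\epsilon$ for every $\epsilon>0$, so $D_{\mathfrak{f}}^{\mathcal{F}}(x,x')=0$; as $\mathcal{F}$ was arbitrary, $(x,x')\in BP$, which proves $(iii)$.

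I expect the main obstacle to be the $(i)\Rightarrow(iii)$ step, and within it the double passage to subsequences that simultaneously makes $x$ and $x'$ generic for invariant measures (so that Lemma \ref{lem:controlWeylPMUnderFactorMap} is applicable) and preserves the value of $D_{\mathfrak{f}}^{\mathcal{F}}(x,x')$; once that is set up, the estimate above closes the argument. The measure-theoretic bookkeeping in $(ii)\Rightarrow(i)$ is standard but also requires some care.
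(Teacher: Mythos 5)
Your proposal is correct and follows the same overall route as the paper: the cycle through Banach proximality, the disintegration/relative-product argument for topo-isomorphy, and the approximation machinery of Lemmas \ref{lem:NiceFamilyOfFunctions} and \ref{lem:controlWeylPMUnderFactorMap} for the implication from topo-isomorphy to the lifting of Banach proximal pairs. The one place where you genuinely diverge is inside '(ii)$\Rightarrow$(i)': to show that $\mu\times_\nu\mu$ is carried by $\Delta_X$, the paper argues by contradiction via the ergodic decomposition of $\mu\times_\nu\mu$ and the existence of an $\mathcal{F}$-generic point for an ergodic component (Remark \ref{rem:genericityCreation}), whereas you integrate the Birkhoff-type averages $\phi_n(a,b)=\frac{1}{|F_n|}\sum_{g\in F_n}d(g.a,g.b)$ directly: invariance gives $\int\phi_n\,d(\mu\times_\nu\mu)=\int d\,d(\mu\times_\nu\mu)$ for every $n$, while Banach proximality plus dominated convergence forces this quantity to be $0$. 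Your variant is more elementary, avoiding both the ergodic decomposition and the genericity result, at the cost of nothing; the paper's version is the classical argument imported from \cite[Theorem 3.8]{li2015mean}. In '(i)$\Rightarrow$(iii)' you are in fact slightly more careful than the paper about the double passage to subsequences (first attaining the $\limsup$, then extracting genericity of both points) and about why $D^{\mathcal{F}}_{(h_m\circ\pi)_m}(x,x')$ vanishes; both write-ups leave the final identification of the Borel set $M$ and the bi-measurable inverse at the same level of detail.
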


\begin{corollary}\label{cor:topo_iso_is_mean_eq}
	Any topo-isomorphic factor map is mean equicontinuous. 
\end{corollary}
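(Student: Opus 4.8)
The plan is to deduce this corollary directly from Theorem \ref{the:topo_iso_and_Banach_prox} together with the elementary observation that Banach proximal factor maps are mean equicontinuous. The entire substance of the argument already resides in Theorem \ref{the:topo_iso_and_Banach_prox}; what remains is only to connect the property of being Banach proximal to the definition of mean equicontinuity, so I expect no genuine obstacle here.

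First I would invoke the implication (i)$\Rightarrow$(ii) of Theorem \ref{the:topo_iso_and_Banach_prox}: since $\pi$ is topo-isomorphic, it is Banach proximal, meaning $BP(\pi)=R(\pi)$. In particular $D(x,x')=0$ for every $(x,x')\in R(\pi)$, so the Weyl pseudometric $D$ vanishes identically on the relation $R(\pi)$.

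Next I would check the defining equivalence for mean equicontinuity against this fact. Let $(x_n,x_n')$ be any convergent sequence in $R(\pi)$ with limit $(x,x')$; recall that $R(\pi)$ is closed, so $(x,x')\in R(\pi)$ as well. Because $D\equiv 0$ on $R(\pi)$, we have $D(x_n,x_n')=0$ for every $n$, so the sequence is automatically asymptotically Banach proximal, and likewise $D(x,x')=0$, so the limit is automatically Banach proximal. Thus both sides of the required equivalence hold unconditionally, and in particular they are equivalent. This is exactly the definition of $\pi$ being mean equicontinuous, which completes the argument. (One could alternatively phrase this through Proposition \ref{pro:characterizationM2}: property (M) holds trivially since $D(x,x')<\epsilon$ for all $(x,x')\in R(\pi)$ regardless of $\delta$, and $RBP(\pi)=BP(\pi)$ holds because both equal $R(\pi)$.)

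The only point meriting care is the logical reading of the definition, namely that mean equicontinuity requires an \emph{equivalence} of two properties along convergent sequences in $R(\pi)$, so one must verify that collapsing both properties to "always true" indeed yields that equivalence rather than merely one implication. Since Theorem \ref{the:topo_iso_and_Banach_prox} is assumed available, there is no further difficulty, and the corollary follows in a few lines.
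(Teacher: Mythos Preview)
Your argument is correct and matches the paper's intended reasoning: the corollary is placed immediately after Theorem \ref{the:topo_iso_and_Banach_prox} and relies on the sentence just before that theorem, ``Clearly Banach proximal factor maps are mean equicontinuous,'' which you have simply unpacked by noting that $D\equiv 0$ on $R(\pi)$ makes both sides of the defining equivalence hold trivially. No further work is needed.
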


\begin{remark}
	Note that the argument for '(ii)$\Rightarrow$(i)' is the argument from \cite[Theorem 3.8]{li2015mean} or \cite[Theorem 3.15]{fuhrmann2022structure}, which carries over to the general setting without problems. We include the argument for the convenience of the reader. 
\end{remark}

\begin{proof}
'(i)$\Rightarrow$(iii)':  Consider a point separating sequence $\mathfrak{f}=(f_m)$ in the unit ball of $C(X)$ as constructed in  Lemma \ref{lem:NiceFamilyOfFunctions} and the continuous metric $d_\mathfrak{f}$ on $X$. 
	Let $\epsilon>0$. 
	Let $\mathcal{F}$ be a F\o lner sequence in $G$ such that $D_\mathfrak{f}^\mathcal{F}(x,x')+\epsilon\geq D_\mathfrak{f}(x,x')$. 
	By considering a subsequence and by a standard Krylov–Bogolyubov argument we can assume w.l.o.g.\ that 
	$x$ and $x'$ are $\mathcal{F}$-generic for some $\mu,\mu'\in \mathcal{M}_G(X)$, respectively. 
	We denote $\nu:=1/2(\mu+\mu')$. 
Our choice of $\mathfrak{f}$ allows to choose a point separating sequence $\mathfrak{h}=(h_m)$ in the unit ball of $C(Y)$ that satisfies 
$\sup_m\|f_m-h_m\circ \pi\|_{L^1(\nu)}\leq \epsilon$.
Denote $\mathfrak{h}':=(h_m\circ \pi)$. 
	Since $(\pi(x),\pi(x'))$ is Banach proximal and $d_\mathfrak{h}$ is a continuous metric on $Y$ we have $D_{\mathfrak{h}'}^\mathcal{F}(x,x')=D_\mathfrak{h}^\mathcal{F}(\pi(x),\pi(x'))=0$. 	
	By Lemma \ref{lem:controlWeylPMUnderFactorMap} we observe that 
\begin{align*}
	D_\mathfrak{f}^\mathcal{F}(x,x')&\leq D_{\mathfrak{h}'}^\mathcal{F}(x,x')+ 2\sum_{m=1}^\infty 2^{-m}\|f_m-h_m\circ \pi\|_{L^1(\nu)}
	\leq 0 + 2\epsilon.
\end{align*}
	We thus have $D_\mathfrak{f}(x,x')\leq 3\epsilon$ and since $\epsilon>0$ was arbitrary, we have $D_\mathfrak{f}(x,x')=0$. Since $d_\mathfrak{f}$ is a continuous metric on $X$ this shows $(x,x')$ to be Banach proximal. 	\\
'(iii)$\Rightarrow$(ii)': Trivial. \\
'(ii)$\Rightarrow$(i)': Assume that $\pi$ is Banach proximal and consider $\mu\in \mathcal{M}_G(X)$. Denote $\nu:=\pi^*\mu$. Let $\mu=\int_Y \mu_yd\nu(y)$ be the desintegration of $\mu$ over $\nu$. And consider the relative product measure  
	$\mu \times_\nu \mu:=\int_Y \mu_y\times \mu_y d\nu(y)$ on $R(\pi)$. See \cite{glasner2003ergodic} for details on these notions. $G$ acts on $R(\pi)$ by $g.(x,x'):=(g.x,g.x')$ and clearly 
	$\mu \times_\nu \mu \in \mathcal{M}_G(R(\pi))$. 
	If $\mu \times_\nu \mu$ is not supported on $\Delta_X$, there exists an open set $U\subseteq R(\pi)$ with positive distance from $\Delta_X$ and 
	satisfies $\mu \times_\nu \mu(U)>0$. 	
	By considering the ergodic decomposition of $\mu \times_\nu \mu$ there exists $\kappa\in \mathcal{M}_G^e(R(\pi))$ such that 
	$\kappa(U)>0$. 
	By Remark \ref{rem:genericityCreation} there exists $(x,x')\in R(\pi)$ and a F\o lner sequence $\mathcal{F}=(F_n)_{n\in \mathbb{N}}$ such that $(x,x')$ is $\mathcal{F}$-generic for $\kappa$ and we conclude $D(x,x')\geq \lim_{n\to \infty} 1/|F_n|\sum_{g\in F_n} d(g.(x,x'))=\int d d\kappa>0$, a contradiction to the Banach proximality of $\pi$. 
	
	We have shown that the support of $\mu \times_\nu \mu$ is contained inside $\Delta_X$. Thus $\check{Y}:=\{y\in Y;\,\exists x\in X\colon \mu_y=\delta_x\}$ satisfies $\nu(\check{Y})=1$. 
	Denote $\check{X}$ for the set of all $x\in X$ with $\{x\}$ being the support of some $\mu_y$ for $y\in Y$. Clearly $\mu(\check{X})=1$ and the restriction of $\pi\colon \check{X}\to \check{Y}$ is bijective. 
\end{proof}

	For factor maps between weakly mean equicontinuous actions we have the following.

\begin{theorem}\label{the:charTOPOISOMWeakMeanEqui}
For a factor map $\pi\colon X\to Y$ between weakly mean equicontinuous actions the following statements are equivalent.
\begin{itemize}
\item[(i)] $\pi$ is topo-isomorphic. 
\item[(ii)] For any convergent sequence $(x_n,x_n')$ in $X^2$ we have that $(x_n,x_n')$ is an asymptotically  Banach proximal sequence, whenever $(\pi(x_n),\pi(x_n'))$ is an asymptotically Banach proximal sequence. 
\item[(iii)] For any sequence $(x_n,x_n')$ in $X^2$ we have that $(x_n,x_n')$ is an asymptotically Banach proximal sequence, whenever $(\pi(x_n),\pi(x_n'))$ is an asymptotically Banach proximal sequence. 
\end{itemize}
\end{theorem}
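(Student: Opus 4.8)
The plan is to establish the cycle ``(iii)$\Rightarrow$(ii)$\Rightarrow$(i)$\Rightarrow$(iii)''. The implication ``(iii)$\Rightarrow$(ii)'' is immediate, and ``(ii)$\Rightarrow$(i)'' is a one-line reduction to Theorem~\ref{the:topo_iso_and_Banach_prox}: given $(x,x')\in X^2$ with $(\pi(x),\pi(x'))$ Banach proximal, apply (ii) to the constant sequence $(x_n,x_n'):=(x,x')$, whose image sequence is trivially asymptotically Banach proximal; then (ii) forces $D(x,x')=0$, so $(x,x')\in BP$, which is precisely condition (iii) of Theorem~\ref{the:topo_iso_and_Banach_prox}, whence $\pi$ is topo-isomorphic. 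Neither of these two implications uses weak mean equicontinuity; all the content lies in ``(i)$\Rightarrow$(iii)''.

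For ``(i)$\Rightarrow$(iii)'' I would argue by contradiction. Fix the point-separating sequence $\mathfrak{f}=(f_m)$ in the unit ball of $C(X)$ from Lemma~\ref{lem:NiceFamilyOfFunctions} and work throughout with the continuous metric $d_\mathfrak{f}$ on $X$, which is legitimate since (asymptotic) Banach proximality and the associated Weyl pseudometrics are unchanged under an equivalent continuous metric. Suppose $(x_n,x_n')$ is a sequence in $X^2$ with $(\pi(x_n),\pi(x_n'))$ asymptotically Banach proximal but $(x_n,x_n')$ not; then $D_\mathfrak{f}(x_n,x_n')\not\to 0$, so after passing to a subsequence there is $\delta>0$ with $D_\mathfrak{f}(x_n,x_n')\ge\delta$ for all $n$, and we may further assume $(x_n,x_n')\to(\bar x,\bar x')$. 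Here weak mean equicontinuity enters: each orbit closure $\overline{G.z}$ is uniquely ergodic, so $z$ is $\mathcal{F}$-generic for $\mu_z$ along \emph{every} Følner sequence, and $z\mapsto\mu_z$ is weak*-continuous, whence the measures $\mu_n:=\tfrac12(\mu_{x_n}+\mu_{x_n'})$ converge weak* to $\mu_\infty:=\tfrac12(\mu_{\bar x}+\mu_{\bar x'})$.

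Now fix $\epsilon\in(0,\delta/3)$ and apply Lemma~\ref{lem:NiceFamilyOfFunctions} with the single measure $\mu_\infty$ to obtain a point-separating sequence $\mathfrak{h}=(h_m)$ in the unit ball of $C(Y)$ with $\sup_m\|f_m-h_m\circ\pi\|_{L^1(\mu_\infty)}\le\epsilon$. Set $\mathfrak{h}':=(h_m\circ\pi)$ and $\Phi:=\sum_{m} 2^{-m}|f_m-h_m\circ\pi|\in C(X)$, so that $\mu_\infty(\Phi)\le\epsilon$ and, by weak* convergence, $\mu_n(\Phi)\to\mu_\infty(\Phi)\le\epsilon$; note that $2\mu_n(\Phi)$ is exactly the error term of Lemma~\ref{lem:controlWeylPMUnderFactorMap} for $\mathfrak{f}$ and $\mathfrak{h}'$ relative to $\mu_n$. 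For each $n$ choose a Følner sequence $\mathcal{F}_n$ with $D_\mathfrak{f}^{\mathcal{F}_n}(x_n,x_n')\ge D_\mathfrak{f}(x_n,x_n')-\epsilon\ge\delta-\epsilon$; since $x_n,x_n'$ are $\mathcal{F}_n$-generic for $\mu_{x_n},\mu_{x_n'}$, Lemma~\ref{lem:controlWeylPMUnderFactorMap} gives
\[D_\mathfrak{f}^{\mathcal{F}_n}(x_n,x_n')\le D_{\mathfrak{h}'}^{\mathcal{F}_n}(x_n,x_n')+2\mu_n(\Phi)=D_\mathfrak{h}^{\mathcal{F}_n}\big(\pi(x_n),\pi(x_n')\big)+2\mu_n(\Phi),\]
using $d_{\mathfrak{h}'}=d_\mathfrak{h}\circ\hat\pi$ and $g.\pi(z)=\pi(g.z)$. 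Since $d_\mathfrak{h}$ is a continuous metric on $Y$ and $(\pi(x_n),\pi(x_n'))$ is asymptotically Banach proximal (for one, hence every, continuous metric on $Y$), we get $D_\mathfrak{h}^{\mathcal{F}_n}(\pi(x_n),\pi(x_n'))\le D_\mathfrak{h}(\pi(x_n),\pi(x_n'))\to 0$. Letting $n\to\infty$ yields $\delta-\epsilon\le 2\epsilon$, i.e.\ $\delta\le 3\epsilon$, contradicting the choice of $\epsilon$. Hence (iii) holds.

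The main obstacle is precisely the uniformity built into this argument. Lemma~\ref{lem:NiceFamilyOfFunctions} only produces, for each \emph{single} measure, a family on $Y$ whose pullback approximates $\mathfrak{f}$; to run the contradiction one needs the \emph{same} family $\mathfrak{h}$ to control the error simultaneously along the whole sequence $(x_n,x_n')$. Weak mean equicontinuity resolves this on two fronts: continuity of $z\mapsto\mu_z$ makes the averages $\mu_n$ converge, so a single limit measure $\mu_\infty$ suffices to choose $\mathfrak{h}$; and unique ergodicity of the orbit closures makes each $x_n$ generic along the Følner sequence $\mathcal{F}_n$ that almost realizes $D_\mathfrak{f}(x_n,x_n')$, which is what makes Lemma~\ref{lem:controlWeylPMUnderFactorMap} applicable at all. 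The remaining steps are bookkeeping: packaging $\sup_m\|f_m-h_m\circ\pi\|_{L^1}$ into the single continuous function $\Phi$ so that weak* convergence of $\mu_n$ gives the error bound uniformly in $m$, and invoking metric-independence of (asymptotic) Banach proximality to move freely between $d_X$, $d_\mathfrak{f}$, $d_Y$, and $d_\mathfrak{h}$.
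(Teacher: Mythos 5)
Your proposal is correct and follows essentially the same route as the paper: the substance lies in the implication from topo-isomorphy to the sequential statement, which you establish via Lemma~\ref{lem:NiceFamilyOfFunctions} and Lemma~\ref{lem:controlWeylPMUnderFactorMap}, using weak mean equicontinuity exactly as the paper does (weak*-continuity of $x\mapsto\mu_x$ to get convergence of the averaged measures, and pointwise unique ergodicity to obtain genericity along arbitrary F\o lner sequences). The only difference is organizational: the paper proves (i)$\Rightarrow$(ii) directly for convergent sequences and then upgrades to (iii) by a compactness argument, whereas you fold the subsequence extraction into a single proof by contradiction of (i)$\Rightarrow$(iii); both are equally valid.
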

\begin{proof}
'(i)$\Rightarrow$(ii)':
Consider a point separating sequence $\mathfrak{f}=(f_m)$ in the unit ball of $C(X)$ as constructed in  Lemma \ref{lem:NiceFamilyOfFunctions} and recall that $d_\mathfrak{f}$ is a continuous metric on $X$. It thus suffices o show that $D_\mathfrak{f}(x_n,x_n')\to 0$.  

Let $\epsilon>0$ and denote $(x,x')$ for the limit of $(x_n,x_n')$ and $y:=\pi(x), y':=\pi(x'), y_n:=\pi(x_n)$, as well as $y_n':=\pi(x_n')$. Denote $\mu,\mu',\mu_n$ and $\mu_n'$ for the unique invariant Borel probability measures on the orbit closures of $x,x',x_n$ and $x_n'$, respectively. Denote $\nu_:=\frac{1}{2}(\mu+\mu')$ and $\nu_n:=\frac{1}{2}(\mu_n+\mu_n')$. Note that since $X$ is assumed to be weak mean equicontinuous, we have that $\nu_n\to \nu$. 	
	Consider a point separating	sequence $\mathfrak{h}=(h_m)$ in the unit ball of $C(Y)$ with $\sup_m\|f_m-h_m\circ \pi\|_{L^1(\nu)}\leq \epsilon$ and recall that $d_\mathfrak{h}$ is a continuous metric on $Y$.
	We have 
	$\|f_m-h_m\circ \pi\|_{L^1(\nu_n)}\leq \|f_m-h_m\circ \pi\|_\infty\leq 2$ for all $m,n\in \mathbb{N}$. Since $\nu_n\to \nu$ we observe that there exists $N\in \mathbb{N}$ such that 
\begin{align*}
2\sum_{m=1}^\infty 2^{-m}\|f_m-h_m\circ \pi\|_{L^1(\nu_n)} \leq 3\epsilon
\end{align*}
for all $n\geq N$. 
Furthermore, since $(y_n,y_n')$ is assumed to be asymptotically Banach proximal, we can choose $N$ large enough such that $D_\mathfrak{h}(y_n,y_n')\leq \epsilon$ holds for all $n\geq N$. 	

	Let $\mathcal{F}$ be a F\o lner sequence in $G$. 
	Denoting $\mathfrak{h}':=(h_m\circ \pi)$, we observe 
	$D_\mathfrak{h'}^\mathcal{F}(x_n,x_n')= D_\mathfrak{h}^\mathcal{F}(y_n,y_n')\leq D_\mathfrak{h}(y_n,y_n')\leq \epsilon$. 	
	Note that by the pointwise unique ergodicity of $X$ we have that 
	$x_n$ and $x_n'$ are $\mathcal{F}$-generic for $\mu_n$ and $\mu_n'$, respectively. 	
	From Lemma \ref{lem:controlWeylPMUnderFactorMap}
	we observe that for $n\geq N$ we have
	\begin{align*}	
	D_\mathfrak{f}^\mathcal{F}(x_n,x_n')
	&\leq D_\mathfrak{h'}^\mathcal{F}(x_n,x_n')+ 2\sum_{m=1}^\infty 2^{-m}\|f_m-h_m\circ \pi\|_{L^1(\nu_n)}
	\leq 4\epsilon.
	\end{align*}
	Since the same $N$ works for all F\o lner sequences $\mathcal{F}$ we observe $D_\mathfrak{f}(x_n,x_n')\leq 4\epsilon$ for all $n\geq N$. This shows that $(x_n,x_n')$ is an asymptotically Banach proximal sequence.  \\
'(ii)$\Rightarrow$(iii)':
If $(x_n,x_n')$ is not asymptotically Banach proximal, there exists a subsequence $(x_{n_k},x_{n_k}')$ and $\delta>0$ such that $D_X(x_{n_k},x_{n_k}')\geq \delta$ for all $k$.
	Since $X^2$ is compact we assume w.l.o.g.\ that $(x_{n_k},x_{n_k}')$ is convergent. Since clearly $(\pi(x_{n_k}),\pi(x_{n_k}'))$ is still asymptotically Banach proximal (ii) yields that $(x_{n_k},x_{n_k}')$ must be asymptotically Banach proximal, a contradiction. \\
(iii)$\Rightarrow$(i): Consider $(x,x')\in R(\pi)$ and observe that trivially $\pi(x)=\pi(x')$. Thus trivially $(\pi(x),\pi(x'))_{n\in \mathbb{N}}$ is asymptotically Banach proximal. 
From (iii) we observe that $(x,x')_{n\in \mathbb{N}}$ is asymptotically Banach proximal and hence that $(x,x')$ is Banach proximal. This shows that $\pi$ is a Banach proximal factor map and we observe (i) from Theorem \ref{the:topo_iso_and_Banach_prox}. 
\end{proof}

\section{Mean equicontinuity, distality and proximality}
\label{sec:meanEquicontinuityDistalityEquicontinuity}

\subsection{Mean equicontinuity and distality}

Recall that a factor map $\pi$ is called distal, whenever $P(\pi)=\Delta_X$.
	A factor map $\pi\colon X\to Y$ is called \emph{Banach distal}, whenever $BP(\pi)=\Delta_X$, i.e.\ whenever for distinct $x,x'\in X$ with $\pi(x)=\pi(x')$ we have $D(x,x')>0$. Since $BP(\pi)\subseteq P(\pi)$, we observe that any distal factor map is Banach distal. In Example \ref{exa:topo-isometricAfterEquicont} below, we will see that there exist Banach distal factor maps, which are not distal. 
	
	Recall that any equicontinuous action is mean equicontinuous and distal. 
	In \cite[Corollary 3.6]{li2015mean}	it is presented that mean equicontinuous and distal actions are equicontinuous. The following theorem is a generalization of this result to factor maps. 

\begin{theorem}\label{the:char_equicontinuity}
	For a  factor map $\pi$ the following statements are equivalent.
	\begin{itemize}
	\item[(i)] $\pi$ is equicontinuous.
	\item[(ii)] $\pi$ is mean equicontinuous and distal. 
	\item[(iii)] $\pi$ is mean equicontinuous and Banach distal.
	\end{itemize}
\end{theorem}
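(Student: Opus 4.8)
The plan is to establish the cycle (i)$\Rightarrow$(ii)$\Rightarrow$(iii)$\Rightarrow$(i). The first implication is essentially already in hand: any equicontinuous factor map is mean equicontinuous (this was announced as a consequence of Theorem \ref{the:char_equicontinuity} itself, but the content we need here is that an equicontinuous $\pi$ satisfies (M) and has $RP(\pi)=\Delta_X$, hence $BP(\pi)=RBP(\pi)=RP(\pi)=\Delta_X$, so Proposition \ref{pro:characterizationM2} gives mean equicontinuity), and an equicontinuous factor map is distal since $P(\pi)\subseteq RP(\pi)=\Delta_X$. The implication (ii)$\Rightarrow$(iii) is immediate from $BP(\pi)\subseteq P(\pi)$, which forces $BP(\pi)=\Delta_X$ whenever $P(\pi)=\Delta_X$.

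The substance is (iii)$\Rightarrow$(i). So assume $\pi$ is mean equicontinuous and Banach distal, i.e.\ $BP(\pi)=\Delta_X$. By Theorem \ref{the:MEimpliesRPisRBPisPisBPisICER}, mean equicontinuity gives $RP(\pi)=BP(\pi)$. Combining, $RP(\pi)=\Delta_X$. But a factor map with $RP(\pi)=\Delta_X$ is precisely an equicontinuous factor map, by the characterization recalled in the Preliminaries (\cite[Chapter 7]{auslander1988minimal}). This closes the cycle.

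So the proof is short once Theorem \ref{the:MEimpliesRPisRBPisPisBPisICER} is available, and the only place where care is needed is verifying (i)$\Rightarrow$(ii): one must check that an equicontinuous factor map satisfies property (M) — which is straightforward, since $d(x,x')<\delta$ implies $\hat d(x,x')<\epsilon$ for $(x,x')\in R(\pi)$, and $D(x,x')\le \hat d(x,x')$ because each Besicovitch average is bounded by $\hat d(x,x')$ — and that $RP(\pi)=\Delta_X$ for an equicontinuous $\pi$, which is exactly the Auslander–Ellis characterization; then Proposition \ref{pro:characterizationM2}(iii) (with $RP(\pi)=BP(\pi)=\Delta_X$) yields mean equicontinuity.

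\begin{proof}
'(i)$\Rightarrow$(ii)': Assume $\pi$ is equicontinuous. Then $RP(\pi)=\Delta_X$, so in particular $P(\pi)\subseteq RP(\pi)=\Delta_X$ and $\pi$ is distal. To see that $\pi$ is mean equicontinuous, note that for $(x,x')\in R(\pi)$ and any F\o lner sequence $\mathcal{F}$ we have $D^\mathcal{F}(x,x')\leq \hat{d}(x,x')$, hence $D(x,x')\leq\hat{d}(x,x')$; thus the $\epsilon$-$\delta$ condition defining equicontinuity of $\pi$ immediately gives (M). Since also $RP(\pi)=\Delta_X\subseteq BP(\pi)\subseteq RP(\pi)$, we have $BP(\pi)=RP(\pi)$, and Proposition \ref{pro:characterizationM2} shows $\pi$ to be mean equicontinuous. \\
'(ii)$\Rightarrow$(iii)': Since $BP(\pi)\subseteq P(\pi)$, the equality $P(\pi)=\Delta_X$ forces $BP(\pi)=\Delta_X$. \\
'(iii)$\Rightarrow$(i)': Assume $\pi$ is mean equicontinuous and Banach distal, so $BP(\pi)=\Delta_X$. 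By Theorem \ref{the:MEimpliesRPisRBPisPisBPisICER} we have $RP(\pi)=BP(\pi)=\Delta_X$. As recalled in the preliminaries, a factor map with $RP(\pi)=\Delta_X$ is equicontinuous \cite[Chapter 7]{auslander1988minimal}.
\end{proof}
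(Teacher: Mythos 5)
Your proof is correct and follows essentially the same route as the paper: (i)$\Rightarrow$(ii) via $D\le\hat d$ to get (M) and the chain $\Delta_X\subseteq BP(\pi)\subseteq RP(\pi)=\Delta_X$ combined with Proposition \ref{pro:characterizationM2}, the trivial inclusion $BP(\pi)\subseteq P(\pi)$ for (ii)$\Rightarrow$(iii), and $RP(\pi)=BP(\pi)=\Delta_X$ for (iii)$\Rightarrow$(i). The only cosmetic difference is that you invoke Theorem \ref{the:MEimpliesRPisRBPisPisBPisICER} in the last step where the paper cites Proposition \ref{pro:characterizationM2} directly; these are interchangeable and introduce no circularity.
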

\begin{proof}
'(i)$\Rightarrow$(ii)': It is well known that any equicontinuous factor map is distal \cite{auslander1988minimal}. Since $D\leq \hat{d}$ any equicontinupus factor map satisfies (M). Furthermore, an equicontinuous factor map $\pi$ satisfies $RP(\pi)=\Delta_X$ \cite{auslander1988minimal}. Hence $\Delta_X\subseteq BP(\pi)\subseteq RBP(\pi)\subseteq RP(\pi)=\Delta_X$ and we observe $BP(\pi)=RP(\pi)$. Thus Proposition \ref{pro:characterizationM2} yields that $\pi$ is mean equicontinuous. \\
'(ii)$\Rightarrow$(iii)': Trivial, since any distal factor map is Banach distal. \\
'(ii)$\Rightarrow$(iii)': From Proposition \ref{pro:characterizationM2} and the Banach distality of $\pi$, we observe that $RP(\pi)=BP(\pi)=\Delta_X$. Thus $\pi$ is equicontinuous. 
\end{proof}

\subsection{Mean equicontinuity and proximality}

Since $BP(\pi)\subseteq P(\pi)$ any Banach proximal factor map is proximal. Since trivially any Banach proximal factor map is mean equicontinuous, we observe that any Banach proximal is proximal and mean equicontinuous. 	
	In the proof of \cite[Theorem 3.5]{li2015mean} it is shown (for $G=\mathbb{Z}$ and with different techniques) that for a mean equicontinuous action the proximal relation and the Banach proximal relation coincide. Thus this result can be interpreted as follows. If $Y$ is the one point system, then a factor map to $Y$ is Banach proximal, if and only if it is mean equicontinuous and proximal. Since topo-isomorphy and Banach proximality are equivalent notions (see Theorem \ref{the:topo_iso_and_Banach_prox}) we have the following generalization. 
	
\begin{theorem}\label{the:characterization_Banach_proximal_factor_maps}
	For a factor map $\pi\colon X\to Y$ the following statements are equivalent.
	\begin{itemize}
	\item[(i)] $\pi$ is topo-isomorphic.
	\item[(ii)] $\pi$ is mean equicontinuous and proximal. 
	\item[(iii)] $\pi$ is mean equicontinuous and $R(\pi)=RP(\pi)$. 
	\end{itemize}
\end{theorem}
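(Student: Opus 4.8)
The plan is to prove the cycle (i)$\Rightarrow$(ii)$\Rightarrow$(iii)$\Rightarrow$(i). The implication (i)$\Rightarrow$(ii) is essentially already recorded: by Theorem \ref{the:topo_iso_and_Banach_prox} a topo-isomorphic factor map is Banach proximal, hence $BP(\pi)=R(\pi)$; since $BP(\pi)\subseteq P(\pi)\subseteq R(\pi)$ this forces $P(\pi)=R(\pi)$, so $\pi$ is proximal, and by Corollary \ref{cor:topo_iso_is_mean_eq} it is also mean equicontinuous. For (ii)$\Rightarrow$(iii) I would use that $\pi$ mean equicontinuous implies (via Proposition \ref{pro:characterizationM2} and Theorem \ref{the:MEimpliesRPisRBPisPisBPisICER}) the chain of equalities $RP(\pi)=RBP(\pi)=P(\pi)=BP(\pi)$; combining this with proximality, $R(\pi)=P(\pi)=RP(\pi)$, which is exactly (iii).

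For (iii)$\Rightarrow$(i) the idea is: $\pi$ is mean equicontinuous, so again by Theorem \ref{the:MEimpliesRPisRBPisPisBPisICER} we have $BP(\pi)=RP(\pi)$; together with the hypothesis $R(\pi)=RP(\pi)$ this yields $BP(\pi)=R(\pi)$, i.e.\ $\pi$ is a Banach proximal factor map; now Theorem \ref{the:topo_iso_and_Banach_prox} gives that $\pi$ is topo-isomorphic. This closes the cycle.

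The only place that requires genuine input is the step identifying $BP(\pi)$ with $RP(\pi)$ for a mean equicontinuous factor map, and that is precisely the content of Theorem \ref{the:MEimpliesRPisRBPisPisBPisICER}, which is available to us; so the main obstacle has already been dispatched upstream, and what remains here is bookkeeping of inclusions. I would write the argument compactly, being careful only to invoke the correct earlier results (Theorems \ref{the:topo_iso_and_Banach_prox} and \ref{the:MEimpliesRPisRBPisPisBPisICER}, Corollary \ref{cor:topo_iso_is_mean_eq}, and Proposition \ref{pro:characterizationM2}) and to note that all the relations in play are nested as $BP(\pi)\subseteq P(\pi)\subseteq RP(\pi)\subseteq R(\pi)$, so that an equality of the two extremes collapses the whole chain.
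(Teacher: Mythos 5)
Your proposal is correct and follows essentially the same route as the paper: reduce topo-isomorphy to Banach proximality via Theorem \ref{the:topo_iso_and_Banach_prox}, use the inclusion chain $BP(\pi)\subseteq P(\pi)\subseteq RP(\pi)\subseteq R(\pi)$ for the forward implications, and invoke Theorem \ref{the:MEimpliesRPisRBPisPisBPisICER} to collapse $BP(\pi)=RP(\pi)=R(\pi)$ for (iii)$\Rightarrow$(i). The only cosmetic difference is that for (ii)$\Rightarrow$(iii) the paper gets by with the bare inclusions, whereas you also invoke Theorem \ref{the:MEimpliesRPisRBPisPisBPisICER}; both are fine.
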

\begin{proof}
	Recall from Theorem \ref{the:topo_iso_and_Banach_prox} that topo-isomorphy and Banach proximality are equivalent notions for factor maps. Recall that any Banach proximal factor map is trivially mean equicontinuous. From $BP(\pi)\subseteq P(\pi)\subseteq RP(\pi)$ we thus observe that (i)$\Rightarrow$(ii)$\Rightarrow$(iii). 
	If $\pi$ is mean equicontinuous and satisfies $R(\pi)=RP(\pi)$, then by Theorem \ref{the:MEimpliesRPisRBPisPisBPisICER} we observe $BP(\pi)=RP(\pi)=R(\pi)$ and hence $\pi$ is Banach proximal. 
\end{proof}

It is well known that the only topo-isomorphic factor maps between equicontinuous actions are conjugacies. For reference see \cite{feres2002ergodic} and \cite[Corollary 3.17]{fuhrmann2022structure}. Theorem \ref{the:characterization_Banach_proximal_factor_maps} allows to observe this rigidity also for distal actions. 
	
\begin{corollary}\label{cor:topoIsomBetweenDistalSystemsIsCOnj}
	Any topo-isomorphic factor map $\pi$ between distal actions is a conjugation. 
\end{corollary}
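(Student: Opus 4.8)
The plan is to chain together the characterization of topo-isomorphy via Banach proximality with the defining property of distal actions. First I would invoke Theorem \ref{the:topo_iso_and_Banach_prox} (or equivalently Theorem \ref{the:characterization_Banach_proximal_factor_maps}) to conclude that the topo-isomorphic factor map $\pi\colon X\to Y$ is Banach proximal, i.e.\ $BP(\pi)=R(\pi)$. Since $BP(\pi)\subseteq P(\pi)\subseteq P$, this forces $R(\pi)\subseteq P$.

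Next I would use that $X$ is distal, which by definition means $P=\Delta_X$. Combining this with the previous step gives $R(\pi)\subseteq \Delta_X$, and since the reverse inclusion $\Delta_X\subseteq R(\pi)$ always holds, we get $R(\pi)=\Delta_X$. This says precisely that $\pi$ is injective: if $\pi(x)=\pi(x')$ then $(x,x')\in R(\pi)=\Delta_X$, so $x=x'$. Finally, as recalled in the preliminaries on factor maps, an injective factor map between compact metric spaces is automatically a homeomorphism, hence a conjugacy.

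There is essentially no obstacle here — the statement is a direct corollary of the machinery already developed; the only mild point worth stating explicitly is that distality of $X$ alone (not of $Y$) suffices for the argument, since the distality of $Y$ is inherited anyway from that of $X$ through the factor map. One should also note that this is the exact parallel of the classical fact that topo-isomorphic factor maps between equicontinuous actions are conjugacies, now obtained for the strictly larger class of distal actions.
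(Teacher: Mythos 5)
Your proof is correct and follows essentially the same route as the paper: both arguments use the characterization of topo-isomorphy as (Banach) proximality to conclude $R(\pi)$ consists of proximal pairs, then use distality to force $R(\pi)=\Delta_X$, so that $\pi$ is an injective factor map and hence a conjugacy. The only cosmetic difference is that the paper phrases the second step as "any factor map between distal actions is distal" while you invoke $P=\Delta_X$ for $X$ directly; these are the same observation.
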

\begin{proof}
	By Theorem \ref{the:characterization_Banach_proximal_factor_maps} any topo-isomorphic factor map is proximal. Since any factor map between distal actions is distal, we observe $\pi$ to be a conjugacy. 
\end{proof}

\section{Decomposing mean equicontinuous factor maps}\label{sec:decomposition}

	It is well known that any mean equicontinuous system is the topo-isomorphic extension of an equicontinuous system (it's maximal equicontinuous factor) \cite{downarowicz2016isomorphic, fuhrmann2022structure, li2015mean}. This can be interpreted as the statement that any mean equicontinuous factor map onto one point is the composition of a topo-isomorphy and an equicontinuous factor map. We will see that this composition holds true for general factor maps, whenever $X$ is minimal or weak mean equicontinuous. It remains open, whether such a decomposition is possible in general.

\subsection{Decomposition for minimal actions}

\begin{theorem}\label{the:decompositionMinimal}
	Any mean equicontinuous factor map $\pi\colon X\to Y$ between minimal actions decomposes as $\pi=\psi\circ \phi$ into a topo-isomorphic factor map $\phi\colon X\to X\big/BP(\pi)$ and an equicontinuous factor map $\psi\colon X\big/BP(\pi)\to Y$.
\end{theorem}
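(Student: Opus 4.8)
The plan is to use $BP(\pi)$ as the intermediate system. By Theorem \ref{the:MEimpliesRPisRBPisPisBPisICER}, since $\pi$ is mean equicontinuous, the relation $BP(\pi)=RP(\pi)=RBP(\pi)=P(\pi)$ is a closed invariant equivalence relation on $X$, so the quotient $Z := X\big/BP(\pi)$ is a compact metric space carrying an action of $G$, and the quotient map $\phi\colon X\to Z$ is a factor map. Since $BP(\pi)\subseteq R(\pi)$, the map $\pi$ factors through $\phi$: there is a (unique, continuous) factor map $\psi\colon Z\to Y$ with $\pi=\psi\circ\phi$. The content of the theorem is then in two claims: (a) $\phi$ is topo-isomorphic, and (b) $\psi$ is equicontinuous.

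For claim (a): by Theorem \ref{the:topo_iso_and_Banach_prox} it suffices to show $\phi$ is Banach proximal, i.e.\ $BP(\phi)=R(\phi)$. Now $R(\phi)=BP(\pi)$ by construction, so I must show that every pair $(x,x')$ with $\phi(x)=\phi(x')$ — equivalently $(x,x')\in BP(\pi)$ — satisfies $D(x,x')=0$; but that is exactly the definition of $BP(\pi)$, so this claim is essentially immediate. (One should check that the Weyl pseudometric computed on $X$ controls what happens here; since $BP$ is intrinsic to $X$, there is nothing to do.) So $\phi$ is topo-isomorphic.

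For claim (b): I must show $RP(\psi)=\Delta_Z$, using the characterization of equicontinuous factor maps via $RP(\psi)$ from \cite[Chapter 7]{auslander1988minimal}. Take $(z,z')\in RP(\psi)$: there is an asymptotically proximal sequence $(z_n,z_n')$ in $R(\psi)$ converging to $(z,z')$. Lift each $z_n,z_n'$ to points $x_n,x_n'\in X$ (using compactness/surjectivity of $\phi$, passing to a convergent subsequence with limit $(x,x')$), so that $\phi(x_n)=z_n$, $\phi(x_n')=z_n'$. Then $(x_n,x_n')$ is a sequence in $R(\pi)$ (since $R(\psi)$ pulls back into $R(\pi)$) and it is asymptotically proximal: if $(g_n)$ realizes $\check d_Z(z_n,z_n')\to 0$ in $Z$, I need a sequence in $G$ realizing asymptotic proximality in $X$ — here I would use minimality of $X$ together with the fact that $\check d$ in $Z$ going to $0$ forces the lifted pairs to become proximal in $X$, or more directly argue via the limit $(x,x')$. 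Since $\pi$ satisfies (M), Proposition \ref{pro:M1characterization}(iv) upgrades "asymptotically proximal in $R(\pi)$" to "asymptotically Banach proximal", so $(x,x')\in RBP(\pi)=BP(\pi)$, i.e.\ $\phi(x)=\phi(x')$, i.e.\ $z=z'$. Hence $RP(\psi)=\Delta_Z$ and $\psi$ is equicontinuous.

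The main obstacle is the lifting step in claim (b): one must produce, from an asymptotically proximal sequence in $R(\psi)\subseteq Z^2$, an asymptotically proximal sequence in $R(\pi)\subseteq X^2$ whose image is (essentially) the original sequence. The subtlety is that asymptotic proximality of $(z_n,z_n')$ gives, for each $n$, some $g$ with $d_Z(g.z_n,g.z_n')$ small, but one needs these group elements to simultaneously make the $X$-distances of the lifts small; this is where minimality of $X$ (hence of $R(\pi)$-fibers being "uniformly recurrent") is used, presumably via a uniform version of proximality on minimal systems or by working with the convergent subsequence and the limit pair $(x,x')$ directly and showing $(x,x')\in P(\pi)$. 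I expect the argument to run: $(x,x')\in R(\pi)$, and $(x,x')$ lies in $RP(\pi)$ because $(x_n,x_n')$ can be chosen asymptotically proximal in $X$ by a minimality argument; then $RP(\pi)=BP(\pi)$ closes the loop. Everything else — that $Z$ is a compact metric $G$-space, that $\psi$ is a well-defined continuous factor map, that $\phi$ is Banach proximal — is routine.
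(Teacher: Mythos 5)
Your overall strategy is the paper's: quotient by $BP(\pi)$, which is a closed invariant equivalence relation by Theorem \ref{the:MEimpliesRPisRBPisPisBPisICER}, observe that $\phi\colon X\to X\big/BP(\pi)$ has $R(\phi)=BP(\pi)$ and is therefore Banach proximal, hence topo-isomorphic by Theorem \ref{the:topo_iso_and_Banach_prox}, and then show the induced map $\psi$ is equicontinuous. Claim (a) is correct and matches the paper essentially verbatim. The problem is claim (b): the step you yourself flag as ``the main obstacle'' --- producing from an asymptotically proximal sequence in $R(\psi)$ an asymptotically proximal sequence in $R(\pi)$ lifting it --- is never actually carried out. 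You write that minimality of $X$ ``forces the lifted pairs to become proximal in $X$'', but that is precisely the assertion that needs proof, and it is not clear what minimality of $X$ by itself contributes: smallness of $d(g_n.z_n,g_n.z_n')$ in the quotient says nothing a priori about $d(g_n.x_n,g_n.x_n')$ upstairs, since $\phi$ collapses entire Banach proximal classes. As written, claim (b) is a conditional sketch, not a proof.

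The paper closes (b) without any lifting: since the actions are minimal, \cite[Corollary 7.9]{auslander1988minimal} states that the induced map $\psi$ is equicontinuous if and only if $RP(\pi)\subseteq R(\phi)$, and Theorem \ref{the:MEimpliesRPisRBPisPisBPisICER} gives $RP(\pi)=BP(\pi)=R(\phi)$, so the proof is one line; this is exactly where minimality enters. If you want to complete your direct route instead, the missing step can be supplied from the observation that $R(\phi)=BP(\pi)\subseteq P$, i.e.\ every $\phi$-fibre consists of proximal pairs: if $\check d(x_{n_k},x_{n_k}')\geq\delta$ along a subsequence while $d(g_{n_k}.z_{n_k},g_{n_k}.z_{n_k}')\to 0$, pass to a cluster point $(u,u')$ of $(g_{n_k}.x_{n_k},g_{n_k}.x_{n_k}')$; then $\phi(u)=\phi(u')$, so $(u,u')$ is proximal, and choosing $h\in G$ with $d(h.u,h.u')<\delta/2$ and using continuity of the $h$-action contradicts $\check d(x_{n_k},x_{n_k}')\geq\delta$. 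Note that this repair uses the proximality of the $\phi$-fibres rather than minimality per se, so if you go this way you should state explicitly which hypothesis is doing the work and reconcile it with the role minimality plays in the paper's argument.
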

\begin{proof}
	In Theorem \ref{the:MEimpliesRPisRBPisPisBPisICER} we have already seen that $BP(\pi)$ is a closed invariant equivalence relation. This defines a factor map $\phi\colon X\to X\big/BP(\pi)$ with $R(\phi)=BP(\pi)$. Clearly $\phi$ is Banach proximal and by Theorem \ref{the:topo_iso_and_Banach_prox} we observe that $\phi$ is a topo-isomorphy. 
	
	Now consider the induced factor map $\psi\colon X\big/BP(\pi)\to Y$. Since the corresponding actions are assumed to be minimal we know from \cite[Corollary 7.9]{auslander1988minimal} that $\psi$ is equicontinuous, if and only if $RP(\pi)\subseteq R(\phi)$.
	From Theorem \ref{the:MEimpliesRPisRBPisPisBPisICER} we know that the mean equicontinuity of $\pi$ implies $RP(\pi)=BP(\pi)=R(\phi)$. This shows $\psi$ to be equicontinuous. 
\end{proof}

\subsection{Decomposition for weakly mean equicontinuous actions}

	Note that in the proof of Theorem \ref{the:decompositionMinimal} the minimality was only used to show that $\psi$ is equicontinuous. 
	Thus any mean equicontinuous factor map $\pi\colon X\to Y$ decomposes as $\pi=\psi\circ \phi$ with $\phi\colon X\to X\big/BP(\pi)$ topo-isomorphic and $\psi\colon X\big/BP(\pi)\to Y$. In this general context we have the following. 

\begin{proposition}\label{pro:MandBanachDistalForPSI}
	Consider a mean equicontinuous factor map $\pi\colon X\to Y$. The induced factor map $\psi\colon X\big/BP(\pi)\to Y$ satisfies (M) and is Banach distal. 
\end{proposition}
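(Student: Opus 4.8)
The plan is to reduce everything to the structure already extracted in Theorem \ref{the:MEimpliesRPisRBPisPisBPisICER}, using that $R(\phi)=BP(\pi)$ for the quotient map $\phi\colon X\to X\big/BP(\pi)$ and that $\pi=\psi\circ\phi$. Write $Z:=X\big/BP(\pi)$ and denote points of $Z$ by $[x]=\phi(x)$. The two things to check are that $\psi$ is Banach distal, i.e.\ $BP(\psi)=\Delta_Z$, and that $\psi$ satisfies (M), for which I will verify condition (iii) of Proposition \ref{pro:M1characterization}: every convergent sequence in $R(\psi)$ with limit in $\Delta_Z$ is asymptotically Banach proximal.

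For Banach distality: suppose $([x],[x'])\in BP(\psi)$, so $\psi([x])=\psi([x'])$, i.e.\ $\pi(x)=\pi(x')$, hence $(x,x')\in R(\pi)$; and $D_Z([x],[x'])=0$. Choosing a continuous metric on $Z$ of the form $d_Z([x],[x'])=d_X$-pushed-down is the subtle point — I would instead argue via Banach proximality directly: $([x],[x'])$ Banach proximal in $Z$ means, by Proposition \ref{pro:pushingDownBPandaBP} applied in reverse is not available, so I use that $\phi$ is topo-isomorphic hence Banach proximal (Theorem \ref{the:topo_iso_and_Banach_prox}), giving $(x,x')\in BP(\phi)=BP(\pi)$, and therefore $[x]=[x']$ by definition of the quotient. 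Wait — more carefully: $D_Z([x],[x'])=0$ does not immediately give $D_X(x,x')=0$. The clean route: pick any lifts $x,x'$; since $([x],[x'])\in BP\subseteq P$, the pair $([x],[x'])$ is proximal, and lifting proximality through $\phi$ is automatic only if $\phi$ is proximal-lifting, which holds because $\phi$ has Banach-proximal fibers. Concretely, $(x,x')\in R(\pi)=R(\psi\circ\phi)$ together with $D_Z([x],[x'])=0$ gives, choosing the metric $d_X$ on $X$ and its pushforward-compatible metric on $Z$ so that $D_Z\circ\hat\phi\le D_X$ fails in general — so instead I invoke: $(x,x')\in R(\pi)$, and $\pi$ mean equicontinuous gives by Theorem \ref{the:MEimpliesRPisRBPisPisBPisICER} that $RP(\pi)=BP(\pi)$; and $([x],[x'])$ Banach proximal in $Z$ forces $(x,x')\in RP(\pi)$ (since an asymptotically Banach proximal sequence in $R(\psi)$ downstairs lifts, via compactness of the fibers $R(\phi)$, to an asymptotically proximal sequence in $R(\pi)$, whence $(x,x')\in RP(\pi)$). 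Then $(x,x')\in BP(\pi)=R(\phi)$, so $[x]=[x']$. This is the step I expect to be the main obstacle: carefully lifting an asymptotically (Banach) proximal sequence downstairs to one upstairs, using compactness of $R(\phi)$ and the fact that $R(\phi)=BP(\pi)$ consists of Banach proximal pairs.

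For (M) via (iii): let $([x_n],[x_n'])$ be a convergent sequence in $R(\psi)$ with limit in $\Delta_Z$, say $[x_n]\to[x_\infty]$, $[x_n']\to[x_\infty]$. Lift: by compactness choose $x_n,x_n'\in X$ with $\phi(x_n)=[x_n]$, $\phi(x_n')=[x_n']$, passing to a subsequence so $x_n\to x$, $x_n'\to x'$; then $[x]=[x']=[x_\infty]$, so $(x,x')\in BP(\pi)$, in particular $(x,x')$ is Banach proximal in $X$. Also $\pi(x_n)=\psi([x_n])=\psi([x_n'])=\pi(x_n')$, so $(x_n,x_n')\in R(\pi)$, a convergent sequence in $R(\pi)$ with Banach proximal limit $(x,x')$; since $\pi$ is mean equicontinuous (hence satisfies (M), by the corollary after Proposition \ref{pro:M1characterization}), $(x_n,x_n')$ is asymptotically Banach proximal in $X$, i.e.\ $D_X(x_n,x_n')\to 0$. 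Finally push down: by Proposition \ref{pro:pushingDownBPandaBP} applied to $\phi$, $(\phi(x_n),\phi(x_n'))=([x_n],[x_n'])$ is asymptotically Banach proximal in $Z$. By Proposition \ref{pro:M1characterization} this establishes (M) for $\psi$, completing the proof. The only remaining care is that the argument should not depend on the choice of metrics, which is guaranteed since Banach proximality and asymptotic Banach proximality are metric-independent as noted in Section \ref{subsec:prelims_besicovitchAndWeyl}.
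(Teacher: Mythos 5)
Your treatment of (M) is essentially the paper's argument: lift the sequence through $\phi$, observe that every cluster point of the lift lands in $R(\phi)=BP(\pi)$, apply (M) for $\pi$ via Proposition \ref{pro:M1characterization}, and push back down with Proposition \ref{pro:pushingDownBPandaBP}. One small bookkeeping point: by passing to a subsequence you only conclude that a subsequence of $([x_n],[x_n'])$ is asymptotically Banach proximal; you either need the routine ``every subsequence has a further asymptotically Banach proximal subsequence'' step, or (cleaner, and what the paper does) apply the equivalence (M)$\Leftrightarrow$(i) of Proposition \ref{pro:M1characterization} to the lifted sequence directly, since \emph{all} its cluster points lie in $BP(\pi)$. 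This is easily repaired.

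The Banach distality half has a genuine gap, and ironically you abandoned the correct route. Your objection ``$D_Z([x],[x'])=0$ does not immediately give $D_X(x,x')=0$'' is exactly what Theorem \ref{the:topo_iso_and_Banach_prox} supplies: the equivalence (i)$\Leftrightarrow$(iii) there says that for a topo-isomorphic $\phi$, Banach proximality of $(\phi(x),\phi(x'))$ forces Banach proximality of $(x,x')$ for \emph{arbitrary} pairs $(x,x')\in X^2$ — not only pairs in $R(\phi)$, which is where your worry about ``$BP(\phi)$'' came from. With that one citation the proof closes in two lines: $(y,y')\in BP(\psi)$ lifts to $(x,x')\in R(\pi)$ with $(x,x')\in BP$, hence $(x,x')\in BP(\pi)=R(\phi)$ and $y=y'$. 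The substitute route you sketch instead — that an asymptotically Banach proximal sequence downstairs lifts ``via compactness of the fibers'' to an asymptotically proximal sequence upstairs, whence $(x,x')\in RP(\pi)$ — is asserted, not proved, and as a general statement about factor maps it is false: proximality does not lift through arbitrary extensions. It does hold here, but only because the limit pair $(u,u')$ of $(g_n.x,g_n.x')$ lands in $R(\phi)=BP(\pi)\subseteq P$, and one then needs the standard ``proximal pairs attract nearby pairs'' argument (choose $h$ with $d(h.u,h.u')$ small and use continuity of $h$, together with the $G$-invariance of $\check d$) to conclude $(x,x')\in P(\pi)=BP(\pi)$. You name the right ingredients but flag this as ``the main obstacle'' and never carry it out; as written, the distality half is not a proof.
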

\begin{proof}	
	Denote $\phi\colon X\to X\big/BP(\pi)$. 
	To show that $\psi$ is Banach distal consider $(y,y')\in BP(\psi)$. Choose $x,x'\in X$ such that $(\phi(x), \phi(x'))=(y,y')$. From $(y,y')\in R(\psi)$, we observe $(x,x')\in R(\pi)$.
Since $\phi$ is Banach proximal we observe that $(x,x')$ is Banach proximal from Theorem \ref{the:topo_iso_and_Banach_prox}. Thus we have that $(x,x')\in BP(\pi)=R(\pi)$ and hence $y=\phi(x)=\phi(x')=y'$. This shows $\psi$ to be Banach distal. 

	We use Proposition \ref{pro:M1characterization} to show (M) and consider a convergent sequence $(y_n,y_n')$ in $R(\psi)$ with a limit $(y,y)$ with $y\in X\big/BP(\pi)$. 
	We need to show that $(y_n,y_n')$ is asymptotically Banach proximal. 
	Let $(x_n,x_n')$ be a sequence in $X^2$ with  $(\phi(x_n),\phi(x_n'))=(y_n,y_n')$. Since $(y_n,y_n')\in R(\psi)$, we observe that $(x_n,x_n')\in R(\pi)$.	
	For any cluster point $(x,x')$ of $(x_n,x_n')$ we observe that $(\phi(x),\phi(x'))=(y,y)$ and hence $(x,x')\in R(\phi)=BP(\pi)$. Since $\pi$ is mean equicontinuous, we observe from Proposition \ref{pro:M1characterization} that 
	$(x_n,x_n')$ is asymptotically Banach proximal. 
	In particular, by Proposition \ref{pro:pushingDownBPandaBP} $(y_n,y_n')$ is asymptotically Banach proximal as the image of $(x_n,x_n')$. 
\end{proof}

\begin{theorem}\label{the:decompositionWME}
	Any mean equicontinuous factor map $\pi\colon X\to Y$ between weakly mean equicontinuous actions decomposes as $\pi=\psi\circ \phi$ into a topo-isomorphic factor map $\phi\colon X\to X\big/BP(\pi)$ and an equicontinuous factor map $\psi\colon X\big/BP(\pi)\to Y$.
\end{theorem}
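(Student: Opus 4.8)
The plan is to keep the factorization $\pi = \psi \circ \phi$ built exactly as in the proof of Theorem~\ref{the:decompositionMinimal}: by Theorem~\ref{the:MEimpliesRPisRBPisPisBPisICER} the set $BP(\pi)$ is a closed invariant equivalence relation, so the quotient map $\phi\colon X \to X\big/BP(\pi)$ is a well defined factor map with $R(\phi) = BP(\pi)$, it is Banach proximal, and hence topo-isomorphic by Theorem~\ref{the:topo_iso_and_Banach_prox}. Since $\psi\circ\phi = \pi$ on the nose, the only thing left to establish is that the induced factor map $\psi\colon X\big/BP(\pi) \to Y$ is equicontinuous; the minimality argument used for Theorem~\ref{the:decompositionMinimal} is not available here, so I would argue via asymptotic Banach proximality instead.

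First I would note that $X\big/BP(\pi)$ is weakly mean equicontinuous, being a factor of the weakly mean equicontinuous action $X$, so that both $\phi$ and $\psi$ are factor maps between weakly mean equicontinuous actions. By Proposition~\ref{pro:MandBanachDistalForPSI}, $\psi$ satisfies (M) and is Banach distal, hence $BP(\psi) = \Delta_{X/BP(\pi)}$; in view of Theorem~\ref{the:char_equicontinuity} together with Proposition~\ref{pro:characterizationM2}, it then suffices to prove $RP(\psi) = \Delta_{X/BP(\pi)}$. Using Remark~\ref{rem:RPisRBPunderM1}, the property (M) for $\psi$ gives $RP(\psi) = RBP(\psi)$, so I would fix $(z,z') \in RBP(\psi)$, choose an asymptotically Banach proximal sequence $(z_n,z_n')$ in $R(\psi)$ with $(z_n,z_n') \to (z,z')$, and reduce the theorem to showing $z = z'$.

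The heart of the argument is to lift this sequence along $\phi$ and transport asymptotic Banach proximality upstairs. Picking $x_n, x_n' \in X$ with $\phi(x_n) = z_n$ and $\phi(x_n') = z_n'$, one has $(x_n,x_n') \in R(\pi)$ because $\psi\circ\phi = \pi$ and $\psi(z_n) = \psi(z_n')$. Since $\phi$ is a topo-isomorphic factor map between weakly mean equicontinuous actions, Theorem~\ref{the:charTOPOISOMWeakMeanEqui} (implication (i)$\Rightarrow$(iii)) shows that $(x_n,x_n')$ is asymptotically Banach proximal, because its image $(z_n,z_n')$ is. Passing to a convergent subsequence $(x_{n_k},x_{n_k}') \to (x,x') \in R(\pi)$ by compactness of $X^2$, the mean equicontinuity of $\pi$ forces the limit to be Banach proximal, i.e.\ $(x,x') \in BP(\pi) = R(\phi)$, so $\phi(x) = \phi(x')$; continuity of $\phi$ then yields $z = \lim_k z_{n_k} = \lim_k \phi(x_{n_k}) = \phi(x) = \phi(x') = z'$, as required.

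I expect no serious obstacle: the whole proof is the combination of Theorem~\ref{the:charTOPOISOMWeakMeanEqui}, used to pull asymptotic Banach proximality up through the topo-isomorphic map $\phi$, with the mean equicontinuity of $\pi$, used to pin the lifted limit inside $BP(\pi)$. The only points requiring care are the (harmless) choice of lifts $x_n, x_n'$, the passage to a convergent subsequence, and the preliminary observation that a factor of a weakly mean equicontinuous action is again weakly mean equicontinuous, so that Theorem~\ref{the:charTOPOISOMWeakMeanEqui} genuinely applies to $\phi$.
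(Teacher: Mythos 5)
Your proposal is correct and follows essentially the same route as the paper: both construct $\phi$ via $BP(\pi)$ as in the minimal case, invoke Proposition~\ref{pro:MandBanachDistalForPSI} for (M) and Banach distality of $\psi$, and then lift an asymptotically Banach proximal sequence along $\phi$ using Theorem~\ref{the:charTOPOISOMWeakMeanEqui} together with the mean equicontinuity of $\pi$ to pin the cluster point in $BP(\pi)=R(\phi)$. The only (harmless) difference is packaging: you show $RBP(\psi)=\Delta$ directly, whereas the paper shows $\psi$ is mean equicontinuous and then applies Theorem~\ref{the:char_equicontinuity}.
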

\begin{proof}
	It remains to show that $\psi$ is equicontinuous. Note that we know from Proposition \ref{pro:MandBanachDistalForPSI} that $\psi$ is Banach distal. Thus by Theorem \ref{the:char_equicontinuity} it suffices to show that $\psi$ is mean equicontinuous.
	Let $(y_n,y_n')$ be a convergent sequence in $R(\psi)$ and denote $(y,y')$ for its limit. 
	Since Proposition \ref{pro:MandBanachDistalForPSI} also yields that $\psi$ satisfies (M), we observe from Proposition \ref{pro:M1characterization} that $(y_n,y_n')$ is asymptotically Banach proximal, whenever $(y,y')$ is Banach proximal. Now assume that $(y_n,y_n')$ is asymptotically Banach proximal. 
	Consider a sequence $(x_n,x_n')$ in $X^2$ with $(\phi(x_n),\phi(x_n'))=(y_n,y_n')$. 
	Since $(y_n,y_n')\in R(\psi)$, we observe that $(x_n,x_n')\in R(\pi)$.   
	Let $(x,x')$ be a cluster point of $(x_n,x_n')$ and note that by the continuity of $\phi$ we have $(\phi(x),\phi(x'))=(y,y')$. 
	Since $\phi$ is topo-isomorphic and $X$ is weakly mean equicontinuous, Theorem \ref{the:charTOPOISOMWeakMeanEqui} yields that $(x_n,x_n')$ is asymptotically Banach proximal. 
	Thus $(x,x')$ is the limit of an asymptotically Banach proximal sequence in $R(\pi)$ and the mean equicontinuity of $\pi$ yields that $(x,x')\in BP(\pi)$. 
	We thus observe the Banach proximality of $(y,y')$ from Proposition \ref{pro:pushingDownBPandaBP}. 
	This shows $\psi$ to be mean equicontinuous. 
\end{proof}

\subsection{Uniqueness of the Decomposition}

\begin{lemma}\label{lem:pushingandPullingproxanddist}
	Consider factor maps $\phi_i\colon X\to Y_i$ and $\psi_i\colon Y_i\to Z$ with $\psi_1\circ \phi_1=\psi_2\circ \phi_2$ and  $R(\phi_1)\cap R(\phi_2)=\Delta_X$. 
	If $\psi_1$ is distal, then $\phi_2$ is distal. 
\end{lemma}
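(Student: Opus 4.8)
The plan is to show that $\phi_2$ is distal by a contradiction argument, using the hypothesis $R(\phi_1)\cap R(\phi_2)=\Delta_X$ to transport a $\phi_2$-proximal pair into a $\psi_1$-proximal pair and then invoking the distality of $\psi_1$. So suppose $\phi_2$ is not distal; then there exists $(x,x')\in P(\phi_2)$ with $x\neq x'$, i.e.\ $\phi_2(x)=\phi_2(x')$ and $\check{d}(x,x')=0$. First I would check that $(x,x')\notin R(\phi_1)$: indeed if $(x,x')\in R(\phi_1)$ too, then $(x,x')\in R(\phi_1)\cap R(\phi_2)=\Delta_X$, contradicting $x\neq x'$. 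Hence $\phi_1(x)\neq\phi_1(x')$, so the pair $(\phi_1(x),\phi_1(x'))$ is a nondiagonal pair in $Y_1$.

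Next I would observe that this pair lies in $R(\psi_1)$ and is $\psi_1$-proximal. For membership in $R(\psi_1)$: since $\psi_1\circ\phi_1=\psi_2\circ\phi_2$ and $\phi_2(x)=\phi_2(x')$, we get $\psi_1(\phi_1(x))=\psi_2(\phi_2(x))=\psi_2(\phi_2(x'))=\psi_1(\phi_1(x'))$, so $(\phi_1(x),\phi_1(x'))\in R(\psi_1)$. For proximality: since $(x,x')$ is proximal in $X$, its image under the factor map $\phi_1$ is proximal in $Y_1$ (images of proximal pairs under factor maps are proximal — this is the standard fact recalled in Section \ref{sec:banachProximality}; concretely, $\check{d}_{Y_1}(\phi_1(x),\phi_1(x'))\le \check{d}(x,x')=0$ after passing to a compatible metric, or directly by choosing $g_n$ with $d(g_n.x,g_n.x')\to0$ and applying the uniform continuity of $\phi_1$). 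Therefore $(\phi_1(x),\phi_1(x'))\in P(\psi_1)$ is a nondiagonal pair, contradicting $P(\psi_1)=\Delta_{Y_1}$, i.e.\ the distality of $\psi_1$.

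I do not expect any serious obstacle here; the statement is a formal diagram chase combined with the elementary facts that factor maps push proximal pairs forward to proximal pairs and that $\psi_1$ distal means $P(\psi_1)=\Delta_{Y_1}$. The only point requiring a moment of care is that the hypothesis $R(\phi_1)\cap R(\phi_2)=\Delta_X$ is exactly what forces the nondiagonal $\phi_2$-proximal pair to have distinct $\phi_1$-images, so that its pushforward to $Y_1$ is genuinely nondiagonal and hence witnesses non-distality of $\psi_1$. Once that is in place the contradiction is immediate.
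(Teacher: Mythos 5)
Your proof is correct and is essentially the paper's argument in contrapositive form: both rest on the diagram chase showing $(\phi_1(x),\phi_1(x'))\in R(\psi_1)$, the fact that factor maps send proximal pairs to proximal pairs, and the hypothesis $R(\phi_1)\cap R(\phi_2)=\Delta_X$ to exclude the diagonal case. The paper argues directly by a case split on whether $\phi_1(x)=\phi_1(x')$, while you argue by contradiction, but the content is identical.
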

\begin{proof}
	Consider $(x,x')\in R(\phi_2)$. Since $\psi_1\circ \phi_1=\psi_2\circ\phi_2$, we observe $(x,x')\in R(\phi_2)\subseteq R(\psi_2\circ \phi_2)=R(\psi_1\circ\phi_1)$ and hence $(\phi_1(x),\phi_1(x'))\in R(\psi_1)$. 
	Since $\psi_1$ is assumed to be distal, $(\phi_1(x),\phi_1(x'))$ must be distal or trivial.
	If $\phi_1(x)=\phi_1(x')$, then $(x,x')\in R(\phi_1)\cap R(\phi_2)=\Delta_X$ and hence $x=x'$. 
	If $(\phi_1(x),\phi_1(x'))$ is distal, then also $(x,x')$ is distal, since images of proximal pairs are proximal.
	This shows that $\phi_2$ is distal. 
\end{proof}

\begin{theorem}\label{the:uniqueness_factorization}
	Consider factor maps $\phi_i\colon X\to Y_i$ 
	and $\psi_i\colon Y_i\to Z$ that satisfy $\psi_1\circ \phi_1=\psi_2\circ \phi_2$. If the $\phi_i$ are proximal and the $\psi_i$ are distal, then there exists a conjugacy $\iota\colon Y_1\to Y_2$ such that $\phi_2=\iota \circ \phi_1$ and $\psi_1=\psi_2\circ \iota$.   
\end{theorem}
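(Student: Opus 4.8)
The plan is to construct $\iota$ directly as the map identifying the fibres of $\phi_1$ with those of $\phi_2$. Concretely, I would attempt to define $\iota\colon Y_1\to Y_2$ by $\iota(\phi_1(x)):=\phi_2(x)$, and the first and central task is to show this is well defined, i.e.\ that $(x,x')\in R(\phi_1)$ forces $\phi_2(x)=\phi_2(x')$. Here the hypotheses interact: since $\phi_1$ is proximal, $(x,x')$ is a proximal pair, and as images of proximal pairs under factor maps are proximal, $(\phi_2(x),\phi_2(x'))$ is proximal. On the other hand $\psi_2(\phi_2(x))=\psi_1(\phi_1(x))=\psi_1(\phi_1(x'))=\psi_2(\phi_2(x'))$, so $(\phi_2(x),\phi_2(x'))\in R(\psi_2)$; since $\psi_2$ is distal, $P(\psi_2)=\Delta_{Y_2}$, and therefore $\phi_2(x)=\phi_2(x')$. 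By the symmetric argument, exchanging the indices and using that $\phi_2$ is proximal and $\psi_1$ is distal, I get the converse implication, so in fact $R(\phi_1)=R(\phi_2)$; in particular $\iota$ is a well-defined injection. Surjectivity of $\iota$ is immediate from surjectivity of $\phi_2$, and $\iota\circ\phi_1=\phi_2$ holds by construction.

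Next I would check that $\iota$ is a homeomorphism. Since $X$ is compact and $Y_1$ is Hausdorff, the continuous surjection $\phi_1$ is a closed map. For a closed set $C\subseteq Y_2$ we have $\phi_1^{-1}(\iota^{-1}(C))=(\iota\circ\phi_1)^{-1}(C)=\phi_2^{-1}(C)$, which is closed in $X$; applying the closed surjection $\phi_1$ and using $\phi_1(\phi_1^{-1}(A))=A$, we get that $\iota^{-1}(C)=\phi_1(\phi_2^{-1}(C))$ is closed. Hence $\iota$ is a continuous bijection between compact metric spaces and therefore a homeomorphism. Equivariance follows from $\iota(g.\phi_1(x))=\iota(\phi_1(g.x))=\phi_2(g.x)=g.\phi_2(x)=g.\iota(\phi_1(x))$ together with surjectivity of $\phi_1$, so $\iota$ is a conjugacy.

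Finally, $\phi_2=\iota\circ\phi_1$ is the defining identity, and for the remaining relation I would compute $\psi_2\circ\iota\circ\phi_1=\psi_2\circ\phi_2=\psi_1\circ\phi_1$ and cancel the surjection $\phi_1$ on the right to obtain $\psi_1=\psi_2\circ\iota$.

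I expect the well-definedness step to be the crux of the argument: it is precisely where the proximality of the $\phi_i$ and the distality of the $\psi_i$ are used, through the two elementary facts that factor maps send proximal pairs to proximal pairs and that distal factor maps have trivial $\pi$-proximal relation. The topological part (the closed-map argument) and the bookkeeping with right-cancellation of surjections are routine.
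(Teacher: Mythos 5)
Your proof is correct. It does, however, take a more direct route than the paper. The paper first forms the common refinement $X\big/\bigl(R(\phi_1)\cap R(\phi_2)\bigr)$ with induced maps $\pi_i$ to the $Y_i$, and then invokes an auxiliary lemma (Lemma \ref{lem:pushingandPullingproxanddist}) to show that each $\pi_i$ is simultaneously proximal and distal, hence a conjugacy; the desired $\iota$ is then $\pi_2\circ\pi_1^{-1}$. You instead prove outright that $R(\phi_1)=R(\phi_2)$ --- using exactly the two facts the paper also relies on, namely that factor maps send proximal pairs to proximal pairs and that a distal factor map has $P(\psi_i)=\Delta_{Y_i}$ --- and then define $\iota$ fibrewise. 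This bypasses the intermediate factor and the auxiliary lemma entirely, at the cost of having to verify by hand that the resulting bijection is continuous and equivariant (your closed-map argument and the right-cancellation of the surjection $\phi_1$ are both correct and are the standard way to do this). The paper's version is more modular, since the lemma and the refinement construction are reusable; yours is shorter and makes the underlying mechanism --- proximality upstairs against distality downstairs collapsing the fibres --- completely explicit. There is no gap in your argument.
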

\begin{proof}
Consider the invariant closed equivalence relation $R(\phi):=R(\phi_1)\cap R(\phi_2)$. There are canonical factor maps $\phi\colon X\to Y$, $\psi\colon Y\to Z$ and $\pi_i\colon Y\to Y_i$ with 
	$\phi_i=\pi_i\circ \phi$ and 
	$\psi=\psi_i\circ \pi_i$. 
Since the $\phi_i$ are proximal and $\phi_i=\pi_i\circ \phi$ it is straight forward to observe that the $\pi_i$ are proximal. 
Note that $\psi_i\circ \pi_i=\psi$ and that the $\psi_i$ are distal. To apply Lemma \ref{lem:pushingandPullingproxanddist} to these maps we need to show that $R(\pi_1)\cap R(\pi_2)=\Delta_Y$. For this let $(y,y')\in R(\pi_1)\cap R(\pi_2)$ and consider $(x,x')\in X^2$ with $(\phi(x),\phi(x'))=(y,y')$. Clearly $\phi_i(x)=\pi_i(y)=\pi_i(y')=\phi_i(x')$ and hence $(x,x')\in R(\phi_1)\cap R(\phi_2)=R(\phi)$.
	Thus $y=\phi(x)=\phi(x')=y'$. 
	This shows the $\pi_i$ to be distal. 
	
	We have shown that the maps $\pi_i$ are distal and proximal and hence conjugacies. This allows to consider the conjugacy $\iota:=\pi_2\circ \pi_1^{-1}$, which clearly satisfies $\phi_2=\iota \circ \phi_1$ and $\psi_1=\psi_2\circ \iota$. 
\end{proof}

	Recall that any equicontinuous factor map is distal. Furthermore, recall from Theorem \ref{the:characterization_Banach_proximal_factor_maps} that any topo-isomorphic factor map is proximal. 
	
\begin{corollary}
	If a factor map $\pi\colon X\to Z$ decomposes as $\pi=\psi_i\circ \phi_i$ into topo-isomorphic factor maps $\phi_i\colon X\to Y_i$ and equicontinuous factor maps $\psi_i\colon Y_i\to Z$, then there exists a conjugacy $\iota\colon Y_1\to Y_2$ such that $\phi_2=\iota \circ \phi_1$ and $\psi_1=\psi_2\circ \iota$. 
\end{corollary}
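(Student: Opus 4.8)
The plan is to deduce this immediately from Theorem~\ref{the:uniqueness_factorization}. That theorem needs four inputs: the identity $\psi_1\circ\phi_1=\psi_2\circ\phi_2$, the proximality of $\phi_1$ and $\phi_2$, and the distality of $\psi_1$ and $\psi_2$. The first is free here, since both composites equal the given map $\pi$. So the only thing to do is to check that the concrete hypotheses of the corollary (topo-isomorphy of the $\phi_i$, equicontinuity of the $\psi_i$) imply the abstract hypotheses of Theorem~\ref{the:uniqueness_factorization}.

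First I would invoke Theorem~\ref{the:characterization_Banach_proximal_factor_maps}: since each $\phi_i$ is topo-isomorphic, each $\phi_i$ is in particular proximal. Next I would use the classical fact, recalled above from \cite{auslander1988minimal}, that every equicontinuous factor map is distal; hence each $\psi_i$ is distal. With $\psi_1\circ\phi_1=\psi_2\circ\phi_2=\pi$, the $\phi_i$ proximal and the $\psi_i$ distal, all hypotheses of Theorem~\ref{the:uniqueness_factorization} are met, and it yields a conjugacy $\iota\colon Y_1\to Y_2$ with $\phi_2=\iota\circ\phi_1$ and $\psi_1=\psi_2\circ\iota$, which is exactly the assertion.

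I do not expect any real obstacle: the corollary is simply the specialization of Theorem~\ref{the:uniqueness_factorization} obtained by replacing "proximal" and "distal" with the stronger classes "topo-isomorphic" and "equicontinuous". The one remark worth including is that, together with Theorems~\ref{the:decompositionMinimal} and~\ref{the:decompositionWME}, this gives the promised uniqueness up to conjugacy of the decomposition of a mean equicontinuous factor map into a topo-isomorphy followed by an equicontinuous factor map: apply the corollary with $Z=Y$ to any two such decompositions.
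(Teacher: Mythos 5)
Your proposal is correct and is precisely the argument the paper intends: the two sentences preceding the corollary recall that equicontinuous factor maps are distal and that, by Theorem~\ref{the:characterization_Banach_proximal_factor_maps}, topo-isomorphic factor maps are proximal, after which Theorem~\ref{the:uniqueness_factorization} applies verbatim. Nothing is missing.
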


\section{Combining factor maps}\label{sec:combiningFactorMaps}

It is well known that the composition of two distal factor maps is a distal factor map and easy to see that the composition of topo-isomorphic factor maps is topo-isomorphic and that the composition of Banach distal factor maps is Banach distal. Furthermore, it is well known that the composition of equicontinuous factor maps is always distal, but does not need to be equicontinuous \cite{auslander1988minimal}.  
	Since any mean equicontinuous and distal factor map is equicontinuous by Theorem \ref{the:char_equicontinuity} we observe that the composition of mean equicontinuous factor maps does not need to be mean equicontinuous.

	In the light of the decomposition theorems above it is natural to ask what happens, if one composes topo-isomorphic factor maps with equicontinuous ones. We next show that in the context of weakly mean equicontinuous actions an equicontinuous factor map after a topo-isomorphic one always yields a mean equicontinuous factor map. Unfortunately, it remains open, whether a similar statement holds also for minimal actions. 
	
\begin{theorem}\label{the:WMEcomposingtiandEq}
	Consider factor maps $\phi\colon X\to Y$ and $\psi\colon Y\to Z$ between weakly mean equicontinuous actions. 
	If $\phi$ is topo-isomorphic and $\psi$ is mean equicontinuous, then $\pi:=\psi\circ \phi\colon X\to Z$ is mean equicontinuous. 
\end{theorem}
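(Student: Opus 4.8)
The plan is to verify the definition of mean equicontinuity for $\pi:=\psi\circ\phi$ directly: fix a convergent sequence $(x_n,x_n')$ in $R(\pi)$ with limit $(x,x')$ and show that asymptotic Banach proximality of $(x_n,x_n')$ is equivalent to Banach proximality of $(x,x')$. The two ingredients I would rely on are, on the one hand, the identification of topo-isomorphy with Banach proximality in Theorem \ref{the:topo_iso_and_Banach_prox} together with its sequential refinement for weakly mean equicontinuous actions in Theorem \ref{the:charTOPOISOMWeakMeanEqui}, and on the other hand the fact that $\phi$ pushes Banach proximal pairs and asymptotically Banach proximal sequences forward (Proposition \ref{pro:pushingDownBPandaBP}). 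Before the two implications I would record the elementary observation that, since $\psi\circ\phi=\pi$, all pairs $(\phi(x_n),\phi(x_n'))$ and $(\phi(x),\phi(x'))$ lie in $R(\psi)$, and that $(\phi(x_n),\phi(x_n'))\to(\phi(x),\phi(x'))$ by continuity of $\phi$; moreover $Y$ is weakly mean equicontinuous, being a factor of $X$, so Theorem \ref{the:charTOPOISOMWeakMeanEqui} applies to $\phi$.

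For the implication ``$(x,x')$ Banach proximal $\Rightarrow$ $(x_n,x_n')$ asymptotically Banach proximal'', I would first push the pair $(x,x')$ down through $\phi$, obtaining via Proposition \ref{pro:pushingDownBPandaBP} that $(\phi(x),\phi(x'))$ is Banach proximal. Since $\psi$ is mean equicontinuous and $(\phi(x_n),\phi(x_n'))$ is a convergent sequence in $R(\psi)$ with Banach proximal limit, it follows that $(\phi(x_n),\phi(x_n'))$ is asymptotically Banach proximal. Finally, because $\phi$ is topo-isomorphic and $X,Y$ are weakly mean equicontinuous, Theorem \ref{the:charTOPOISOMWeakMeanEqui}(iii) lets me pull this back up to conclude that $(x_n,x_n')$ is asymptotically Banach proximal.

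For the converse, ``$(x_n,x_n')$ asymptotically Banach proximal $\Rightarrow$ $(x,x')$ Banach proximal'', I would again invoke Proposition \ref{pro:pushingDownBPandaBP} to get that the image sequence $(\phi(x_n),\phi(x_n'))$ is asymptotically Banach proximal; mean equicontinuity of $\psi$ then forces its limit $(\phi(x),\phi(x'))$ to be Banach proximal; and since $\phi$ is topo-isomorphic, hence Banach proximal by Theorem \ref{the:topo_iso_and_Banach_prox}, part (iii) of that theorem yields that $(x,x')$ itself is Banach proximal. Together with the closedness of $R(\pi)$ (so that $(x,x')\in R(\pi)$), this completes the verification of the definition.

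I do not expect a genuine obstacle here; the content is entirely in arranging the cited results in the right order. The two points that require care are that the relevant statement about $\phi$ in the first implication is the \emph{sequential} assertion Theorem \ref{the:charTOPOISOMWeakMeanEqui}(iii), not merely the static Banach-proximality characterization, and that this is legitimate precisely because $Y$ inherits weak mean equicontinuity from $X$; in the second implication the static statement Theorem \ref{the:topo_iso_and_Banach_prox}(iii) suffices. If desired, one can also phrase the first implication via property (M) and Proposition \ref{pro:M1characterization}, but using the definition of mean equicontinuity of $\psi$ directly is shorter.
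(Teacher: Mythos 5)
Your proposal is correct and follows essentially the same route as the paper's proof: push the sequence and its limit down through $\phi$, apply the mean equicontinuity of $\psi$ in $R(\psi)$, and pull back up using Theorem~\ref{the:topo_iso_and_Banach_prox} for the static direction and Theorem~\ref{the:charTOPOISOMWeakMeanEqui} for the sequential one. The only cosmetic difference is that you invoke part (iii) of Theorem~\ref{the:charTOPOISOMWeakMeanEqui} where the paper could equally use part (ii) for the convergent sequence at hand; both are valid.
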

\begin{proof}
	Let $(x_n,x_n')$ be a convergent sequence in $R(\pi)$ and denote $(x,x')$ for its limit. 
	Denote $(y_n,y_n'):=(\phi(x_n),\phi(x_n'))$ and $(y,y'):=(\phi(x),\phi(x'))$ and note that $(y_n,y_n')$ is a sequence in $R(\psi)$ with limit $(y,y')\in R(\psi)$. 
	
	If $(x_n,x_n')$ is asymptotically Banach proximal, then by Proposition \ref{pro:pushingDownBPandaBP} we observe $(y_n,y_n')$ to be asymptotically Banach proximal and the mean equicontinuity of $\psi$ implies that $(y,y')\in BP(\psi)$. 
	Recall that $\phi$ is topo-isomorphic. 
	From Theorem \ref{the:topo_iso_and_Banach_prox} we observe that $(x,x')$ is Banach proximal. 
	
	If $(x,x')$ is Banach proximal, then Proposition \ref{pro:pushingDownBPandaBP} yields that $(y,y')\in BP(\psi)$. Since $\psi$ is assumed to be mean equicontinuous, we have that $(y_n,y_n')$ is asymptotically Banach proximal. Recall that $\phi$ is a topo-isomorphy and that $X$ is weakly mean equicontinuous. From Theorem \ref{the:charTOPOISOMWeakMeanEqui} we observe that 
	$(x_n,x_n')$ is asymptotically Banach proximal. 	
\end{proof}

From Theorem \ref{the:decompositionWME} we observe the following. 

\begin{corollary}
	A factor map $\pi$ between weakly mean equicontinuous actions is mean equicontinuous, if and only if it is the composition $\pi=\psi\circ \phi$ of a topo-isomorphic factor map $\phi$ and an equicontinuous factor map $\psi$. 	
\end{corollary}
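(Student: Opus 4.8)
The plan is to deduce both implications directly from the two results just established, Theorem~\ref{the:decompositionWME} and Theorem~\ref{the:WMEcomposingtiandEq}; the only extra bookkeeping is to check that every intermediate system occurring in the argument is again weakly mean equicontinuous, which is immediate from the fact (recorded in the preliminaries) that a factor of a weakly mean equicontinuous action is weakly mean equicontinuous.

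For the forward direction, suppose $\pi\colon X\to Y$ is a mean equicontinuous factor map between weakly mean equicontinuous actions. Then Theorem~\ref{the:decompositionWME} furnishes the decomposition $\pi=\psi\circ\phi$ with $\phi\colon X\to X\big/BP(\pi)$ topo-isomorphic and $\psi\colon X\big/BP(\pi)\to Y$ equicontinuous. Since $X\big/BP(\pi)$ is a factor of the weakly mean equicontinuous action $X$, it is itself weakly mean equicontinuous, so this is a decomposition of the required form.

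For the converse, suppose $\pi=\psi\circ\phi$ with $\phi\colon X\to Y$ topo-isomorphic and $\psi\colon Y\to Z$ equicontinuous, where $X$ and $Z$ are weakly mean equicontinuous by the hypothesis of the corollary. Again $Y$, being a factor of $X$, is weakly mean equicontinuous. An equicontinuous factor map is in particular mean equicontinuous (Theorem~\ref{the:char_equicontinuity}), so $\psi$ is mean equicontinuous, and applying Theorem~\ref{the:WMEcomposingtiandEq} to the pair $\phi,\psi$ yields that $\pi=\psi\circ\phi$ is mean equicontinuous. I do not expect any genuine obstacle here: the whole content of the statement has been carried by the two preceding theorems, and the only point requiring a word of care is that the intermediate system meets the weak mean equicontinuity hypothesis of those theorems, which follows from closure of the class under passing to factors.
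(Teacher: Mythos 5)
Your proof is correct and follows exactly the route the paper intends: the forward direction is Theorem~\ref{the:decompositionWME}, the converse is Theorem~\ref{the:WMEcomposingtiandEq} combined with the fact that equicontinuous factor maps are mean equicontinuous (Theorem~\ref{the:char_equicontinuity}), and your remark that the intermediate system is weakly mean equicontinuous as a factor of $X$ is precisely the small check the paper leaves implicit.
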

	
The following example shows that the composition of a topo-isomorphic factor map after an equicontinuous factor map does not need to be mean-equicontinuous, even if the actions are assumed to be minimal and uniquely ergodic and we have $G=\mathbb{Z}$. It furthermore shows that there exist factor maps that are Banach distal, but not distal. 

\begin{example}\label{exa:topo-isometricAfterEquicont}
	Details on the notions used in the following example can be found in \cite{downarowicz2005survey}. 
	Let $Z:=\mathbb{Z}_2$ be the dyadic odometer. 
	We define $f\colon Z\to \{0,1\}$ to be the characteristic function of $\bigcup_{n\in 2 \mathbb{N}_0} 2^{n+1}\mathbb{Z}_2+2^{n}$. 
	Note that $f$ has precisely one discontinuity at $0$. 
	We denote $\gamma$ for the two sided sequence given by the restriction of $f$ to $\mathbb{Z}$. 
	Since $f$ is not continuous on $\mathbb{Z}$ we have that $\gamma$ is not a Toeplitz sequence. However, since $f_{z'}:=z\mapsto f(z+z')$ is continuous for some $z'\in \mathbb{Z}_2\setminus \mathbb{Z}$ we observe that $Y:=\overline{\mathbb{Z}.\gamma}$ is a regular Toeplitz subshift. In particular, it is minimal and uniquely ergodic. Since there is only one discontinuity of $f$ we have that 
	$Z=\mathbb{Z}_2$ is the maximal equicontinuous factor of $Y$ under some factor map $\psi\colon Y\to Z$. Assuming that $\psi(\gamma)=0$, the property that $f$ has a single discontinuity implies that $\psi$ it is two-to-one on $\mathbb{Z}$, but one-to-one elsewhere. 
	Denote $\gamma'$ for the sequence given by flipping the $0^{\text{th}}$ coordinate in $\gamma$ to $1$. 
	Then $\gamma$ and $\gamma'$ are in a fibre (mapped to $0$) and all other two-to-one fibres are given by shifting $\gamma$ and $\gamma'$ accordingly.
	Note that $\psi$ is clearly a topo-isomorphy, as it is one-to-one onto the full subset $\mathbb{Z}_2\setminus \mathbb{Z}$.
	
	It is well known that the Toeplitz subshift
	 constructed above is also given by the constant length substitution $0\mapsto 01; 1\mapsto 00$. 
	Furthermore, it is folklore 
	that the Thue-Morse substitution  $0\mapsto 01; 1\mapsto 10$ yields a minimal uniquely ergodic subshift $X$ that is a two-to-one extension of $Y$ via the factor map 
	$\phi\colon X\to Y$ mapping 
	$x\in X$ to the sequence $y$ given by $y_n:=1$, if $x_{n}=x_{n+1}$ and $y_n:=0$, else. 
	Let us denote $\overline{x}$ for the element wise negation, i.e.\ $\overline{x}_n:=1-x_n$ for all $n$. Clearly for any $x\in X$ we have that $x$ and $\overline{x}$ share one fibre. Since $d(x,\overline{x})=1$, we observe $\phi$ to be an equicontinuous extension. 
	
	Now consider $\pi:=\psi\circ \phi\colon X\to Z$. 
	We observe that for $z\in \mathbb{Z}_2\setminus \mathbb{Z}$ that $\psi^{-1}(z)$ is a singleton, whose preimage $\pi^{-1}(z)$ contains a pair of the form $(x,\overline{x})$. Since $D(x,\overline{x})=1$, we have that $D$ is constantly one on all two-to-one fibres. 
	Now consider $z\in \mathbb{Z}$. Since $\mathbb{Z}$ is an orbit in $\mathbb{Z}_2$ we assume w.l.o.g\ that $z=0$. 
	We know that $\gamma$ and $\gamma'$ are contained in $\psi^{-1}(z)$. Now recall that $\gamma$ and $\gamma'$ differ at precisely in there $0^{\text{th}}$ coordinate. Thus the elements of $\pi^{-1}(z)$ must be of the form $\alpha.\beta$, $\overline{\alpha}.\beta$, $\alpha.\overline{\beta}$, and $\overline{\alpha}\overline{\beta}$ with one sided infinite sequences $\alpha$ and $\beta$ to the left and right, respectively.
	Note that $\alpha.\beta$ and $\alpha.\overline{\beta}$ are proximal and hence $\pi$ is not distal and in particular not equicontinuous. 
	Furthermore, from this description of the four-to-one fibres it is straight forward to observe that $D$ is constantly $1$ on $R(\pi)$ and in particular Banach distal. In particular, we observe from Theorem \ref{the:char_equicontinuity} that $\pi$ cannot be mean equicontinuous. 	
\end{example}

\begin{acknowledgement}
This article was funded by the Deutsche Forschungsgemeinschaft (DFG, German Research Foundation) – 530703788.
	The author would like to express sincere thanks to Henrik Kreidler for raising the question regarding the possibility of a relativized notion of mean equicontinuity. The author is grateful to Henrik Kreidler, Julian Hölz, Jochen Glück, Gabriel Fuhrmann, and Maik Gröger for their insightful discussions on the topic of mean equicontinuity, which significantly contributed to the development of this work. The author wishes to thank Henrik Kreidler for introducing him to Example \ref{exa:Dnotcontinuous}. Finally, the author wishes to thank Tobias Jäger for introducing him to the Thue-Morse example, which proved to be an invaluable resource in the course of the research. 
\end{acknowledgement}

\bibliography{Ref}

\providecommand{\MR}{\relax\ifhmode\unskip\space\fi MR }
\providecommand{\MRhref}[2]{%
  \href{http://www.ams.org/mathscinet-getitem?mr=#1}{#2}
}
\providecommand{\href}[2]{#2}
\begin{thebibliography}{10}

\bibitem{auslander1959mean}
Joseph Auslander, \textsl{Mean-{L}-stable systems}, Illinois Journal of
  Mathematics \textbf{3} (1959), 566--579 (English).

\bibitem{auslander1988minimal}
Joseph Auslander, \textsl{Minimal flows and their extensions}, North-Holland
  Mathematics Studies, vol. 153, Amsterdam etc.: North-Holland, 1988 (English).

\bibitem{auslander1980interval}
Joseph Auslander and James~A. Yorke, \textsl{Interval maps, factors of maps,
  and chaos}, T{\^o}hoku Mathematical Journal. Second Series \textbf{32}
  (1980), 177--188 (English).

\bibitem{cai2022properties}
Fangzhou Cai, Dominik Kwietniak, Jian Li, and Habibeh Pourmand, \textsl{On the
  properties of the mean orbital pseudo-metric}, Journal of Differential
  Equations \textbf{318} (2022), 1--19 (English).

\bibitem{downarowicz1991choquet}
Tomasz Downarowicz, \textsl{The {Choquet} simplex of invariant measures for
  minimal flows}, Israel Journal of Mathematics \textbf{74} (1991), no.~2-3,
  241--256 (English).

\bibitem{downarowicz2005survey}
Tomasz Downarowicz, \textsl{Survey of odometers and {Toeplitz} flows},
  Algebraic and topological dynamics. Proceedings of the conference, Bonn,
  Germany, May 1--July 31, 2004, Providence, RI: American Mathematical Society
  (AMS), 2005, pp.~7--37 (English).

\bibitem{downarowicz2011entropy}
Tomasz Downarowicz, \textsl{Entropy in dynamical systems}, New Mathematical
  Monographs, vol.~18, Cambridge: Cambridge University Press, 2011 (English).

\bibitem{downarowicz2016isomorphic}
Tomasz Downarowicz and Eli Glasner, \textsl{Isomorphic extensions and
  applications}, Topological Methods in Nonlinear Analysis \textbf{48} (2016),
  no.~1, 321--338 (English).

\bibitem{downarowicz2021all}
Tomasz Downarowicz and Benjamin Weiss, \textsl{When all points are generic for
  ergodic measures}, Bulletin of the Polish Academy of Sciences, Mathematics
  \textbf{68} (2020), no.~2, 117--132 (English).

\bibitem{ellis1978furstenberg}
Robert Ellis, \textsl{The {Furstenberg} structure theorem}, Pacific Journal of
  Mathematics \textbf{76} (1978), 345--349 (English).

\bibitem{feres2002ergodic}
Renato Feres and Anatole Katok, \textsl{Ergodic theory and dynamics of
  {{\(G\)}}-spaces (with special emphasis on rigidity phenomena)}, Handbook of
  dynamical systems. Volume 1A, Amsterdam: North-Holland, 2002, pp.~665--763
  (English).

\bibitem{fomin1951dynamical}
Sergei Fomin, \textsl{On dynamical systems with a purely point spectrum}, Dokl.
  Akad. Nauk SSSR, vol.~77, 1951, pp.~22--32 (Russian).

\bibitem{fuhrmann2023continuity}
Gabriel Fuhrmann, Maik Gr{\"o}ger and Till Hauser, \textsl{On the continuity of
  {F}\o lner averages}, Preprint, {arXiv}:2303.17191 (2023) (English).

\bibitem{fuhrmann2022structure}
Gabriel Fuhrmann, Maik Gr{\"o}ger and Daniel Lenz, \textsl{The structure of
  mean equicontinuous group actions}, Israel Journal of Mathematics
  \textbf{247} (2022), no.~1, 75--123 (English).

\bibitem{furstenberg1963structure}
Hillel Furstenberg, \textsl{The structure of distal flows}, American Journal of
  Mathematics \textbf{85} (1963), 477--515 (English).

\bibitem{garcia2017weak}
Felipe Garc{\'{\i}}a-Ramos, \textsl{Weak forms of topological and
  measure-theoretical equicontinuity: relationships with discrete spectrum and
  sequence entropy}, Ergodic Theory and Dynamical Systems \textbf{37} (2017),
  no.~4, 1211--1237 (English).

\bibitem{garcia2021mean}
Felipe Garc{\'{\i}}a-Ramos, Tobias J{\"a}ger and Xiangdong Ye, \textsl{Mean
  equicontinuity, almost automorphy and regularity}, Israel Journal of
  Mathematics \textbf{243} (2021), no.~1, 155--183 (English).

\bibitem{garcia2019mean}
Felipe Garc{\'{\i}}a-Ramos and Brian Marcus, \textsl{Mean sensitive, mean
  equicontinuous and almost periodic functions for dynamical systems}, Discrete
  and Continuous Dynamical Systems \textbf{39} (2019), no.~2, 729--746
  (English).

\bibitem{glasner2003ergodic}
Eli Glasner, \textsl{Ergodic theory via joinings}, Math. Surv. Monogr., vol.
  101, Providence, RI: American Mathematical Society (AMS), 2003 (English).

\bibitem{huang2021bounded}
Wen Huang, Jian Li, Jean-Paul Thouvenot, Leiye Xu, and Xiangdong Ye,
  \textsl{Bounded complexity, mean equicontinuity and discrete spectrum},
  Ergodic Theory and Dynamical Systems \textbf{41} (2021), no.~2, 494--533
  (English).

\bibitem{lkacka2018quasi}
Martha {\L}{{a}}cka and Marta Straszak, \textsl{Quasi-uniform convergence in
  dynamical systems generated by an amenable group action}, Journal of the
  London Mathematical Society. Second Series \textbf{98} (2018), no.~3,
  687--707 (English).

\bibitem{li2011chaos}
Jian Li, \textsl{Chaos and entropy for interval maps}, Journal of Dynamics and
  Differential Equations \textbf{23} (2011), no.~2, 333--352 (English).

\bibitem{li2014proximality}
Jian Li and Siming Tu, \textsl{On proximality with {Banach} density one},
  Journal of Mathematical Analysis and Applications \textbf{416} (2014), no.~1,
  36--51 (English).

\bibitem{li2015mean}
Jian Li, Siming Tu and Xiangdong Ye, \textsl{Mean equicontinuity and mean
  sensitivity}, Ergodic Theory and Dynamical Systems \textbf{35} (2015), no.~8,
  2587--2612 (English).

\bibitem{li2018chaotic}
Jie Li, \textsl{How chaotic is an almost mean equicontinuous system?}, Discrete
  and Continuous Dynamical Systems \textbf{38} (2018), no.~9, 4727--4744
  (English).

\bibitem{li2021mean}
Jie Li, Xiangdong Ye and Tao Yu, \textsl{Mean equicontinuity, complexity and
  applications}, Discrete and Continuous Dynamical Systems \textbf{41} (2021),
  no.~1, 359--393 (English).

\bibitem{oxtoby1952ergodic}
John~C. Oxtoby, \textsl{Ergodic sets}, Bulletin of the American Mathematical
  Society \textbf{58} (1952), 116--136 (English).

\bibitem{qiu2020note}
Jiahao Qiu and Jianjie Zhao, \textsl{A note on mean equicontinuity}, Journal of
  Dynamics and Differential Equations \textbf{32} (2020), no.~1, 101--116
  (English).

\bibitem{scarpellini1982stability}
Bruno Scarpellini, \textsl{Stability properties of flows with pure point
  spectrum}, Journal of the London Mathematical Society. Second Series
  \textbf{26} (1982), 451--464 (English).

\bibitem{walters2000introduction}
Peter Walters, \textsl{An introduction to ergodic theory}, paperback ed., Grad.
  Texts Math., vol.~79, New York, NY: Springer, 2000 (English).

\bibitem{xu2022weak}
Leiye Xu and Liqi Zheng, \textsl{Weak mean equicontinuity for a countable
  discrete amenable group action}, Journal of Dynamics and Differential
  Equations \textbf{36} (2024), no.~3, 2415--2428 (English).

\bibitem{yan2015conditional}
Kesong Yan, \textsl{Conditional entropy and fiber entropy for amenable group
  actions}, Journal of Differential Equations \textbf{259} (2015), no.~7,
  3004--3031 (English).

\bibitem{yang2024anote}
Zhongxuan Yang and Xiaojun Huang, \textsl{A note on weak mean equicontinuity
  and strong mean sensitivity}, Preprint, {arXiv}:2401.09741 (2024) (English).

\bibitem{zheng2020new}
Liqi Zheng and Zuohuan Zheng, \textsl{A new metric for statistical properties
  of long time behaviors}, Journal of Differential Equations \textbf{269}
  (2020), no.~4, 2741--2773 (English).

\end{thebibliography}
\bibliographystyle{ijmart}

\end{document}